\newcommand{\beq}{\begin{equation}}
\newcommand{\eeq}{\end{equation}}
\newcommand{\ben}{\begin{eqnarray}}
\newcommand{\een}{\end{eqnarray}}
\newcommand{\beno}{\begin{eqnarray*}}
\newcommand{\eeno}{\end{eqnarray*}}
\renewcommand{\theequation}{\thesection.\arabic{equation}}
\newtheorem{theorem}{Theorem}[section]
\newtheorem{proposition}[theorem]{Proposition}
\newtheorem{Theorem}{Theorem}[section]
\newtheorem{Definition}[Theorem]{Definition}
\newtheorem{Lemma}[Theorem]{Lemma}
\newtheorem{Corollary}[Theorem]{Corollary}
\newtheorem{Remark}[Theorem]{Remark}
\newcommand{\Id}{\mathrm{Id}}
\begin{document}

\title[Boussinesq Equation]
{H\"{o}lder Continuous Solutions Of Boussinesq Equation with compact support }

\author{Tao Tao}
\address{School of  Mathematics Sciences, Peking University, Beijing, China}
\email{taotao@amss.ac.cn}
\author{Liqun Zhang}
\address{Academy of mathematic and system science , Beijing , China}
\email{lqzhang@math.ac.cn}
%
%

\date{\today}
\maketitle

\renewcommand{\theequation}{\thesection.\arabic{equation}}
\setcounter{equation}{0}

\begin{abstract}
We show the existence of H\"{o}lder continuous solution of Boussinesq equations in whole space which has compact support both in space and time.
\end{abstract}

\noindent {\sl Keywords:} Boussinesq equations, H\"{o}lder continuous solution with compact support\

\vskip 0.2cm

\noindent {\sl AMS Subject Classification (2000):} 35Q30, 76D03 \


\setcounter{equation}{0}
\section{Introduction}
In this paper, we consider the following Boussinesq system
\begin{equation}\label{e:boussinesq equation}
\begin{cases}
v_{t}+\hbox{div}(v\otimes v)+\nabla p=\theta e_{2}, \quad (x,t)\in R^2 \times R,\\
\hbox{div}v=0,\quad  (x,t)\in R^2 \times R,\\
\theta_{t}+\hbox{div}(v\theta)=0, \quad (x,t)\in R^2 \times R.
\end{cases}
\end{equation}
Here $e_2=(0,1)^T$, $v$ is the velocity vector, $p$ is the pressure, $\theta$ is a scalar function.
The Boussinesq equations arises from many geophysical flows, such as atmospheric fronts and ocean circulations (see, for example, \cite{Ma},\cite{Pe}).
To understand the turbulence phenomena in fluid mechanics, one needs to go beyond classical solutions. The pair $(v, p, \theta)$ on $R^2\times R$ is called a weak solution of (\ref{e:boussinesq equation}) if they belong to $L^2_{loc}(R^2\times R)$ and solve (\ref{e:boussinesq equation})  in the following sense:
\begin{align}
\int_R\int_{R^2}(\partial_t\varphi\cdot v+v\otimes v: \nabla\varphi+p\hbox{div}\varphi+\theta e_2\cdot\varphi )dxdt=0,\nonumber
\end{align}
for all $\varphi\in C_c^\infty(R^2\times R;R^2).$
\begin{align}
\int_R\int_{R^2}(\partial_t\phi\theta+v\cdot\nabla\phi\theta)dxdt=0,\nonumber
\end{align}
for all $\phi\in C_c^\infty(R^2\times R;R)$ and
\begin{align}
\int_R\int_{R^2}v\cdot\nabla\psi dxdt=0.\nonumber
\end{align}
for all $\psi\in C_c^\infty(R^2\times R;R).$

The study of weak solutions in fluid dynamics attract more and more peoples interests. One of the famous problem is the Onsager conjecture on Euler equation which says that the incompressible Euler equation admits H\"{o}lder continuous weak solution which dissipates kinetic energy. More precisely, the Onsager conjecture on Euler equation can be stated as following:
\begin{enumerate}
  \item $C^{0,\alpha}$ solution are energy conservative when $\alpha>\frac{1}{3}$.\\
  \item For any $\alpha <\frac{1}{3}$, there exist dissipative solutions with $C^{0,\alpha}$ regularity .
\end{enumerate}

The part (1) has been proved by P. Constantin, E, Weinan and  E. Titi in \cite{CET} and also by P. Constantin, etc. in \cite{CPFR} with slightly weaker assumption.

The part (2) seems more subtle and has been treated by many authors. For weak solutions, the non-uniqueness results have been obtained by V. Scheffer (\cite{VS}), A. Shnirelman (\cite{ASH1,ASH2}) and Camillo De Lellis, L\'{a}szl\'{o} Sz\'{e}kelyhidi (\cite{CDL,CDL0}). In particular, a great progress in the construction of H\"{o}lder continuous solution was made by Camillo De Lellis, L\'{a}szl\'{o} Sz\'{e}kelyhidi etc in recent years. In fact, Camillo De Lellis and L\'{a}szl\'{o} Sz\'{e}kelyhidi developed an iterative scheme  in \cite{CDL2}, together with the aid of Beltrami flow on $T^3$ and Geometric Lemma, and constructed a continuous periodic solution which satisfies the prescribed kinetic energy. The solution is a superposition of infinitely many weakly interacting Beltrami flows.  Building on the iterative techniques in \cite{CDL2} and Nash-Moser mollify techniques, they constructed H\"{o}lder continuous periodic solutions with exponent $\theta<\frac{1}{10}$, which satisfies the prescribed kinetic energy in \cite{CDL3}. P. Isett  in \cite{IS1} constructed H\"{o}lder continuous periodic solutions with  any $\theta<\frac{1}{5}$, and the solution has compact support in time. By introducing some new devices in \cite{BCDL1}, Camillo De Lellis, L\'{a}szl\'{o} Sz\'{e}kelyhidi and T. Buckmaster constructed H\"{o}lder continuous weak solutions with $\theta<\frac{1}{5}$, which satisfies the prescribed kinetic energy, also see \cite{BCDLI}.
In $R^3$, P. Isett and Sung-Jin Oh in \cite{ISOH2} constructed H\"{o}lder continuous solutions  with $\theta<\frac{1}{5}$, which satisfies the prescribed kinetic energy or is a perturbation of smooth Euler flow. Recently, S. Daneri obtained dissipative H\"{o}lder solutions for the Cauchy problem of incompressible Euler flow in \cite{DA}.

Concerning the Onsager conjecture with the critical spatial regularity, namely H\"{o}lder exponent $\theta=\frac{1}{3}$, there are also some interesting results. By time localized estimates and careful choice of the parameters in \cite{TBU}, T. Buckmaster constructed H\"{o}lder continuous periodic solutions with exponent $\theta<\frac{1}{5}$ in time-space, which for almost every time belongs to $C_x^{\theta}$, for any $\theta<\frac{1}{3}$ and is compactly temporal supported. Later, by smoothing Reynolds stress for different time intervals using different approach carefully and introducing some novel ideas in \cite{BCDL2} , Camillo De Lellis, L\'{a}szl\'{o} Sz\'{e}kelyhidi and T. Buckmaster constructed  H\"{o}lder continuous periodic solution which belongs to $L^1_tC_x^\theta$, for any $\theta<\frac{1}{3}$ and has compact support in time.

Motivated by  the above earlier works, we want to know if the similar phenomena can also happen when considering the temperature effects in the incompressible Euler flow which is the Boussinesq system. In \cite{TZ}, we construct continuous solutions for Boussinesq equations on torus which satisfies the prescribed kinetic energy. In this paper, we consider the existence of H\"{o}lder continuous solution with compact support both in space and time for Boussinesq equations. The main difficulty is to deal with the interactions between velocity and temperature. Following the general framework of convex integration method developed  by De Lellis and Sz\'{e}kelyhidi for Euler equations, by establishing the corresponding geometric lemma and constructing oscillatory perturbation which are compatible with Boussinesq equations, we obtained the following results.
New, we  state our theorem:

\begin{theorem}\label{t:main 1}
For any given positive number $r,~\varepsilon\in (0,\frac{1}{28})$ , there exist a triple
\begin{align}
    (v,p,\theta)\in C_c(Q_{2r};R^{2}\times R\times R)\nonumber
\end{align}
such that they
solve the system \eqref{e:boussinesq equation} in the sense of distribution and
\begin{align}
 v\neq 0.\nonumber
\end{align}
Moreover, we have
$$v\in C^{\frac{1}{28}-\varepsilon}_{t,x},\qquad \theta\in  C^{\frac{1}{25}-\varepsilon}_{t,x},\qquad p\in C^{1-\varepsilon}_{t,x}.$$
\end{theorem}
 Here and subsequent, $Q_{r}=\{(t,x)\in R^3:|x|<r, |t|< r\}$.

\begin{Remark}
In our theorem \ref{t:main 1}, if $\theta=0$, then it is the H\"{o}lder continuous Euler flow with compact support and have been constructed by P.Isett and Sung-jin Oh in \cite{ISOH2}. In fact, they construct  H$\ddot{o}$lder continuous Euler flow with  H$\ddot{o}$lder exponent $\frac{1}{5}-\varepsilon$. Moreover, we can construct weak solution such that $\theta\neq0.$
\end{Remark}




\begin{Remark}\label{r:remark second}
Similar results also hold for the 3-dimensional Boussinesq system on $R^3$ with some H\"{o}lder exponent.
\end{Remark}

 We briefly give some comments on our proof. In \cite{ISOH2}, the authors make use of a families of Beltrami flows to control the interference terms between different waves in the construction. For the Boussinesq system, we do not know if there exist the analogous special solutions. Following an idea in \cite{NASH,ISV2}, we make use of a multi-steps iteration scheme and one-dimensional oscillation. More precisely, in each step, we add some plane waves which oscillate along the same direction with different frequency, and thus only remove one component of stress error in each step. To reduce the whole stress errors, we divide the process into several steps. On the other hand, since the velocity and temperature are coupled together in Boussinesq system, we need to reduce two stress errors simultaneously. To achieve this, we need to extend the geometric lemma in \cite{CDS} and add associated plane waves in the velocity and temperature simultaneously in each step. Their coordination is important for us to reduce the temperature stress error and construct continuous temperature.


\setcounter{equation}{0}
\section{Main proposition and outline of the proof}

As in \cite{CDL2}, the proof of Theorem \ref{t:main 1} will be achieved through an iteration procedure. In the following, $\mathcal{S}^{2\times2}$ always denotes the vector space of
symmetric $2\times2$ matrices.
\begin{Definition}\label{d:boussinesq reynold}
Assume $v,p,\theta,R,f$ are smooth and compact supported functions on $R^{2}\times R$ taking values,~respectively,~in $R^{2}, R, R, \mathcal{S}^{2\times2}, R^{2}$.
We say that they solve the Boussinesq-stress system  if
\begin{equation}\label{d:boussinesq reynold}
\begin{cases}
\partial_tv+{\rm div}(v\otimes v)+\nabla p=\theta e_{2}+{\rm div}R,\\
{\rm div}v=0,\\
\theta_{t}+{\rm div}(v\theta)={\rm div}f.
\end{cases}
\end{equation}
\end{Definition}
\subsection{Some notations on (semi)norm}
In the following, $ m=0,1,2,...$, $\beta$ is a multiindex and $D$ is spatial derivative.
First of all, we denote the supremum norm $\|f\|_0$ by
\begin{align}
 \|f\|_{0}:=\hbox{sup}_{R^{2}}|f|.\nonumber
 \end{align}
 Then the $\dot{C}^m$ seminorms are given by
 \begin{align}
 [f]_{m}:=\hbox{max}_{|\beta|=m}\|D^{\beta}f\|_{0}\nonumber
\end{align}
and $C^m$ norms are given by
\begin{align}
\|f\|_{m}:=\sum_{j=0}^{m}[f]_{j}.\nonumber
\end{align}
If $f=f_1+if_2$ is a complex-valued function, then we set $[f]_m:=[f_1]_m+[f_2]_m$ and $\|f\|_m:=\|f_1\|_m+\|f_2\|_m$.

Moreover, for functions depending on space and time, we introduce the following space-time norm:
\begin{align}
\|f\|_m:=\sup_t\|f(t,\cdot)\|_m,\quad \|f\|_{C^1_{t,x}}:=\|f\|_1+\|\partial_tf\|_0.\nonumber
\end{align}
We now state the main proposition of this paper,~of which Theorem \ref{t:main 1} is a corollary.
\begin{proposition}\label{p: iterative 1}
Let $r>0, \varepsilon>0$ be two given positive numbers. Then there exist positive constants $\eta, M$ such that the following property holds:

For any $0< \delta\leq1$,~if $(v_,~ p, ~\theta, ~R, ~f)\in C_c^{\infty}(Q_{r})$ solves Boussinesq-stress system (\ref{d:boussinesq reynold}) and
\begin{align}
     \|R\|_0\leq& \eta\delta,\label{e:reynold initial}\\ \|f\|_0\leq& \eta\delta.\label{e:reynold initial 2}
\end{align}
Set
\begin{align}
\Lambda:=\max\{1, \|R\|_{C^1_{t,x}}, \|f\|_{C^1_{t,x}}, \|v\|_{C^1_{t,x}}, \|\theta\|_{C^1_{t,x}}\}.\nonumber
\end{align}

Then, for any $\bar{\delta}\leq\frac{1}{2}\delta^{\frac{3}{2}}$, we can construct new functions $(\tilde{v},~\tilde{p},~\tilde{\theta},~\tilde{R},~\tilde{f})\in C_c^{\infty}(Q_{r+\delta})$, which also solves Boussinesq-stress system (\ref{d:boussinesq reynold}) and satisfies
\begin{align}
    \|\tilde{R}\|_0\leq&\eta\bar{\delta} ,\label{e:iterative stress estimate}\\
     \|\tilde{f}\|_0\leq& \eta\bar{\delta},\\
    \|\tilde{v}-v\|_0 \leq& M\sqrt{\delta},\\
    \|\tilde{\theta}-\theta\|_0 \leq& M\sqrt{\delta},\\
    \|\tilde{p}-p\|_0 \leq& M\delta,\label{e:iterative pressure difference estimate}
 \end{align}
    and
   \begin{align}\label{e:first derivative estimate for stress term}
   \Lambda_1:=\max\{1, \|\tilde{R}\|_{C^1_{t,x}}, \|\tilde{f}\|_{C^1_{t,x}}, \|\tilde{v}\|_{C^1_{t,x}}, \|\tilde{\theta}\|_{C^1_{t,x}}\}
\leq A\delta^{\frac{\varepsilon^2+3\varepsilon+3}{2}}\Big(\frac{\sqrt{\delta}}{\bar{\delta}}\Big)^{(1+\varepsilon)^{2}(2+\varepsilon)+(2+\varepsilon)^2}
\Lambda^{(1+\varepsilon)^{3}}.
\end{align}
Moreover,
\begin{align}\label{e:terperture derivative estimates}
\|\tilde{p}\|_{C^1_{t,x}}&\leq C_0,\quad\|\tilde{\theta}\|_{C^1_{t,x}}\leq A\delta^{\frac{2+\varepsilon}{2}}
\Big(\frac{\sqrt{\delta}}{\bar{\delta}}\Big)^{4+4\varepsilon+\varepsilon^2}
\Lambda^{(1+\varepsilon)^2
}.
\end{align}
where the constant $A$ depends on $r, \|v\|_0$ linearly and depend on $\varepsilon$.
\end{proposition}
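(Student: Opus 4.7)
The plan is to prove Proposition \ref{p: iterative 1} by a Nash-type convex integration step, built on a multi-step scheme of one-dimensional oscillatory perturbations in the spirit of \cite{NASH,ISV2}. I would first mollify $(v,\theta,R,f)$ at a length scale $\ell$ chosen as a small power of $\bar\delta/\sqrt\delta$ times a negative power of $\Lambda$, so that the mollified fields satisfy $\|D^k(\cdot)\|_0\lesssim \ell^{1-k}\Lambda$ while the mollification error in $C^0$ remains much smaller than $\eta\bar\delta$. Multiplying all amplitudes below by a smooth cutoff supported in $Q_{r+\delta}$ will then ensure that the new triple lies in $C_c^\infty(Q_{r+\delta})$.

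The next step is to decompose the mollified stresses $R_\ell$ and $f_\ell$ via an extended geometric lemma. The aim is to write the shifted tensor $\rho\,\mathrm{Id}-R_\ell$, for a scalar $\rho$ comparable to $\|R_\ell\|_0$, as a positive combination of rank-one tensors $\sum_I \gamma_I^{2}(t,x)\, k_I^{\perp}\otimes k_I^{\perp}$ over a finite set of lattice directions $k_I\in\mathbb{Z}^2$, and simultaneously to write $f_\ell=\sum_I \beta_I(t,x)\, k_I^{\perp}$ over the same directions. Extending the geometric lemma of \cite{CDS} so that both decompositions share a common family of directions is the device that allows the velocity and temperature perturbations to be coordinated: each direction $k_I$ used for the quadratic self-interaction of the velocity wave must also be available for the bilinear velocity-temperature interaction that corrects $f$.

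I would then split the iteration into three sub-steps, matching the exponent $(1+\varepsilon)^3$ on $\Lambda$ in \eqref{e:first derivative estimate for stress term}, with the temperature updated only in the first two sub-steps, which accounts for the sharper exponent $(1+\varepsilon)^2$ in \eqref{e:terperture derivative estimates}. In each sub-step $j$ I would add an incompressible velocity wave
\[
w_j(t,x) \;=\; \gamma_j(t,x)\cos(N_j\, k_j\cdot x)\, k_j^{\perp} \;+\; (\text{divergence-free corrector})
\]
and, in the first two sub-steps, a temperature wave in the form appearing in the definition of $\Theta(r,k)$,
\[
\vartheta_j(t,x) \;=\; \triangle\!\Big(b_j(t,x)\,\frac{e^{iN_j k_j\cdot x}+e^{-iN_j k_j\cdot x}}{N_j^{2}}\Big).
\]
High-frequency averaging gives $\langle w_j\otimes w_j\rangle \approx \tfrac{1}{2}\gamma_j^{2}\, k_j^{\perp}\otimes k_j^{\perp}$ and a controlled $\langle \vartheta_j\, e_2\rangle$-type contribution whose divergence matches the prescribed piece of $f_\ell$, so that after summing the sub-steps the remaining Reynolds and temperature errors are purely high-frequency. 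An inverse-divergence operator $\mathcal{R}$, whose boundedness on such high-frequency remainders is the standard stationary-phase gain of a factor $1/N_j$, then converts these into $\tilde R,\tilde f$ with $\|\tilde R\|_0, \|\tilde f\|_0\leq \eta\bar\delta$, provided the hierarchy $N_1\ll N_2\ll N_3$ is chosen so that $N_{j+1}$ dominates the $C^1$ norm of the data built up by sub-step $j$.

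The main obstacle, and the reason for the multi-step structure, is the coupling between $v$ and $\theta$. Adding a velocity wave $w_j$ creates a Reynolds error of order $w_j\otimes w_j$ and a transport error of order $v\vartheta_j+w_j\theta$, while adding a temperature wave $\vartheta_j$ feeds back through the buoyancy term $\vartheta_j e_2$ into the momentum equation. Since no analogue of Beltrami flows is available for the Boussinesq system, a single family of waves cannot cancel both stresses simultaneously, and one-dimensional oscillations along a fixed direction remove only one scalar component of $R$ (and one of $f$) per sub-step. Verifying that three sub-steps suffice, and that the frequencies $N_j$ can be tuned to make all cross-error terms acceptable, is the heart of the proof; tracking how $N_j$ depends on the running $C^1$ norm $\Lambda_{j-1}$ through a relation $\Lambda_j\sim \Lambda_{j-1}^{1+\varepsilon}$ produces the nested $(1+\varepsilon)^3$ dependence on $\Lambda$ in \eqref{e:first derivative estimate for stress term} and the $(1+\varepsilon)^2$ dependence in \eqref{e:terperture derivative estimates}, while analogous book-keeping of the ratio $\sqrt\delta/\bar\delta$, which dictates both the size $\gamma_j\sim\sqrt\delta$ of each perturbation and the growth of each frequency $N_j$, yields the precise powers of $\delta$ and $\sqrt\delta/\bar\delta$ displayed there.
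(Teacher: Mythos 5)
Your proposal matches the paper's high-level architecture: mollification at scale $\ell$, an extended geometric lemma that decomposes both $e\,\mathrm{Id}-R_\ell$ and $f_\ell$ over a common finite set of directions, a three-sub-step scheme with the temperature perturbed only in the first two sub-steps (mirroring $g_{k_3}=0$ in the paper's Geometric Lemma), one-dimensional oscillatory velocity and temperature waves, inverse-divergence operators to absorb high-frequency residue, and a parameter hierarchy that yields $\Lambda_j\sim\Lambda_{j-1}^{1+\varepsilon}$ and hence the exponents $(1+\varepsilon)^3$ and $(1+\varepsilon)^2$. These are the right ingredients.

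There is, however, a genuine gap in the treatment of the transport error. With the ansatz $w_j=\gamma_j(t,x)\cos(N_jk_j\cdot x)k_j^\perp+\cdots$ (static phase), the term $v\cdot\nabla w_j$ contains $-N_j(v\cdot k_j)\gamma_j\sin(N_jk_j\cdot x)k_j^\perp$, which has amplitude of order $N_j\sqrt\delta$. Applying $\mathcal{R}$ gains only a factor $N_j^{-1}$, leaving a Reynolds-stress contribution of size $\sqrt\delta$, which does not beat the target $\eta\bar\delta\ll\delta$; the iteration cannot close. The paper circumvents this with a space-time partition of unity $\alpha_l(\mu_nt,\mu_nx)$ combined with Lagrangian phases $e^{i\lambda_n2^{[l]}k_n^\perp\cdot(x-v_{0(n-1)}(l/\mu_n)t)}$: on each patch the operator $\partial_t+v_{0(n-1)}(l/\mu_n)\cdot\nabla$ annihilates the phase, the residual advective mismatch $v_{0(n-1)}-v_{0(n-1)}(l/\mu_n)$ is of size $\Lambda/\mu_n$ and is controlled by taking $\mu_n$ large, and the weights $2^{[l]}$ prevent interference between nearby patches. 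If you intend to import this machinery implicitly from \cite{ISV2}, you must say so and verify its compatibility with the simultaneous cancellation of $f_\ell$ through the cross-term $\langle w_j\chi_j\rangle$. A related imprecision: it is that cross-term, not a contribution of the type $\langle\vartheta_je_2\rangle$, that cancels the prescribed piece of $f_\ell$; the buoyancy term $\vartheta_je_2$ has zero fast-oscillation average, enters only the momentum equation, and is estimated (after applying $\mathcal{R}$) as part of the small Reynolds error rather than used for cancellation.
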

We will prove Proposition \ref{p: iterative 1} in the subsequent sections.

\subsection{Outline of the proof of Proposition \ref{p: iterative 1}}

\indent

The rest of this paper will be dedicated to prove Proposition \ref{p: iterative 1}. The construction of the functions $\tilde{v}, \tilde{\theta}$ consists of a stage which contains three steps. In the first step, we add perturbations to $v_0, \theta_0$  and get new functions $v_{01}, \theta_{01}$ as following:
 \begin{align}
 v_{01}=&v_0+w_{1o}+w_{1c}:=v_0+w_1,\nonumber \\
 \theta_{01}=&\theta_0+\chi_{1o}+\chi_{1c}:=\theta_0+\chi_{1},\nonumber
 \end{align}
where $w_{1o}, w_{1c}, \chi_{1o}, \chi_{1c}$ are highly oscillatory functions with compact support given by explicit formulas. We introduce three parameters $\ell,~~\mu_1,~\lambda_1$ in the construction of perturbation with
\begin{align}
 \qquad 1\ll\mu_1\ll\lambda_1.\nonumber
 \end{align}
After adding these perturbations, we mainly focus on finding functions $R_{01}, p_{01}$ and $f_{01}$ with the desired estimates and solving system (\ref{d:boussinesq reynold}).
 After the first step, the stresses error become smaller in the following sense:

 If
\begin{align}
R_0(t,x)-e(t,x)Id=-\sum\limits_{i=1}^{3}a^2_i(t,x)k_i\otimes k_i,\quad
f_0(t,x)=-\sum\limits_{i=1}^{2}c_i(t,x)k_i,\nonumber
\end{align}
where $e(t,x)$ is a smooth function with compact support, see (\ref{d:definition on p}) and (\ref{d:difinition on e}).
Then
\begin{align}
R_{01}(t,x)=-\sum\limits_{i=2}^{3}a^2_i(t,x)k_i\otimes k_i+\delta R_{01},\quad
f_{01}(t,x)=-c_2(t,x)k_2+\delta f_{01},\nonumber
\end{align}
where $\delta R_{01}, \delta f_0$ can be arbitrary small by the appropriate choice of $\ell, \mu_1, \lambda_1$.

We repeat the above step, till obtain the needed $(\tilde{v},~\tilde{p},~\tilde{\theta},~\tilde{R},~\tilde{f}).$\\

The rest of paper is organized as follows. In section 3, we prove Geometric Lemma and introduce two operators. After these preliminaries, we perform the first step in next three sections. In section 4, we introduce the perturbations $w_{1o}, w_{1c}, \chi_{1o}, \chi_{1c}$ and new stresses $R_{01}, f_{01}$ and prescribe the constant $\eta,~M$ appeared in Proposition 2.1. In sections 5 and 6, we
calculate the main forms of $R_{01}, f_{01}$ and prove the relevant estimates of the various terms involved in the construction, in  term of the parameters
$\lambda_1, \mu_1, \ell$. In sections 7, 8 and 9 we construct $v_{0n}, p_{0n}, \theta_{0n}, R_{0n}, f_{0n}$ for $n=2,3$ and prove relevant estimates by inductions. The construction given in section 7 is similar to that of the first step in section 4. We calculate the main forms of $R_{0n}, f_{0n}$ in section 8 and prove the various error estimates in section 9. After completing the constructions of $(v_{03}, p_{03}, \theta_{03}, R_{03}, f_{03})$ and various estimates, we give a proof of Proposition 2.1 by choosing appropriate parameters $\ell, \mu_n, \lambda_n$ for $1\leq n\leq 3$ in section 10. Finally, in section 11, we give a proof of Theorem \ref{t:main 1}.

\setcounter{equation}{0}
\section{Preliminaries}

\indent

We always make use of the following notations:
$R^{2\times 2}$ denotes the space of $2\times 2$ matrices; $\mathcal{S}^{2\times 2}$, as before,
 denotes the spaces of $2\times 2$ symmetric matrices
and $\Id$ denotes $2\times 2$ identity matrix .
The matrix norm $|R|:=\max_{1\leq i,j\leq 2}|R_{ij}|$, if $R=(R_{ij})_{2\times 2}$.

\subsection{Geometric Lemma}

\indent

The following lemma is a kind of geometric lemma given in \cite{CDS} to our case. Within it, we represent not only a prescribed symmetric matrix $R$, but also a prescribed vector.

\begin{Lemma}[Geometric Lemma]\label{p:split}
There exist $r_{0}>0$, $k_1, k_2, k_3\in R^2\setminus\{0\}$, smooth positive functions
 \begin{align*}
    \gamma_{k_i}\in C^{\infty}(B_{r_{0}}(Id)), \quad \frac{1}{2}\leq \gamma_{k_i}\leq\frac{3}{2},~~~~  i=1,2,3
\end{align*}
and linear functions
 \begin{align*}
    g_{k_i}\in C^{\infty}(R^{2}), i=1,2
 \end{align*}
such that
\begin{enumerate}
 \item for every $R \in B_{r_{0}}(Id)$, we have
  $$R=\sum\limits_{i=1}^3\gamma^2_{k_i}(R)k_i\otimes k_i;$$
   \item for every $f\in R^{2}$,~we have
   $$f=\sum_{i=1}^2g_{k_i}(f)k_i.$$
\end{enumerate}

\end{Lemma}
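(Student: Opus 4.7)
\noindent The plan is to treat the two parts of the lemma separately, since part $(2)$ is elementary while part $(1)$ carries the whole content. For $(2)$, I would simply fix two linearly independent vectors in $Z^2\setminus\{0\}$ (for instance two of the three vectors produced in part $(1)$), observe that they form a basis of $R^2$, and take $g_{k_1}, g_{k_2}$ to be the corresponding dual coordinate functions; these are linear and hence trivially $C^\infty$.

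For part $(1)$, the strategy rests on a soft linear-algebra fact. The space $\mathcal{S}^{2\times 2}$ is three-dimensional, so it suffices to exhibit three vectors $k_1,k_2,k_3\in Z^2\setminus\{0\}$ such that (a) the rank-one symmetric matrices $k_i\otimes k_i$ are linearly independent in $\mathcal{S}^{2\times 2}$, and (b) $Id$ admits a decomposition
\[
Id=\sum_{i=1}^3\alpha_i^{0}\,k_i\otimes k_i\quad\text{with}\quad \alpha_i^{0}>0\ \text{for } i=1,2,3.
\]
Condition (a) guarantees that the linear map $\Phi:R^3\to\mathcal{S}^{2\times 2}$ defined by $(\alpha_1,\alpha_2,\alpha_3)\mapsto\sum_i\alpha_i k_i\otimes k_i$ is a bijection; its inverse produces linear (hence $C^\infty$) coefficient functions $\alpha_i:\mathcal{S}^{2\times 2}\to R$ with $R=\sum_i\alpha_i(R)k_i\otimes k_i$. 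Condition (b) ensures that each $\alpha_i(Id)>0$, so by continuity there is $r_0>0$ with $\alpha_i(R)>0$ on $B_{r_0}(Id)$; setting $\gamma_{k_i}(R):=\sqrt{\alpha_i(R)}$ then yields smooth positive functions on that ball, and the prescribed bounds $\tfrac12\le\gamma_{k_i}\le\tfrac32$ can be imposed by shrinking $r_0$, after first choosing the vectors so that $\alpha_i^{0}\in[1/4,9/4]$.

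As a concrete example realizing (a) and the strict positivity in (b), I would take
\[
k_1=(1,2),\qquad k_2=(2,1),\qquad k_3=(1,-1),
\]
for which a direct computation checks that the matrices $k_i\otimes k_i$ are linearly independent (the $3\times3$ determinant of their coordinates in $(R_{11},R_{22},R_{12})$ is nonzero) and that
\[
Id=\tfrac19\, k_1\otimes k_1+\tfrac19\, k_2\otimes k_2+\tfrac49\, k_3\otimes k_3.
\]
Geometrically, this reflects the fact that the three directions $k_i/|k_i|$ are sufficiently spread in $[0,\pi)$ for $Id$ to lie in the relative interior of the cone spanned by the associated rank-one positive-semidefinite matrices; if one needs the normalization $[1/2,3/2]$ literally, one rescales or substitutes to push each $\alpha_i^{0}$ into $[1/4,9/4]$ before invoking continuity.

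The main obstacle in the argument is exactly this interior-positivity condition: with only three vectors permitted (forced by $\dim\mathcal{S}^{2\times 2}=3$), any configuration whose directions cluster in an open half-plane must force at least one $\alpha_i^{0}$ to vanish or turn negative, and the integrality constraint $k_i\in Z^2$ excludes many naive choices such as equispaced directions on the unit circle. Once an integer triple with strictly positive coefficients and the correct normalization is found, smoothness, positivity, and the upper and lower bounds on $B_{r_0}(Id)$ all follow immediately from the linearity of $\Phi^{-1}$ and continuity, and part $(2)$ is simultaneously settled by selecting two linearly independent vectors from the same triple.
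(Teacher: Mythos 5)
Your approach is, in substance, the same as the paper's: part (2) via dual coordinates of a linearly independent pair, and part (1) via the linear‐algebra observation that three rank-one symmetric matrices $k_i\otimes k_i$ forming a basis of $\mathcal{S}^{2\times 2}$, with $Id$ in the open positive cone they span, yield linear (hence smooth) coefficient functions $\alpha_i(R)$, positive on a small ball around $Id$, and one then sets $\gamma_{k_i}=\sqrt{\alpha_i}$. The paper implements exactly this but with one extra normalization you drop: it chooses $k_1=(\tfrac{1}{\sqrt2},\tfrac{1}{\sqrt3})^T$, $k_2=(-\tfrac12,\tfrac{2}{\sqrt6})^T$, $k_3=(\tfrac12,0)^T$, for which $\sum_i k_i\otimes k_i=Id$ holds \emph{exactly}, so $\gamma_{k_i}(Id)=1$ for all $i$ and the bounds $\tfrac12\le\gamma_{k_i}\le\tfrac32$ follow from continuity alone on a small enough ball $B_{r_0}(Id)$. (As an aside, those $k_i$ are not in $Z^2$, so the paper's own proof does not literally satisfy the $Z^2$ requirement that appears in the statement; the integrality plays no role downstream.)

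The genuine gap in your write-up is precisely the normalization you left unfinished. Your concrete triple $k_1=(1,2)$, $k_2=(2,1)$, $k_3=(1,-1)$ does give $Id=\tfrac19k_1\otimes k_1+\tfrac19k_2\otimes k_2+\tfrac49k_3\otimes k_3$, but then $\gamma_{k_1}(Id)=\gamma_{k_2}(Id)=\tfrac13$, which already violates the stated lower bound $\tfrac12$ at $R=Id$; no amount of shrinking $r_0$ repairs this. You acknowledge this and gesture at ``rescale or substitute,'' but rescaling $k_i\mapsto c_ik_i$ changes $\alpha_i^0\mapsto\alpha_i^0/c_i^2$ and the required $c_i$ (here $c_1=c_2=\tfrac13$, $c_3=\tfrac23$) takes you out of $Z^2$; and you do not produce a substitute integer triple with all $\alpha_i^0\in(\tfrac14,\tfrac94)$. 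So as written the proof does not deliver the constants in the lemma. The simplest repair is to do exactly what the paper does: abandon the integrality of $k_i$ (which the paper's own proof already does) and pick the $k_i$ so that $\sum_ik_i\otimes k_i=Id$, whence $\gamma_{k_i}(Id)=1$ and the bounds are automatic. Alternatively, one could note that the specific bounds $[\tfrac12,\tfrac32]$ are not structurally important — all that matters downstream is that the $\gamma_{k_i}$ are bounded above and uniformly away from zero on $B_{r_0}(Id)$ — and relax the statement accordingly; but that changes the lemma, and with the statement as given the step is missing.
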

\begin{proof}
The proof is constructive. We set
\begin{align}\label{d:definition of ki}
k_1:=\Big(\frac{1}{\sqrt{2}},\frac{1}{\sqrt{3}}\Big)^T,\quad k_2:=\Big(-\frac{1}{2},\frac{2}{\sqrt{6}}\Big)^T,\quad k_3:=\Big(\frac{1}{2},0\Big)^T.
\end{align}
A straightforward computation gives
\begin{align}
k_1\otimes k_1=\left(\begin{array}{cc}
\frac{1}{2} & \frac{1}{\sqrt{6}}\\
\frac{1}{\sqrt{6}} & \frac{1}{3}
\end{array}\right),\qquad
k_2\otimes k_2=
\left(\begin{array}{cc}
\frac{1}{4} & -\frac{1}{\sqrt{6}}\\
-\frac{1}{\sqrt{6}} & \frac{2}{3}
\end{array}\right),\qquad
k_3\otimes k_3=
\left(\begin{array}{cc}
\frac{1}{4} & 0\\
0 & 0
\end{array}\right).\nonumber
\end{align}
It's obvious that $k_1\otimes k_1, k_2\otimes k_2, k_3\otimes k_3$ are linearly independent, hence form a basic for $\mathcal{S}^{2\times2}$ and
$$\sum_{i=1}^3k_i\otimes k_i=Id.$$
Thus, taking $r_0> 0$ small, for any symmetric matrix $R\in B_{r_0}(Id)$ , the following equation
$$\sum_{i=1}^3\gamma^2_{k_i}k_i\otimes k_i=R$$
has unique, positive solution $\gamma_{k_i}$  and
$$\|\gamma_{k_i}-1\|_0\leq \frac{1}{2}.$$
Since the representation is unique, the dependence of $\gamma_{k_i}$ on $R$ is smooth.\\
Then,  for any $f\in R^{2}$, we set
  \[
  \left(\begin{array}{cc}
  g_{k_1}(f)\\[12pt]
  g_{k_2}(f)
  \end{array}\right)
  :=(k_1, k_2)^{-1}f,\]
where $(k_1, k_2)^{-1}$ denote the inverse matrix of $(k_1, k_2).$
Thus, $g_{k_1}, g_{k_2}$ are linear functions and it's obvious that
 \begin{align*}
f=\sum_{i=1}^2g_{k_i}(f)k_i, \quad \forall f\in R^2.
  \end{align*}
Thus, we finished the proof of this lemma.
\end{proof}

As in \cite{ISOH2}, we introduce the following operators
in order to deal with the stresses.

\subsection{The operator $\mathcal{R}$ }

\indent

Suppose that the vector function $U(x)=(U_1(x),U_2(x))^T\in C_c^{\infty}(B_r; C^2)$  satisfy
\begin{align}\label{v:vanish 1}
\int_{R^2}U_i(x)dx=0,\quad \int_{R^2}(x_iU_j-x_jU_i)(x)dx=0,\quad i,j=1,2
\end{align} and take the following form
\begin{align}
U(x)=\left(\begin{array}{c}
V_1(x)e^{i\lambda k\cdot x}\\
V_2(x)e^{i\lambda k\cdot x}
\end{array}\right):= V(x)e^{i\lambda k\cdot x},\nonumber
\end{align}
where $\lambda>0,\quad k=(k_1, k_2)\in R^2\setminus\{0\}$.\\
We denote $\mathcal{R}U_o(x)$ by
\begin{align}
\mathcal{R}U_o(x):=\left(\begin{array}{cc}
\frac{M_{11}(x)}{i\lambda}e^{i\lambda k\cdot x}     &   \frac{M_{12}(x)}{i\lambda}e^{i\lambda k\cdot x}\\
\frac{M_{12}(x)}{i\lambda}e^{i\lambda k\cdot x}     &   \frac{M_{22}(x)}{i\lambda}e^{i\lambda k\cdot x}
\end{array}\right),\nonumber
\end{align}
where $M=(M_{11}, M_{12}, M_{22})$ satisfy
\begin{align}\label{e:linear equation}
M_{11}k_1+M_{12}k_2=V_1, \quad M_{12}k_1+M_{22}k_2=V_2.
\end{align}
Obviously, the linear equation (\ref{e:linear equation}) always have a solution
\begin{align}\label{p:property on M}
(M_{11}(x),~M_{12}(x),~M_{22}(x))\in C_c^{\infty}(B_{r},C^3),\quad \|M_{ij}\|_0\leq C_0(k)\|V\|_0.
\end{align}
Here and subsequent in this section, $C_0$ denotes a absolute constant and $C_0(k)$ is a constant depending on $k$.
In fact, we may choose $M_{12}(x)=0$ first, if both $k_1$ and $k_2 $ are not zero, then (\ref{e:linear equation}) gives $(M_{11}, M_{22})$ and they satisfy (\ref{p:property on M}). In the case one of  $k_1$, $k_2$ is zero, for example $k_1=0$, (\ref{e:linear equation}) gives $(M_{12}, M_{22})$ and we set $M_{11}=0$. They also satisfy (\ref{p:property on M}).
It's direct to obtain
\begin{align}
{\rm div}\mathcal{R}U_o(x)=&\left(\begin{array}{c}
V_1(x)e^{i\lambda k\cdot x}\\
V_2(x)e^{i\lambda k\cdot x}
\end{array}\right)
+\left(\begin{array}{c}
\frac{\partial_1M_{11}(x)+\partial_2M_{12}(x)}{i\lambda}e^{i\lambda k\cdot x}  \\
\frac{\partial_1M_{12}(x)+\partial_2M_{22}(x)}{i\lambda}e^{i\lambda k\cdot x}
\end{array}\right)\nonumber
\end{align}
and
\beno
\|\mathcal{R}U_o\|_0\leq C_0(k)\frac{\|V\|_0}{\lambda} ,\quad \|\nabla\mathcal{R}U_o\|_0\leq  C_0(k)\Big(\|V\|_0+\frac{\|V\|_1}{\lambda}\Big).
\eeno
Repeat the above process, there exists $(N_{11}(x),~N_{12}(x),~N_{22}(x))\in C_c^{\infty}(B_{r},C^3)$ such that if we set
\begin{align}
\mathcal{R}U_{c1}(x):=\left(\begin{array}{cc}
\frac{N_{11}(x)}{(i\lambda)^2}e^{i\lambda k\cdot x}     &   \frac{N_{12}(x)}{(i\lambda)^2}e^{i\lambda k\cdot x}\\
\frac{N_{12}(x)}{(i\lambda)^2}e^{i\lambda k\cdot x}     &   \frac{N_{22}(x)}{(i\lambda)^2}e^{i\lambda k\cdot x}
\end{array}\right),\nonumber
\end{align}
then
\begin{align}
{\rm div}\mathcal{R}U_{c1}(x)=
-\left(\begin{array}{c}
\frac{\partial_1M_{11}(x)+\partial_2M_{12}(x)}{i\lambda}e^{i\lambda k\cdot x}  \\
\frac{\partial_1M_{12}(x)+\partial_2M_{22}(x)}{i\lambda}e^{i\lambda k\cdot x}
\end{array}\right)+
\left(\begin{array}{c}
\frac{\partial_1N_{11}(x)+\partial_2N_{12}(x)}{(i\lambda)^2}e^{i\lambda k\cdot x}  \\
\frac{\partial_1N_{12}(x)+\partial_2N_{22}(x)}{(i\lambda)^2}e^{i\lambda k\cdot x}
\end{array}\right)\nonumber
\end{align}
and
\begin{align}
\|N_{ij}\|_0\leq  C_0(k)\|\nabla M\|_0\leq  C_0(k)\|\nabla V\|_0,\quad \|N_{ij}\|_1\leq   C_0(k)\|\nabla^2 V\|_0.\nonumber
\end{align}
Thus, there holds
\beno
\|\mathcal{R}U_{c1}\|_0\leq  C_0(k)\frac{\|V\|_1}{\lambda^2} ,\quad \|\nabla\mathcal{R}U_{c1}\|_0\leq C_0(k)\Big(\frac{\|V\|_1}{\lambda}+\frac{\|V\|_2}{\lambda^2}\Big).
\eeno
A straightforward computation gives
\begin{align}
{\rm div}\Big(\mathcal{R}U_o(x)+\mathcal{R}U_{c1}(x)\Big)&=\left(\begin{array}{c}
V_1(x)e^{i\lambda k\cdot x}\\
V_2(x)e^{i\lambda k\cdot x}
\end{array}\right)
+\left(\begin{array}{c}
\frac{\partial_1N_{11}(x)+\partial_2N_{12}(x)}{(i\lambda)^2}e^{i\lambda k\cdot x}  \\
\frac{\partial_1N_{12}(x)+\partial_2N_{22}(x)}{(i\lambda)^2}e^{i\lambda k\cdot x}
\end{array}\right)
.\nonumber
\end{align}
Performing the above process, for any integer $m\geq 2$, we have symmetric matrix functions $\mathcal{R}U_{ci}\in C_c^{\infty}(B_r): i=1,2,\cdot\cdot\cdot,m-1$ such that
\begin{align}
\|\mathcal{R}U_{ci}\|_0\leq  C_0(k)\frac{\|V\|_i}{\lambda^{i+1}},\quad \|\nabla\mathcal{R}U_{ci}\|_0\leq C_0(k)\Big(\frac{\|V\|_i}{\lambda^i}+\frac{\|V\|_{i+1}}{\lambda^{i+1}}\Big)\nonumber
\end{align}
and
\begin{align}
{\rm div}\Big(\mathcal{R}U_o(x)+\sum_{i=1}^{m-1}\mathcal{R}U_{ci}(x)\Big)&=\left(\begin{array}{c}
V_1(x)e^{i\lambda k\cdot x}\\
V_2(x)e^{i\lambda k\cdot x}
\end{array}\right)
+\left(\begin{array}{c}
\frac{R_1}{(i\lambda)^m}e^{i\lambda k\cdot x}  \\
\frac{R_2}{(i\lambda)^m}e^{i\lambda k\cdot x}
\end{array}\right)
\nonumber
\end{align}
with
$$\|R_1\|_0+\|R_2\|_0\leq  C_0(k)\|V\|_m,\quad \|\nabla R_1\|_0+\|\nabla R_2\|_0\leq  C_0(k)\|V\|_{m+1}.$$
Since $\mathcal{R}U_o(x)+\sum\limits_{i=1}^{m-1}\mathcal{R}U_{ci}(x)\in C_c^{\infty}(B_r)$ is a symmetric matrix, then
\begin{align}\label{v:vanish 2}
\int_{R^2}{\rm div}\Big(\mathcal{R}U_o(x)+\sum_{i=1}^{m-1}\mathcal{R}U_{ci}(x)\Big)=0, \quad \int_{R^2}(x_iH_j-x_jH_i)(x)dx=0,\quad i,j=1,2.
\end{align}
Here we used the notaion ${\rm div}\Big(\mathcal{R}U_o(x)+\sum\limits_{i=1}^{m-1}\mathcal{R}U_{ci}(x)\Big)=(H_1,H_2)^T$.\\
By (\ref{v:vanish 1}) and (\ref{v:vanish 2}), if we set
$$(K_1, K_2)^T:=\Big(\frac{R_1}{(i\lambda)^m}e^{i\lambda k\cdot x}, \frac{R_2}{(i\lambda)^m}e^{i\lambda k\cdot x}\Big)^T,$$
we also have
\begin{align}
K_i\in C_c^{\infty}(B_{r};C),\quad \int_{R^2}K_i(x)dx=0, \quad \int_{R^2}(x_iK_j-x_jK_i)(x)dx=0,\quad i,j=1,2.\nonumber
\end{align}
Following the argument of Section 10 about solving the symmetric divergence eqaution in \cite{ISOH2}, we know that there exists a symmetric matric function $\delta\mathcal{R}[U]\in C_c^{\infty}(B_{r})$ such that
\begin{align}\label{e:estimate on high error}
&{\rm div}\delta\mathcal{R}[U]=-(K_1,K_2)^T,\quad \|\delta\mathcal{R}[U]\|_0\leq C_0(r,k)\frac{\|V\|_m}{\lambda^m},\nonumber\\
&\|\nabla\delta\mathcal{R}[U]\|_0\leq C_0(r,k)\Big(\frac{\|V\|_{m}}{\lambda^{m-1}}+\frac{\|V\|_{m+1}}{\lambda^m}\Big).
\end{align}
Here and subsequent, $C_0(r)$ denote a constant depend on $r$ linearly: $C_0(r)=C_0r+C_0$.\\
In fact, following \cite{ISOH2}, we take a function $\zeta(y)\in C_c^{\infty}(B_r(0))$ such that
\beno
\|\nabla^\beta \zeta\|_0\leq C_\beta r^{-2-|\beta|},\quad \forall |\beta|\geq 0.
\eeno
Then,  define the solution operator $\delta\mathcal{R}[U]$ as following: Let $(\delta\mathcal{R}[U])^{jl}$ denote the $(j, l)$ element of $\delta\mathcal{R}[U]$ and
\beno
(\delta\mathcal{R}[U])^{jl}:=\mathcal{R}_0^{jl}[U]+\mathcal{R}_1^{jl}[U]+\mathcal{R}_2^{jl}[U],
\eeno
where
\beno
\mathcal{R}_0^{jl}[U]&=&-\int_0^1\int_{B_r(0)}\zeta(y)\frac{(x-y)^j}{\sigma}U^l(\frac{x-y}{\sigma}+y)\frac{dy}{\sigma^2}d\sigma\\
&&-\int_0^1\int_{B_r(0)}\zeta(y)\frac{(x-y)^l}{\sigma}U^j(\frac{x-y}{\sigma}+y)\frac{dy}{\sigma^2}d\sigma,\\
\mathcal{R}_1^{jl}[U]&=&\frac12\int_0^1\int_{B_r(0)}(\partial_p\zeta)(y)\frac{(x-y)^j(x-y)^p}{\sigma^2}U^l(\frac{x-y}{\sigma}+y)\frac{dy}{\sigma^2}d\sigma\\
&&+\frac12\int_0^1\int_{B_r(0)}(\partial_p\zeta)(y)\frac{(x-y)^l(x-y)^p}{\sigma^2}U^j(\frac{x-y}{\sigma}+y)\frac{dy}{\sigma^2}d\sigma,\\
\mathcal{R}_2^{jl}[U]&=&-\int_0^1\int_{B_r(0)}(\partial_p\zeta)(y)\frac{(x-y)^j(x-y)^l}{\sigma^2}U^p(\frac{x-y}{\sigma}+y)\frac{dy}{\sigma^2}d\sigma.\\
\eeno
It's easy to see that $(\delta\mathcal{R}[U])^{jl}$ is symmetric in $(j,l)$, depend linearly on $U$ and from the proof of Proposition 10.1 in \cite{ISOH2}, we know that
\beno
\partial_j(\delta\mathcal{R}[U])^{jl}=U.
\eeno
Moreover, it's obvious that ${\rm supp}\delta\mathcal{R}[U]\subseteq B_r(0)$ because ${\rm supp}U\subseteq B_r(0)$. On the other hand, by following the proof of Lemma 10.3 and Lemma 10.4 in  \cite{ISOH2}, we have
\beno
\|\delta\mathcal{R}[U] \|_0\leq C_0r\|U\|_0,\quad \|\nabla\delta\mathcal{R}[U] \|_0\leq C_0(r+1)\|U\|_1.
\eeno
In fact, Lemma 10.3 and Lemma 10.4 in  \cite{ISOH2} also hold for $U=U(x)$ which satisfies vanishing linear and angular moment with ${\rm supp} U\subseteq B_r(0)$. Thus, we obtain (\ref{e:estimate on high error}).\\
Finally, we set $\mathcal{R}U:=\mathcal{R}U_0+\sum\limits_{i=1}^{m-1}\mathcal{R}U_{ci}(x)+\delta\mathcal{R}U$, then $\mathcal{R}U$ is a symmetric matric function and satisfies
\begin{align}
\mathcal{R}U\in C_c^{\infty}(B_{r}),\quad {\rm div}\mathcal{R}U=U.\nonumber
\end{align}
Moreover, there holds
\beno
\|\mathcal{R}U\|_0\leq C_0(r,k)\Big(\sum\limits_{i=0}^{m-1}\frac{\|V\|_i}{\lambda^{i+1}}+\frac{\|V\|_m}{\lambda^m}\Big),\quad
\|\nabla\mathcal{R}U\|_0\leq C_0(r,k)\Big(\sum\limits_{i=0}^{m}\frac{\|V\|_{i}}{\lambda^{i}}+\frac{\|V\|_{m}}{\lambda^{m-1}}+\frac{\|V\|_{m+1}}{\lambda^m}\Big).
\eeno
In fact,
\begin{align}
&\|\mathcal{R}U_{ci}\|_0\leq   C_0(k)\frac{\|V\|_i}{\lambda^{i+1}},\quad \|\nabla\mathcal{R}U_{ci}\|_0\leq C_0(k)\Big(\frac{\|V\|_{i}}{\lambda^{i}}+\frac{\|V\|_{i+1}}{\lambda^{i+1}}\Big),\quad i=0,\cdot\cdot\cdot,m-1,\nonumber\\
&\|\delta\mathcal{R}U\|_0\leq  C_0(r,k)\frac{\|V\|_m}{\lambda^m},\quad
\|\nabla\delta\mathcal{R}U\|_0\leq C_0(r,k)\Big(\frac{\|V\|_{m}}{\lambda^{m-1}}+\frac{\|V\|_{m+1}}{\lambda^m}\Big).\nonumber
\end{align}
Summing them is what we claimed.\\

Now we introduce a vector space.  Put
 \begin{align}
 \Xi:=&\Big\{U(x):U(x)=(U_1(x),U_2(x))^T\in C_c^{\infty}(B_r; C^2),\int_{R^2}U_i(x)dx=0,\nonumber\\
 &\int_{R^2}(x_iU_j-x_jU_i)(x)dx=0,\quad i,j=1,2 ~~{\quad \rm and \quad }~~
U(x)=\sum_{j=0}^7\left(\begin{array}{c}
U_{1j}(x)e^{i\lambda_j k\cdot x}\\
U_{2j}(x)e^{i\lambda_j k\cdot x}
\end{array}\right)\Big\},\nonumber
\end{align}
where $ k\in R^2\setminus\{0\}$ and $\lambda_j>0, j=0,\cdot\cdot\cdot,7$.

\begin{proposition}\label{p:proposition R}
There exists a linear operator $\mathcal{R}$ from $\Xi$ to $C_c^{\infty}(B_r; \mathcal{S}^{2\times2})$ such that for any $U(x)\in \Xi$ with
 $$U(x)=\sum_{j=0}^7U_j(x):=\sum_{j=0}^7\left(\begin{array}{c}
U_{1j}(x)e^{i\lambda_j k\cdot x}\\
U_{2j}(x)e^{i\lambda_j k\cdot x}
\end{array}\right),$$
 there holds, for any integer $m\geq 2$, that
 \begin{align}\label{p:property on inverse operator}
 &{\rm div}\mathcal{R}U(x)=U(x),\quad \|\mathcal{R}U\|_0\leq C_0(r,k)\sum_{j=0}^7\Big(\sum_{i=0}^{m-1}\frac{\|U_{1j}\|_i+\|U_{2j}\|_i}{\lambda_j^{i+1}}
 +\frac{\|U_{1j}\|_m+\|U_{2j}\|_m}{\lambda_j^m}\Big),\nonumber\\
 &\|\nabla\mathcal{R}U\|_0\leq C_0(r,k)\sum_{j=0}^7\Big(\sum_{i=0}^{m}\frac{\|U_{1j}\|_{i}+\|U_{2j}\|_{i}}{\lambda_j^{i}}+\frac{\|U_{1j}\|_{m}+\|U_{2j}\|_{m}}{\lambda_j^{m-1}}
 +\frac{\|U_{1j}\|_{m+1}+\|U_{2j}\|_{m+1}}{\lambda_j^m}\Big).
 \end{align}

\begin{proof}
We have defined the operator $\mathcal{R}$ on function $U_j(x)=\left(\begin{array}{c}
U_{1j}(x)e^{i\lambda_j k\cdot x}\\
U_{2j}(x)e^{i\lambda_j k\cdot x}
\end{array}\right)$
and
\beno
&&\|\mathcal{R}U_j\|_0\leq C_0(r,k)\Big(\sum_{i=0}^{m-1}\frac{\|U_{1j}\|_i+\|U_{2j}\|_i}{\lambda_j^{i+1}}
 +\frac{\|U_{1j}\|_m+\|U_{2j}\|_m}{\lambda_j^m}\Big),\nonumber\\
&&\|\nabla\mathcal{R}U_j\|_0\leq C_0(r,k)\Big(\sum_{i=0}^{m}\frac{\|U_{1j}\|_{i}+\|U_{2j}\|_{i}}{\lambda_j^{i}}
+\frac{\|U_{1j}\|_{m}+\|U_{2j}\|_{m}}{\lambda_j^{m-1}}
 +\frac{\|U_{1j}\|_{m+1}+\|U_{2j}\|_{m+1}}{\lambda_j^m}\Big).\nonumber
 \eeno
Then, set
\begin{align}
\mathcal{R}U:=\sum_{j=0}^7\mathcal{R}U_j.\nonumber
\end{align}
It is obvious that $\mathcal{R}U \in C_c^{\infty}(B_r; \mathcal{S}^{2\times2})$ and satisfies (\ref{p:property on inverse operator}).
\end{proof}
\end{proposition}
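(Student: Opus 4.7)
The plan is to extend $\mathcal{R}$ to all of $\Xi$ by linearity, leveraging the single-frequency construction established in the paragraphs preceding the proposition. Concretely, for $\vec{U}\in\Xi$ with the given decomposition $\vec{U}(x)=\sum_{j=0}^7 \vec{U}_j(x)$ where $\vec{U}_j(x)=(U_{1j}(x)e^{i\lambda_j k\cdot x},\,U_{2j}(x)e^{i\lambda_j k\cdot x})^T$, I would set $\mathcal{R}U:=\sum_{j=0}^7 \mathcal{R}U_j$, with each $\mathcal{R}U_j$ the symmetric compactly supported matrix produced above by the single-frequency construction applied with frequency $\lambda_j$ and coefficients $U_{1j},U_{2j}$.

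Next, I would verify the three required properties in turn. Symmetry of $\mathcal{R}U$ and the inclusion $\mathcal{R}U\in C_c^\infty(B_r;\mathcal{S}^{2\times 2})$ are preserved under finite sums, so both pass for free. The divergence identity ${\rm div}\,\mathcal{R}U=\vec{U}$ follows from linearity of ${\rm div}$ combined with ${\rm div}\,\mathcal{R}U_j=\vec{U}_j$, which is part of the single-frequency output. The $L^\infty$ bound reduces to the triangle inequality applied to $\mathcal{R}U=\sum_j\mathcal{R}U_j$ followed by the single-frequency estimate $\|\mathcal{R}U_j\|_0\le C_0(r)\bigl(\sum_{i=0}^{m-1}(\|U_{1j}\|_i+\|U_{2j}\|_i)/\lambda_j^{i+1}+(\|U_{1j}\|_m+\|U_{2j}\|_m)/\lambda_j^m\bigr)$ applied term by term; summing over $j=0,\ldots,7$ yields exactly the claimed bound (\ref{p:property on inverse operator}).

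The main subtlety, and the step I expect to be the only non-routine point, is that the vanishing moment conditions in the definition of $\Xi$ are imposed on the full sum $\vec{U}$ rather than on each individual $\vec{U}_j$, whereas the single-frequency construction used these conditions on $\vec{U}_j$ to produce the corrector $\delta\mathcal{R}U_j$ with compact support inside $B_r$. I would resolve this by noting that the residual $(K_1^{(j)},K_2^{(j)})^T$ appearing in the single-frequency construction satisfies $(K_1^{(j)},K_2^{(j)})^T={\rm div}\bigl(\mathcal{R}U_o^{(j)}+\sum_{i=1}^{m-1}\mathcal{R}U_{ci}^{(j)}\bigr)-\vec{U}_j$, so the vanishing integral and antisymmetric vanishing first moment of $K^{(j)}$ are automatic from the same property of $\vec{U}_j$ (the divergence of a compactly supported symmetric matrix has both). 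Thus either (a) one reads the definition of $\Xi$ as implicitly imposing these moment conditions on each summand (the natural compatible reading), or (b) one replaces the eight individual correctors by a single corrector $\delta\mathcal{R}$ applied to $\sum_j K^{(j)}$, which needs only the vanishing moments of the total $\vec{U}$ guaranteed by $\vec{U}\in\Xi$. Either route closes the argument without affecting the $L^\infty$ bookkeeping, and completes the proof.
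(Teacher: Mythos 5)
Your proposal follows the same route as the paper's very short proof — extend $\mathcal{R}$ by linearity over the eight frequency blocks, inherit the divergence identity and the $L^\infty$ bound from the single-frequency estimate via the triangle inequality — but you flag a genuine subtlety that the paper's proof silently skips. The definition of $\Xi$ imposes $\int U_i=0$ and $\int(x_iU_j-x_jU_i)=0$ only on the \emph{total} $\vec{U}=\sum_j\vec{U}_j$, whereas the single-frequency corrector $\delta\mathcal{R}U_j$ is produced (via the result quoted from \cite{ISOH2}) from the vanishing-moment property of the single-frequency residual $K^{(j)}$, which in the paper's own argument is deduced from the vanishing moments of $\vec{U}_j$. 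Since those moments for the individual $\vec{U}_j$ are not assumed in $\Xi$, the paper's ``it is obvious'' step is not quite literal. Your option (b) — build all eight main parts $\mathcal{R}U_o^{(j)}+\sum_{i=1}^{m-1}\mathcal{R}U_{ci}^{(j)}$ (which need no moment hypotheses), sum them into one compactly supported symmetric matrix so that the total residual $\sum_jK^{(j)}={\rm div}\bigl(\sum_j(\mathcal{R}U_o^{(j)}+\sum_i\mathcal{R}U_{ci}^{(j)})\bigr)-\vec{U}$ inherits vanishing moments from those of ${\rm div}$ of a compactly supported symmetric matrix and of $\vec{U}\in\Xi$, then apply a single corrector $\delta\mathcal{R}$ to that total residual — is the correct and clean repair, and it reproduces the stated bound since $\|\delta\mathcal{R}\|_0\le C_0(r)\sum_j\|K^{(j)}\|_0$. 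One small caution: the intermediate sentence in your write-up, claiming that the vanishing moments of $K^{(j)}$ are ``automatic from the same property of $\vec{U}_j$'', is circular if read literally, because that is exactly what is not guaranteed by $\Xi$; the parenthetical correctly identifies the source of vanishing moments for ${\rm div}$ of the partial sum, but it does not by itself transfer to $\vec{U}_j$. Your option (a), reading $\Xi$ as tacitly requiring the moment conditions summand-by-summand, does happen to match how $\Xi$ is used later in the paper, but option (b) is the version that proves the proposition as literally stated.
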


\subsection{The operator $\mathcal{G}$ }

\indent

Let $f(x)=\varphi(x)e^{i\lambda k\cdot x}$ with $\varphi(x)\in C_c^{\infty}(B_r;C)$ and $\int_{R^2}\varphi(x)e^{i\lambda k\cdot x}dx=0$, where $\lambda> 0$ and $k\in R^2\setminus\{0\}$. For any $m\geq 2$, set
$$\mathcal{G}f_o:=\sum_{j=0}^{m-1}\frac{-ik(ik\cdot\nabla)^j\varphi}{(\lambda |k|^2)^{j+1}}e^{i\lambda k\cdot x}.$$
A straightforward computation gives
$${\rm div}(\mathcal{G}f_o)=f-\frac{(ik\cdot\nabla)^m \varphi}{(\lambda|k|^2)^m}e^{i\lambda k\cdot x}$$
and
$$\|\mathcal{G}f_{o}\|_0\leq  C_0(k)\sum_{j=0}^{m-1}\frac{\|\varphi\|_j}{\lambda^{j+1}},\quad \|\nabla\mathcal{G}f_{o}\|_0\leq C_0(k)\sum_{j=0}^{m-1}\Big(\frac{\|\varphi\|_{j}}{\lambda^{j}}+\frac{\|\varphi\|_{j+1}}{\lambda^{j+1}}\Big).$$
Since
\begin{align}
\int_{R^2}f(x)dx=0, \quad \int_{R^2}{\rm div}\big(\mathcal{G}f_o\big)dx=0,\nonumber
\end{align}
therefore
\begin{align}
\int_{R^2}\frac{(ik\cdot\nabla)^m\varphi}{(\lambda|k|^2)^m}e^{i\lambda k\cdot x}dx=0.\nonumber
\end{align}
From \cite{GHG}, we know that there exists $\mathcal{G}f_{c}\in C_c^{\infty}(B_r; C^2)$ such that
\beno
&&{\rm div}(\mathcal{G}f_{c})=\frac{(ik\cdot\nabla)^m \varphi}{(\lambda|k|^2)^m}e^{i\lambda k\cdot x},\quad \|\mathcal{G}f_{cm}\|_0\leq C_0(r, k)\frac{\|\nabla^m\varphi\|_0}{\lambda^m},\\
&&\|\nabla\mathcal{G}f_{cm}\|_0\leq C_0(r,k)\Big(\frac{\|\varphi\|_{m}}{\lambda^{m-1}}+\frac{\|\varphi\|_{m+1}}{\lambda^m}\Big).
\eeno
 In fact, the Bogovskii solution operator are bounded from $W_0^{s,p}$ to $W_0^{s+1,p}$ for any $s> 0, 1<p<\infty,$ thus taking $p=4$ and combining Sobolev embedding, we obtain the above estimates.

Finally, we set
$$\mathcal{G}f:=\mathcal{G}f_{o}+\mathcal{G}f_{c},$$
thus we have
\begin{align}
{\rm div}\mathcal{G}f
=f\nonumber
\end{align}
 and
 \begin{align}
 &\mathcal{G}f\in C_c^{\infty}(B_r;C^2),\quad \|\mathcal{G}f\|_0\leq C_0(r,k)\Big(\sum_{i=0}^{m-1}\frac{\|\varphi\|_i}{\lambda^{i+1}}+\frac{\|\varphi\|_m}{\lambda^{m}}\Big)\nonumber\\
  &\|\nabla\mathcal{G}f\|_0\leq C_0(r,k)\Big(\sum_{i=0}^{m}\frac{\|\varphi\|_{i}}{\lambda^{i}}+\frac{\|\varphi\|_{m}}{\lambda^{m-1}}
  +\frac{\|\varphi\|_{m+1}}{\lambda^{m}}\Big).\nonumber
 \end{align}
In conclusion, we have
 \begin{proposition}\label{p:inverse 2}
 Let the vector space $\Psi$ given by
 \begin{align}
 \Psi:=\Big\{H(x): H(x)=\sum_{j=0}^mH_j(x):=\sum_{j=0}^mb_j(x)e^{i\lambda_j k\cdot x},\quad b_j\in C_c^{\infty}(B_{r};C)\quad {\rm and}\quad \int_{R^2}H_j(x)dx=0\Big\},\nonumber
 \end{align}
 then there exists a linear operator $\mathcal{G}: \Psi\rightarrow C_c^{\infty}(B_{r};C^2)$ such that for any positive integer $m\geq 2$ and any $H(x)=\sum\limits_{j=0}^7b_j(x)e^{i\lambda k\cdot x}\in\Psi$, there holds
 \begin{align}
 &{\rm div}\mathcal{G}(H)(x)=H(x),\quad \|\mathcal{G}(H)\|_0\leq C_0(r,k)\sum_{j=0}^7\sum_{i=0}^{m-1} \Big(\frac{\|b_j\|_i}{\lambda_j^{i+1}}+\frac{\|b_i\|_m}{\lambda_j^m}\Big),\nonumber\\
 &\|\nabla\mathcal{G}(H)\|_0\leq C_0(r,k)\sum_{j=0}^7\sum_{i=0}^{m} \Big(\frac{\|b_j\|_{i}}{\lambda_j^{i}}+\frac{\|b_i\|_{m}}{\lambda_j^{m-1}}
 +\frac{\|b_i\|_{m+1}}{\lambda_j^m}\Big).\nonumber
 \end{align}
\end{proposition}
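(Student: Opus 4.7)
The proof is essentially a packaging result: the preceding paragraphs of this subsection have already constructed, for every single-mode datum $H_j(x) = b_j(x) e^{i\lambda_j k\cdot x}$ with $\int_{R^2} H_j\,dx = 0$, a function $\mathcal{G}(H_j) \in C_c^\infty(B_r; C^2)$ satisfying ${\rm div}\,\mathcal{G}(H_j) = H_j$ together with the estimate
\begin{align*}
\|\mathcal{G}(H_j)\|_0 \leq C_0(r)\sum_{i=0}^{m-1}\Big(\tfrac{\|b_j\|_i}{\lambda_j^{i+1}}+\tfrac{\|b_j\|_m}{\lambda_j^m}\Big).
\end{align*}
Thus the bulk of the analytic content is already in place, and what remains is to assemble these single-mode inversions into one operator on the finite-dimensional span $\Psi$ of such modes.

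The plan is then to extend $\mathcal{G}$ by linearity: given $H = \sum_j H_j \in \Psi$ with each $H_j = b_j e^{i\lambda_j k\cdot x}$ of zero mean, set
\begin{align*}
\mathcal{G}(H) := \sum_j \mathcal{G}(H_j).
\end{align*}
First I would check well-definedness: the decomposition of $H$ into its pure-frequency summands is unique (the exponentials $e^{i\lambda_j k\cdot x}$ with distinct $\lambda_j$ are linearly independent over compactly supported smooth coefficients), and the zero-mean condition imposed on $H_j$ in the definition of $\Psi$ is precisely what is needed to invoke the single-mode construction for each $j$. Linearity is then automatic from the definition. Since each summand $\mathcal{G}(H_j)$ belongs to $C_c^\infty(B_r; C^2)$ and the sum is finite, the total $\mathcal{G}(H)$ again lies in $C_c^\infty(B_r; C^2)$.

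Next I would verify the divergence identity and the estimate. The identity ${\rm div}\,\mathcal{G}(H) = \sum_j {\rm div}\,\mathcal{G}(H_j) = \sum_j H_j = H$ is immediate by linearity of the divergence. The norm bound is obtained from the triangle inequality: summing the single-mode estimates gives
\begin{align*}
\|\mathcal{G}(H)\|_0 \leq \sum_j \|\mathcal{G}(H_j)\|_0 \leq C_0(r)\sum_j\sum_{i=0}^{m-1}\Big(\tfrac{\|b_j\|_i}{\lambda_j^{i+1}}+\tfrac{\|b_j\|_m}{\lambda_j^m}\Big),
\end{align*}
which is exactly the stated inequality.

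The only step that required real work was the construction of the final corrector $\mathcal{G}f_{cm}$ in the single-mode step; this is the main obstacle and has already been resolved in the paragraphs above via the Bogovskii-type solvability result from \cite{GS} (producing a compactly supported $W^{1,4}$-solution of a divergence equation whose right-hand side has vanishing integral), followed by Sobolev embedding to pass to an $L^\infty$ bound of the correct order $\lambda_j^{-m}$. Once that single-mode piece is in hand, the present proposition reduces to linear aggregation and poses no further difficulty.
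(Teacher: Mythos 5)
Your proposal is correct and follows essentially the same route as the paper: the paper's own proof simply says it is analogous to Proposition \ref{p:proposition R}, which in turn is proved by summing the single-mode operators $\mathcal{G}(H_j)$ constructed in the preceding paragraphs and applying the triangle inequality, exactly as you do. The extra remarks you add about well-definedness via linear independence of the distinct exponentials are a reasonable (if unnecessary) elaboration and do not change the argument.
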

\begin{proof}
The proof is similar to that of Proposition \ref{p:proposition R}, we omit it here.
\end{proof}

\begin{Corollary}\label{pro:support of inverse operator}
For $f=g(t,x)e^{i\lambda k\cdot x}\in C_c^{\infty}(Q_r)$, we have $\partial_t \mathcal{R}(f)=\mathcal{R}(\partial_tf),\quad \partial_t G(f)=G(\partial_tf)$, because the two operators only act space-variable. On the other hand, there also hold ${\rm supp} \mathcal{R}(f)\subseteq Q_r, {\rm supp} G(f)\subseteq Q_r$, because $\mathcal{R}(0)=0, G(0)=0$ and this can be obtained from the construction of $\mathcal{R}, G$ directly.
\end{Corollary}

\setcounter{equation}{0}

\section{The construction of approximate solutions}

The construction of $\tilde{v},~\tilde{p},~\tilde{\theta},~\tilde{R},~\tilde{g}$ from  $v,~p,~\theta,~R,~g$ consists of several steps. The main idea is to decompose the stress errors into some blocks with the add of geometric lemma and remove one block by constructing new approximate solutions in each step. In this section, we perform the first step.

For convenience, we set $v_0:=v,~p_0:=p,~\theta_0:=\theta,~R_0:=R,~g_0:=g$ and in this section $C_0$ denotes a absolute constant.
\subsection{Partition of unity and Conditions on the parameters}
We first introduce a partition of unity. Following the construction given in \cite{CDL2}, we have the following partition of unity. For some constants $a$ such that $\frac{\sqrt{3}}{2}<a<1$ , we have a family of functions $\alpha_l\in C_c^{\infty}(R^3), l\in Z^3$ such that
\begin{align}\label{p:unity}
\sum\limits_{l\in Z^3}\alpha_l^2=1,\quad \hbox{supp}\alpha_l\subseteq B_{a}(l).
\end{align}
Our construction depends on three parameters: $\ell, \mu_1, \lambda_1$ and we assume they satisfy the following inequalities:
  \begin{align}\label{a:assumption on parameter}
  \mu_1\geq \frac{\Lambda}{\delta}\geq 1,\quad \ell^{-1}\geq\frac{\Lambda}{\eta\delta}\geq1,\quad \lambda_1\geq \max\{\mu_1^{1+\varepsilon},\ell^{-(1+\varepsilon)}\}.
\end{align}
\subsection{Decomposition of stress error}
First, we apply Geometric Lemma \ref{p:split} to obtain $r_{0}>0$ and vectors
\begin{align}
k_1=\Big(\frac{1}{\sqrt{2}},\frac{1}{\sqrt{3}}\Big)^T,\quad k_2=\Big(-\frac{1}{2},\frac{2}{\sqrt{6}}\Big)^T,\quad k_3=\Big(\frac{1}{2},0\Big)^T\nonumber
\end{align}
which are given in the proof of lemma \ref{p:split}, see (\ref{d:definition of ki}),
 together with corresponding functions
\begin{align*}
    \gamma_{k_i}\in C^{\infty}(B_{r_{0}}(Id)),\quad g_{k_i}\in C^{\infty}(R^{2}),\quad  i=1,2,3,
\end{align*}
where $g_{k_3}=0$.
Next, we let $\varphi\in C_c^\infty(R^2\times R)$ be a standard nonnegative radial function and denote the corresponding family of mollifiers by
\begin{align}
\varphi_{\ell}(t,x):=\frac{1}{\ell^3}\varphi\Big(\frac{t}{\ell},\frac{x}{\ell}\Big).\nonumber
\end{align}
Then set
\begin{align}
f_{0\ell}(t,x):=f_0\ast\varphi_\ell(t,x),\qquad
R_{0\ell}(t,x):=R_0\ast\varphi_\ell(t,x).\nonumber
\end{align}
By Lemma \ref{p:split}, we decompose $f_{0}$ as
\begin{align}
f_{0}(t,x)=\sum\limits_{i=1}^{2}g_{k_i}(f_{0})(t,x)k_i:=-\sum\limits_{i=1}^{2}c_i(t,x)k_i.\nonumber
\end{align}
Here we denote $c_i(t,x):=-g_{k_i}(f_{0})(t,x)$.
Thus
\begin{align}\label{p:split 2}
f_{0\ell}(t,x)=-\sum\limits_{i=1}^{2}c_{i\ell}(t,x)k_i.
\end{align}
Since $g_{k_1}$ is linear function,
\begin{align}\label{c:the coffieence of termperture error}
c_{i\ell}(t,x)=-g_{k_i}(f_{0\ell})(t,x).
\end{align}
By (\ref{e:reynold initial 2}), we know that
\begin{align}\label{p:property on cil}
c_{i\ell}(t,x)\in C^{\infty}_c(Q_{r+\ell}),~\|c_i\|_0\leq 2\delta.
\end{align}
Then, we introduce $\rho(t,x)$ as following
\begin{align}\label{d:definition on p}
\rho(t,x)\in C_c^\infty(Q_{r+\delta}),\quad\rho(t,x)=\sqrt{2\delta}~~~ {\rm in}~~~ Q_{r+\frac{\delta}{2}},\quad 0\leq \rho(t,x)\leq \sqrt{2\delta},\quad \|\rho\|_{C^k_{t,x}}\leq C_0(k)\delta^{-\frac{1}{2}-(k-1)}
\end{align}
 and set
\begin{align}\label{d:difinition on e}
e(t,x):=\rho^2(t,x).
\end{align}
By (\ref{e:reynold initial}), parameter assumption (\ref{a:assumption on parameter}), (\ref{d:definition on p}) and (\ref{d:difinition on e}), we have
\begin{align}
\Big\|\frac{R_{0\ell}(t,\cdot)}{e(t,\cdot)}\Big\|_0\leq \frac{\eta}{2}\leq\frac{r_0}{2}\nonumber
\end{align}
if we take $\eta=r_0$, thus
by Lemma \ref{p:split}
\begin{align}\label{d:decomposition reynold}
e(t,x)Id-R_{0\ell}(t,x)=&e(t,x)\Big(Id-\frac{R_{0\ell}(t,x)}{e(t,x)}\Big)=e(t,x)
\sum\limits_{i=1}^{3}\gamma^2_{k_i}\Big(Id-
\frac{R_{0\ell}(t,x)}{e(t,x)}\Big)k_i\otimes k_i\nonumber\\
=&\sum\limits_{i=1}^{3}\Big(\rho(t,x)\gamma_{k_i}\Big(Id-
\frac{R_{0\ell}(t,x)}{e(t,x)}\Big)\Big)^2k_i\otimes k_i
:=\sum\limits_{i=1}^{3}a^2_i(t,x)k_i\otimes k_i.
\end{align}
Where $a_i(t,x)=\rho(t,x)\gamma_{k_i}\Big(Id-\frac{R_{0\ell}(t,x)}{e(t,x)}\Big)$ satisfies
\begin{align}\label{b:bound on ai}
 a_i\in C^{\infty}_c(Q_{r+\delta}),\quad \|a_i\|_0\leq \frac{M\sqrt{\delta}}{300}.
 \end{align}
Here we denote constant $M$ by
\begin{align}\label{d:definition on m}
M:=\max\Big\{C_0,600\max_{1\leq i\leq 3}\|\gamma_{k_i}\|_{L^\infty\big(B_{\frac{r_0}{2}}(Id)\big)}, 600\max_{1\leq i\leq 3}\frac{1}{\min_{B_{\frac{r_0}{2}}(Id)}\gamma_{k_i}}\Big\}.
\end{align}

\subsection{Construction of 1-th perturbation on velocity}

\indent

\subsubsection{Main perturbation on velocity}

For any $l\in Z^3$, we set
   \begin{align}\label{d:definit b}
    b_{1l}(t,x):=&\frac{a_1(t,x)\alpha_l(\mu_1t,\mu_1 x)}{\sqrt{2}},
    \end{align}
then, by (\ref{b:bound on ai}), it's easy to obtain
\begin{align}\label{b:bound on componence 1}
\|b_{1l}\|_0\leq \frac{M\sqrt{\delta}}{300}.
\end{align}
As in \cite{ISV2}, we set $[l]:=\sum_{j=0}^22^j[l_j]$, if $l=(l_0, l_1, l_2)$, where
    \begin{align}
    [l_j]=\left\{
    \begin{array}{cc}
    1, ~~~~l_j~~~{\rm is ~~~even},\\
    0,~~~~ l_j~~~{\rm is~~~ odd}.
    \end{array}
    \right.\nonumber
    \end{align}
Thus, $[l]$ can only take values in $\{0,1,\cdot\cdot\cdot,7\}$.

Now we denote main $l$-perturbation $ w_{1ol}$ by
   \begin{align}\label{d:definition w 1ol}
    w_{1ol}(t,x):=b_{1l}(t,x)k_1\Big(e^{i\lambda_1 2^{[l]} k_1^{\perp}\cdot \big(x-v_{0}(\frac{l}{\mu_1})t\big)}+e^{-i\lambda_1 2^{[l]}k_1^{\perp}\cdot \big(x-v_{0}(\frac{l}{\mu_1})t\big)}\Big).
   \end{align}
Here and subsequent, we denote $a^{\perp}=(-a_2,a_1)^T$ if $a=(a_1,a_2)^T.$\\
Then set 1-th main perturbation
   \begin{align}\label{d:1o}
    w_{1o}:=\sum_{l\in Z^3}w_{1ol}=\sum_{j=0}^7\sum_{[l]=j}b_{1l}k_1\Big(e^{i\lambda_1 2^j k_1^{\perp}\cdot \big(x-v_{0}(\frac{l}{\mu_1})t\big)}+e^{-i\lambda_1 2^jk_1^{\perp}\cdot \big(x-v_{0}(\frac{l}{\mu_1})t\big)}\Big).
   \end{align}
Obviously, $w_{1ol},w_{1o}$ are all real 2-dimensional vector-valued functions.

By (\ref{p:unity}), we have $\hbox{supp} \alpha_l\cap \hbox{supp}\alpha_{l'}=\emptyset$  if $|l-l'|\geq2$, hence there are at most 30 nonzero terms at every point $(t,x)\in R^3$ in the summation (\ref{d:1o}), thus by (\ref{b:bound on componence 1})
   \begin{align}\label{b:bounded 3}
  \|w_{1o}\|_0\leq \frac{M\sqrt{\delta}}{6}.
  \end{align}
Furthermore, if $b_{1l}(t,x)\neq0$, then $|(\mu_1t,\mu_1x)-l|\leq 1$ and $|(t,x)|\leq r+\delta$, thus $|l|\leq C_0(r)\mu_1$.
By (\ref{b:bound on ai}), we know that for any $l\in Z^3$,
  \begin{align}\label{p:support}
  b_{1l}\in C^{\infty}_c(Q_{r+\delta}),\quad w_{1ol}\in C^{\infty}_c(Q_{r+\delta}),\quad w_{1o}\in C^{\infty}_c(Q_{r+\delta}).
\end{align}
 \subsubsection{The correction $w_{1c}$ and the 1-th perturbation $w_{1}$ }

   \indent
We define $l$-correction by
   \begin{align}\label{d:definition w 1ocl}
   w_{1cl}:=&\frac{\nabla^{\perp}b_{1l}}{i\lambda_1 2^{[l]}}\Big(e^{i\lambda_1 2^{[l]} k_1^{\perp}\cdot \big(x-v_{0}(\frac{l}{\mu_1})t\big)}-e^{-i\lambda_1 2^{[l]} k_1^{\perp}\cdot \big(x-v_{0}(\frac{l}{\mu_1})t\big)}\Big)\nonumber\\
   &-\nabla^{\bot}\Big(\frac{\nabla b_{1l}\cdot k_1^{\perp}}{\lambda_1^22^{2[l]}|k_1|^2}\Big(e^{i\lambda_1 2^{[l]} k_1^{\perp}\cdot \big(x-v_{0}(\frac{l}{\mu_1})t\big)}+e^{-i\lambda_1 2^{[l]} k_1^{\perp}\cdot \big(x-v_{0}(\frac{l}{\mu_1})t\big)}\Big)\Big),
   \end{align}
   where $\nabla^{\perp}=(-\partial_{x_2},\partial_{x_1})^T.$

Then 1-th correction is given by
   \begin{align}\label{d:w 1oc}
    w_{1c}:=\sum_{l\in Z^3}w_{1cl}.
    \end{align}
Finally, we denote 1-th perturbation $w_1$ by
   \begin{align}
    w_1:=w_{1o}+w_{1c}.\nonumber
   \end{align}
Thus, if we denote $w_{1l}$ by
   \begin{align}
   w_{1l}:=w_{1ol}+w_{1cl},\nonumber
   \end{align}
   then, we have
   \begin{align}
   w_{1l}=\nabla^{\perp}{\rm div}\Big(-\frac{b_{1l}}{\lambda_1^22^{2[l]}|k_1|^2}k_1^{\perp}
   2\cos\Big(\lambda_12^{[l]}k_1^{\perp}\cdot \big(x-v_{0}(\frac{l}{\mu_1})t\big)\Big)\Big)
   \end{align}
and
   \begin{align}
   w_1=\sum\limits_{l\in Z^3}w_{1l},\quad {\rm div}w_{1l}=0,\quad\int_{R^2}w_{1l}dx=0, \quad \int_{R^2}(x_iw_{1lj}-x_jw_{1li})dx=0, \quad i,j=1,2.\nonumber
   \end{align}
In fact
   \begin{align}
   \int_{R^2}(x_1w_{1l2}-x_2w_{1l1})dx=\int_{R^2}(x_1\partial_1{\rm div}\vec{a}_1+x_2\partial_2{\rm div}\vec{a}_1)dx=-2\int_{R^2}{\rm div}\vec{a}_1dx=0,\nonumber
   \end{align}
where $$\vec{a}_1=-\frac{b_{1l}}{\lambda_1^22^{2[l]}|k_1|^2}k_1^{\perp}
2\cos\Big(\lambda_12^{[l]}k_1^{\perp}\cdot \big(x-v_{0}(\frac{l}{\mu_1})t\big)\Big)$$
is a vector-valued smooth function with compact support.\\
Since there are only finite nonzero terms in the summation (\ref{d:1o}) and (\ref{d:w 1oc}), thus
  \begin{align}\label{p:vanishing moment}
   {\rm div}w_{1}=0,\quad\int_{R^2}w_{1}dx=0, \quad \int_{R^2}(x_iw_{1j}-x_jw_{1i})dx=0, \quad i,j=1,2.
   \end{align}
Now we set
\begin{align}\label{d:difinition on k1l}
k_{1l}:=&b_{1l}k_1+\frac{\nabla^{\perp}b_{1l}}{i\lambda_1 2^{[l]}}-\nabla^{\bot}\Big(\frac{\nabla b_{1l}\cdot k_1^{\perp}}{\lambda_1^22^{2[l]}|k_1|^2}\Big)+\frac{\nabla b_{1l}\cdot k_1^{\perp}}{i\lambda_12^{[l]}|k_1|^2}\cdot k_1,\nonumber\\
k_{-1l}:=&b_{1l}k_1+\frac{\nabla^{\perp}b_{1l}}{-i\lambda_1 2^{[l]}}-\nabla^{\bot}\Big(\frac{\nabla b_{1l}\cdot k_1^{\perp}}{\lambda_1^22^{2[l]}|k_1|^2}\Big)+\frac{\nabla b_{1l}\cdot k_1^{\perp}}{-i\lambda_12^{[l]}|k_1|^2}\cdot k_1,
\end{align}
then
\begin{align}
w_{1l}=k_{1l}e^{i\lambda_1 2^{[l]} k_1^{\perp}\cdot \big(x-v_{0}(\frac{l}{\mu_1})t\big)}+k_{-1l}e^{-i\lambda_1 2^{[l]}k_1^{\perp}\cdot \big(x-v_{0}(\frac{l}{\mu_1})t\big)}\nonumber
\end{align}
and
\begin{align}\label{r:second representati on velocity perturbation}
w_1=\sum_{j=0}^7\sum_{[l]=j}\Big(k_{1l}e^{i\lambda_1 2^j k_1^{\perp}\cdot \big(x-v_{0}(\frac{l}{\mu_1})t\big)}+k_{-1l}e^{-i\lambda_1 2^jk_1^{\perp}\cdot \big(x-v_{0}(\frac{l}{\mu_1})t\big)}\Big).
\end{align}
Finally, it's obvious
   \begin{align}
   w_{1ol},~w_{1cl},~w_{1l},~w_{1o},~w_{1c},~w_1,~k_{1l},~k_{-1l}\in C^{\infty}_c(Q_{r+\delta}).\nonumber
   \end{align}
Thus, we complete the construction of perturbation $w_1$.

\subsection{Construction of 1-th perturbation on temperature}

\indent

To construct $\chi_1$, we first
   denote $\beta_{1l}$ by
   \begin{align}\label{d:definition b}
    \beta_{1l}(t,x):=\frac{c_{1\ell}(t,x)\alpha_l(\mu_1t,\mu_1x)}{\sqrt{2e(t,x)}\gamma_{k_1}
    \Big(Id-\frac{R_{0\ell}(t,x)}{e(t,x)}\Big)}.
    \end{align}
Since ${\rm supp}c_{1\ell}\subseteq {\rm supp}e$, so $\beta_{1l}$ is well-defined.
Then we denote main $l$-perturbation $\chi_{1ol}$ by
    \begin{align}
    \chi_{1ol}(t,x):=&\beta_{1l}(t,x)\Big(e^{i\lambda_1 2^{[l]} k_1^{\perp}\cdot \big(x-v_0(\frac{l}{\mu_1})t\big)}+e^{-i\lambda_1 2^{[l]} k_1^{\perp}\cdot \big(x-v_0(\frac{l}{\mu_1})t\big)}\Big)\nonumber
    \end{align}
and $l$-correction $\chi_{1cl}$ by
    \begin{align}
    \chi_{1cl}(t,x):=&\triangle\beta_{1l}(t,x)\Big(\frac{e^{i\lambda_1 2^{[l]} k_1^{\perp}\cdot \big(x-v_0(\frac{l}{\mu_1})t\big)}}{-\lambda_1^22^{2[l]}|k_1|^2}+\frac{e^{-i\lambda_1 2^{[l]} k_1^{\perp}\cdot \big(x-v_0(\frac{l}{\mu_1})t\big)}}{-\lambda_1^22^{2[l]}|k_1|^2}\Big)\nonumber\\
    &+2\nabla\beta_{1l}(t,x)\cdot\nabla\Big(\frac{e^{i\lambda_1 2^{[l]} k_1^{\perp}\cdot \big(x-v_0(\frac{l}{\mu_1})t\big)}}{-\lambda_1^22^{2[l]}|k_1|^2}+\frac{e^{-i\lambda_1 2^{[l]} k_1^{\perp}\cdot \big(x-v_0(\frac{l}{\mu_1})t\big)}}{-\lambda_1^22^{2[l]}|k_1|^2}\Big).\nonumber
   \end{align}
 Finally, the $l$-th perturbation is given by
 \begin{align}
 \chi_{1l}(t,x):=\chi_{1ol}(t,x)+\chi_{1cl}(t,x)
 =\triangle\Big(\beta_{1l}(x,t)\Big(\frac{e^{i\lambda_1 2^{[l]} k_1^{\perp}\cdot \big(x-v_0(\frac{l}{\mu_1})t\big)}}{-\lambda_1^22^{2[l]}|k_1|^2}+\frac{e^{-i\lambda_1 2^{[l]} k_1^{\perp}\cdot \big(x-v_0(\frac{l}{\mu_1})t\big)}}{-\lambda_1^22^{2[l]}|k_1|^2}\Big)\Big).\nonumber
 \end{align}
Set
   \begin{align}
   \chi_{1o}(t,x):=\sum_{l\in Z^3}\chi_{1ol}(t,x),\quad \chi_{1c}(t,x):=\sum_{l\in Z^3}\chi_{1cl}(t,x),\quad \chi_1(t,x):=\sum_{l\in Z^3}\chi_{1l}(t,x).\nonumber
   \end{align}
   Obviously, $\chi_{1ol}, ~\chi_{1cl}, ~\chi_{1l}$ and $\chi_1$ are all real scalar functions and
   \begin{align}
   \chi_{1o}(t,x)=\sum_{j=0}^7\sum_{[l]=j}\beta_{1l}(t,x)\Big(e^{i\lambda_1 2^j k_1^{\perp}\cdot \big(x-v_0(\frac{l}{\mu_1})t\big)}+e^{-i\lambda_1 2^j k_1^{\perp}\cdot \big(x-v_0(\frac{l}{\mu_1})t\big)}\Big).\nonumber
   \end{align}
Moreover, it's easy to get
   \begin{align}
   \int_{R^2} \chi_{1l}(t,x)dx=0,\qquad \int_{R^2} x_1\chi_{1l}(t,x)dx=0.\nonumber
   \end{align}
Since there are only finite terms in the summation of $\chi_1$, therefore
\begin{align}\label{p:some integration poperty on}
   \int_{R^2} \chi_{1}(t,x)dx=0,\qquad \int_{R^2} x_1\chi_{1}(t,x)dx=0.
   \end{align}
 If set
   \begin{align}\label{d:difinition on h1l}
   h_{1l}:=\beta_{1l}-\frac{\triangle\beta_{1l}}{\lambda_1^22^{2[l]}|k_1|^2}+
   2\frac{\nabla\beta_{1l}\cdot k_1^{\perp}}{i\lambda_1 2^{[l]}|k_1|^2},\quad
    h_{-1l}:=\beta_{1l}-\frac{\triangle\beta_{1l}}{\lambda_1^22^{2[l]}|k_1|^2}+
    2\frac{\nabla\beta_{1l}\cdot k_1^{\perp}}{-i\lambda_1 2^{[l]}|k_1|^2},
   \end{align}
   then
   \begin{align}
   \chi_{1l}=h_{1l}e^{i\lambda_1 2^{[l]} k_1^{\perp}\cdot \big(x-v_0(\frac{l}{\mu_1})t\big)}+h_{-1l}e^{-i\lambda_1 2^{[l]} k_1^{\perp}\cdot \big(x-v_0(\frac{l}{\mu_1})t\big)}\nonumber
   \end{align}
and
   \begin{align}\label{r:another representation on perturbation on temperature}
   \chi_{1}=\sum_{j=0}^7\sum_{[l]=j}\Big(h_{1l}e^{i\lambda_1 2^j k_1^{\perp}\cdot \big(x-v_0(\frac{l}{\mu_1})t\big)}+h_{-1l}e^{-i\lambda_1 2^j k_1^{\perp}\cdot \big(x-v_0(\frac{l}{\mu_1})t\big)}\Big).
   \end{align}
Since $c_{1\ell}(t,x)\in C^{\infty}_c(Q_{r+\delta})$, we know that for all $l\in Z^3$
 \begin{align}
 \beta_{1l}\in C^{\infty}_c(Q_{r+\delta}),\qquad h_{1l}\in C^{\infty}_c(Q_{r+\delta})\nonumber
 \end{align}
and
 \begin{align}
 \chi_{1ol},\quad \chi_{1cl},\quad \chi_{1l},\quad \chi_1\in C^{\infty}_c(Q_{r+\delta}).\nonumber
 \end{align}
Then, by (\ref{p:property on cil}), (\ref{d:definition on p}), (\ref{d:difinition on e}), (\ref{d:definition on m}) and (\ref{d:definition b}), we know that
  \begin{align}
  \|\beta_{1l}\|_0\leq \frac{M\sqrt{\delta}}{200}.\nonumber
  \end{align}
Similar to (\ref{b:bounded 3}), we have
\begin{align}\label{b:bound x}
  \|\chi_{1o}\|_0\leq \frac{M\sqrt{\delta}}{4}.
  \end{align}

\subsection{The construction of  $v_{01}$,~$p_{01}$,~$\theta_{01}$,~$R_{01}$,~$f_{01}$ }

\indent

First, we denote
 $M_1$ by
   \begin{align}
   M_1:=\sum\limits_{l\in Z^3}b^2_{1l}k_1\otimes k_1\Big(e^{2i\lambda_1 2^{[l]} k_1^{\perp}\cdot \big(x-v_0(\frac{l}{\mu_1})t\big)}
   +e^{-2i\lambda_1 2^{[l]} k_1^{\perp}\cdot \big(x-v_0(\frac{l}{\mu_1})t\big)}\Big{)}
   +\sum\limits_{l,l'\in Z^3 ,l\neq l'}w_{1ol}\otimes w_{1ol'}\nonumber
   \end{align}


 and
$N_1,K_1$ by
 \begin{align}
   N_1:=&\sum_{l\in Z^3}\Big[w_{1l}\otimes \Big(v_0-v_0\Big(\frac{l}{\mu_{1}}\Big)\Big)
   +\Big(v_0-v_0\Big(\frac{l}{\mu_{1}}\Big)\Big{)}\otimes w_{1l}\Big]+R_0-R_{0\ell},\nonumber\\
   K_1:=&\sum\limits_{l\in Z^3}\beta_{1l}b_{1l}k_1\Big(e^{2i\lambda_1 2^{[l]} k_1^{\perp}\cdot \big(x-v_0(\frac{l}{\mu_1})t\big)}
   +e^{-2i\lambda_1 2^{[l]} k_1^{\perp}\cdot \big(x-v_0(\frac{l}{\mu_1})t\big)}\Big{)}
   +\sum\limits_{l,l'\in Z^3 ,l\neq l'}w_{1ol}\chi_{1ol'}.\nonumber
    \end{align}
  Then set
 \begin{align}\label{d:definition on approximate solution}
    v_{01}(t,x):=v_0(t,x)+w_1(t,x),\quad
    p_{01}(t,x):=&p_0(t,x)-e(t,x),\quad
    \theta_{01}(t,x):=\theta_0(t,x)+\chi_1(t,x),\nonumber\\
     R_{01}(t,x):=-\bar{R}_{0\ell}(t,x)+&2\sum\limits_{l\in Z^3}b^2_{1l}(t,x)k_1\otimes k_1+\delta R_{01}(t,x),\nonumber\\
    f_{01}(t,x):=f_{0\ell}(t,x)+2&\sum\limits_{l\in Z^3}\beta_{1l}(t,x)b_{1l}(t,x)k_1+\delta f_{01}(t,x),
   \end{align}
   where
   \begin{align}
   \bar{R}_{0\ell}(t,x)=-R_{0\ell}(t,x)+e(t,x)Id,\nonumber
   \end{align}
   \begin{align}\label{d:R0small}
   \delta R_{01}:=&\mathcal{R}(\hbox{div}M_1)+N_1-\mathcal{R}(\chi_1e_2)+
   \mathcal{R}\Big\{\partial_tw_{1}
   +\hbox{div}\Big[\sum_{l\in Z^3}\Big(w_{1l}\otimes v_0\Big(\frac{l}{\mu_{1}}\Big)
   +v_0\Big(\frac{l}{\mu_{1}}\Big)\otimes w_{1l}\Big)\Big]\Big\}\nonumber\\
   &+(w_{1o}\otimes w_{1c}+w_{1c}\otimes w_{1o}+w_{1c}\otimes w_{1c})
   \end{align}
and
   \begin{align}\label{d:f0small}
   \delta f_{01}:=&\mathcal{G}(\hbox{div}K_1)
    +\mathcal{G}\Big(\partial_t\chi_{1}+\sum_{l\in Z^3}v_0\Big(\frac{l}{\mu_{1}}\Big)\cdot\nabla\chi_{1l}\Big)+w_{1o}\chi_{1c}+
    f_0-f_{0\ell}+\sum_{l\in Z^3}\Big(v_0-v_0\Big(\frac{l}{\mu_{1}}\Big)\Big)\chi_{1l}\nonumber\\
    &+w_{1c}\chi_1+\sum_{l\in Z^3}w_{1l}\Big(\theta_0-\theta_0\Big(\frac{l}{\mu_2}\Big)\Big).
  \end{align}
By (\ref{p:vanishing moment}) and (\ref{p:some integration poperty on}), we know
$$\hbox{div}M_1,~\chi_1e_2, ~\partial_tw_{1},~\hbox{div}\Big[\sum_{l\in Z^3}\Big(w_{1l}\otimes v_0\Big(\frac{l}{\mu_{1}}\Big)
   +v_0\Big(\frac{l}{\mu_{1}}\Big)\otimes w_{1l}\Big)\Big]\in\Xi,$$
   so $\delta R_{01}$ is well-defined. Notice that
   $$\hbox{div}K_1,~\partial_t\chi_{1}+\sum_{l\in Z^3}v_0\Big(\frac{l}{\mu_1}\Big)\cdot\nabla\chi_{1l}\in\Psi,$$
    thus $\delta f_{01}$ is also well-defined.
By Proposition \ref{p:proposition R} and Corollary \ref{pro:support of inverse operator}, we know that $\delta R_{01}$ is a symmetric matrix and $\delta R_{01}\in
    C_c^{\infty}(Q_{r+\delta})$. Also, by Proposition \ref{p:inverse 2} and Corollary \ref{pro:support of inverse operator}, we have $\delta f_{01}\in C_c^{\infty}(Q_{r+\delta})$.

   Obviously,
   \begin{align*}
    \hbox{div}v_{01}=\hbox{div}v_0+\hbox{div}w_{1}=0.
   \end{align*}
Moreover, from the definition of $(v_{01}, p_{01}, \theta_{01}, R_{01}, f_{01})$ and the fact that $(v_{0}, p_{0},
    \theta_0, R_{0}, f_{0})$ solves the system (\ref{d:boussinesq reynold}), together with Proposition \ref{p:proposition R} we know that
\[\begin{aligned}
    \hbox{div}R_{01}=&\hbox{div}R_0-\nabla e+\partial_tw_1-\chi_1e_2
    +\hbox{div}(w_{1o}\otimes w_{1o}+w_{1}\otimes v_{0}+v_0\otimes w_{1}\nonumber\\
    &+w_{1o}\otimes w_{1c}+w_{1c}\otimes w_{1o}+w_{1c}\otimes w_{1c})\nonumber\\
    =&\partial_tv_{0}+\hbox{div}(v_{0}\otimes v_{0})+\nabla p_{0}-\theta_0e_2-\nabla e+\partial_tw_1
    -\chi_1e_2
    +\hbox{div}(w_{1o}\otimes w_{1o}\nonumber\\
    &+w_{1}\otimes v_{0}+v_0\otimes w_{1}
    +w_{1o}\otimes w_{1c}+w_{1c}\otimes w_{1o}+w_{1c}\otimes w_{1c})\nonumber\\
    =&\partial_tv_{01}+\hbox{div}(v_{01}\otimes v_{01})+\nabla p_{01}-\theta_{01}e_2.
    \end{aligned}\]
Here we use the fact
$$\hbox{div}(M_1)+\hbox{div}\Big(2\sum\limits_{l\in Z^3}b^2_{1l}(x,t)k_1\otimes k_1)\Big)=\hbox{div}(w_{1o}\otimes w_{1o}).$$
Furthermore, by (\ref{d:definition on approximate solution}) and (\ref{d:f0small}), we have
  \[ \begin{aligned}
    f_{01}=&f_0+2\sum\limits_{l\in Z^3}\beta_{1l}b_{1l}k_1+\mathcal{G}(\hbox{div}K_1)
   +w_{1o}\chi_{1c}
   +\mathcal{G}\Big(\partial_t\chi_{1}+\sum_{l\in Z^3}v_0\Big(\frac{l}{\mu_1}\Big)\cdot\nabla\chi_{1l}\Big)\nonumber\\
   &+\sum_{l\in Z^3}\Big(v_0-v_0\Big(\frac{l}{\mu_1}\Big)\Big)\chi_{1l}+w_{1c}\chi_1+\sum_{l\in Z^3}w_{1l}\Big(\theta_0-\theta_0\Big(\frac{l}{\mu_2}\Big)\Big).\nonumber
   \end{aligned}\]
Thus, by Proposition \ref{p:inverse 2} and the fact that $(v_{0}, p_{0},\theta_0, R_{0}, f_{0})$ solves the system (\ref{d:boussinesq reynold}), we have
 \begin{align}
 \hbox{div}f_{01}=&\hbox{div}f_0+\partial_{t}\chi_1
 +\hbox{div}(w_{1o}\chi_1+w_{1c} \chi_1+v_0 \chi_1+w_1 \theta_0)\nonumber\\
 =&\hbox{div}(v_0\theta_0+w_{1o}\chi_1+w_{1c} \chi_1+v_0 \chi_1+w_1 \theta_0)
 +\partial_t(\theta_0+\chi_1)\nonumber\\
 =&\partial_t\theta_{01}+\hbox{div}(v_{01}\theta_{01}).\nonumber
 \end{align}
Here we use the fact
 \begin{align}
 \hbox{div}K_1+\hbox{div}(2\sum\limits_{l\in Z^3}\beta_{1l}(x,t)b_{1l}(x,t)k_1)=\hbox{div}(w_{1o}\chi_{1o}).\nonumber
 \end{align}
 Thus the new functions $(v_{01},p_{01},\theta_{01},R_{01},f_{01})$ solves the system (\ref{d:boussinesq reynold}).

\setcounter{equation}{0}

\section{The 1-th representations}

\indent

In this section, we will calculate the following two terms
$$-\bar{R}_{0\ell}+2\sum\limits_{l\in Z^3}b^2_{1l}k_1\otimes k_1=I$$
and
$$f_{0\ell}+2\sum\limits_{l\in Z^3}\beta_{1l}b_{1l}k_1=II.$$

\subsection{The term I}

\indent

First, by (\ref{d:definit b}) and the fact $\sum\limits_{l\in Z^3}\alpha_l^2=1$, we have
\begin{align}
2\sum\limits_{l\in Z^3}b^2_{1l}(t,x)k_1\otimes k_1
=\sum\limits_{l\in Z^3}\alpha_l^2(\mu_1t,\mu_1 x) a_1^2(t,x)k_1\otimes k_1
=a_1^2(t,x)k_1\otimes k_1.\nonumber
\end{align}
Thus, by (\ref{d:decomposition reynold}),
\begin{align}
-\bar{R}_{0\ell}(x,t)+2\sum\limits_{l\in Z^3}b^2_{1l}(x,t)k_1\otimes k_1
=-\sum_{i=2}^3 a_i^2(t,x)k_i\otimes k_i.\nonumber
\end{align}
Meanwhile, we have
\begin{align}\label{f:form R1}
R_{01}=-\sum_{i=2}^3 a_i^2(t,x)k_i\otimes k_i+\delta R_{01}.
\end{align}
In next section, we will prove that $\delta R_{01}$ is small.

\subsection{The term II}

\indent

By (\ref{d:definit b}) and (\ref{d:definition b}), we have
\begin{align}
2\sum\limits_{l\in Z^3}\beta_{1l}(t,x)b_{1l}(t,x)k_1
=\sum\limits_{l\in Z^3}\alpha_l^2(\mu_1t,\mu_1 x)c_1(t,x)k_1
=c_1(t,x)k_1.\nonumber
\end{align}
By (\ref{p:split 2}),
\begin{align}
f_{0\ell}+2\sum\limits_{l\in Z^3}\beta_{1l}(x,t)b_{1l}(x,t)k_1=-c_2(t,x)k_2.\nonumber
\end{align}
Meanwhile, we have
\begin{align}\label{f:form g1}
f_{01}(t,x)
=-c_2(t,x)k_2+\delta f_{01}.
\end{align}
Again in next section, we will prove that $\delta f_{01}$ is small.\\
\setcounter{equation}{0}
 \section{Estimates on $\delta R_{01}$ and $\delta f_{01}$}

 \indent
In the subsequent estimates, unless otherwise stated, $C_0$ always denotes an absolute constant which only depends on  $r, \|v_0\|_0$ linearly and $C_m$ will in addition to depend on $m$
and both them can vary from line to line.

In the following, we frequently use the elementary inequality
 \begin{align}
[fg]_{m}\leq& C_m\bigl([f]_{m}\|g\|_0+[g]_{m}\|f\|_0\bigr)\label{i:inequality 1}
\end{align}
for any $m\geq0$.\\
Moreover, by the standard estimates on convolution,
\begin{align}
\|c_{i\ell}\|_{C^m_{t,x}}+\|R_{0\ell}\|_{C^m_{t,x}}\leq& C_m\Lambda \ell^{1-m}\quad {\rm for~~any~~m\geq1},~~~ i=1,2,3.\label{e:estimate higher convolution}\\
\|R_{0\ell}-R_0\|_0+\|f_{0\ell}-f_0\|_0\leq& C_0\Lambda \ell.\label{e:estimate different}
\end{align}
Then, we collect a classical estimates on the H\"{o}lder norms of compositions, their proof can be found in \cite{CDL3}:\\
Let $u: R^n\rightarrow R^N$ and $\Psi: R^N\rightarrow R$ be two smooth functions. Then, for every $m\in {\rm N}\setminus {0}$ there is a constant $C_m=C_m(N,n)$ such that
\begin{align}
[\Psi(u)]_m\leq& C_m\sum\limits_{i=1}^m[\Psi]_i[u]_1^{(i-1)\frac{m}{m-1}}[u]_m^{\frac{m-i}{m-1}}.\label{i:composition inequality 2}
\end{align}
In particular,
$$[\Psi(u)]_1\leq C_1[\Psi]_1[u]_1.$$
We summarize the main estimates on $b_{1l}$ and $\beta_{1l}$ in the following lemma.
\begin{Lemma}\label{l:estimate osc}
For any $l\in Z^3$ and integer $m\geq 1$, we have
\begin{align}
\|b_{1l}\|_m+\|\beta_{1l}\|_m\leq& C_m\sqrt{\delta}(\mu_1^m+\mu_1\ell^{-(m-1)}),\label{e:estimate on main perturbation}\\
\|\partial_tb_{1l}\|_m+\|\partial_t\beta_{1l}\|_m\leq& C_m\sqrt{\delta}(\mu_1^{m+1}+\mu_1\ell^{-m}),\label{e:estimate on time derivative}\\
\|\partial_{tt}b_{1l}\|_m+\|\partial_{tt}\beta_{1l}\|_m\leq& C_m\sqrt{\delta}(\mu_1^{m+2}+\mu_1\ell^{-m-1}),\label{e:estimate on two time order}\\
\|k_{\pm 1l}\|_m+\|h_{\pm 1l}\|_m\leq& C_m\sqrt{\delta}(\mu_1^m+\mu_1\ell^{-(m-1)}),\label{e:estimate on transport amp}\\
\|\partial_tk_{\pm 1l}\|_m+\|\partial_th_{\pm 1l}\|_m\leq& C_m\sqrt{\delta}(\mu_1^{m+1}+\mu_1\ell^{-m}),\label{e:estimate on transport time derivative}\\
\|\partial_{tt}k_{\pm 1l}\|_m+\|\partial_{tt}h_{\pm 1l}\|_m\leq &C_m\sqrt{\delta}(\mu_1^{m+2}+\mu_1\ell^{-m-1})\label{e:estimate on b1l}
\end{align}
and
\begin{align}
\|b_{1l}\|_0+\|\beta_{1l}\|_0\leq& C_0\sqrt{\delta},\label{e:zero estimate on main perturbation}\\
\|\partial_tb_{1l}\|_0+\|\partial_t\beta_{1l}\|_0\leq& C_0\sqrt{\delta}\mu_1,\label{e:zero estimate on time derivative}\\
\|\partial_{tt}b_{1l}\|_0+\|\partial_{tt}\beta_{1l}\|_0\leq& C_0\sqrt{\delta}(\mu_1^{2}+\mu_1\ell^{-1}),\label{e:zero estimate on two time order}\\
\|k_{\pm 1l}\|_0+\|h_{\pm 1l}\|_0\leq& C_0\sqrt{\delta},\label{e:zero estimate on transport amp}\\
\|\partial_tk_{\pm 1l}\|_0+\|\partial_th_{\pm 1l}\|_0\leq& C_0\sqrt{\delta}\mu_1,\label{e:zero estimate on transport time derivative}\\
\|\partial_{tt}k_{\pm 1l}\|_0+\|\partial_{tt}h_{\pm 1l}\|_0\leq &C_0\sqrt{\delta}(\mu_1^{2}+\mu_1\ell^{-1}).\label{e:zero estimate on b1l}
\end{align}
\begin{proof}
First, notice the fact $\{(t,x)|\nabla e\neq0\}\cap \{(t,x)|R_{0\ell}\neq 0\}=\emptyset$, we have, for any positive integer $i$,
$$\nabla^i\Big(\frac{R_{0\ell}}{e}\Big)=\frac{\nabla^i( R_{0\ell})}{e},$$
thus for $m\geq 1$, by (\ref{d:definition on p}), (\ref{d:difinition on e}), (\ref{e:estimate higher convolution}), (\ref{i:composition inequality 2}) and parameter assumption (\ref{a:assumption on parameter})
\begin{align}
\Big[\gamma_{k_1}\Big(Id-\frac{R_{0\ell}}{e}\Big)(t,\cdot)\Big]_m\leq & C_m\sum\limits_{i=1}^m\|\nabla^i\gamma_{k_1}\|_0\Big[Id-\frac{R_{0\ell}}{e}(t,\cdot)\Big]_1^{(i-1)
\frac{m}{m-1}}\Big[Id-\frac{R_{0\ell}}{e}(t,\cdot)\Big]_m^{\frac{m-i}{m-1}}\nonumber\\
\leq & C_m(\mu_1^m+\mu_1\ell^{-(m-1)}).\nonumber
\end{align}
It's obvious that
\begin{align}
\Big\|\gamma_{k_1}\Big(Id-\frac{R_{0\ell}}{e}\Big)\Big\|_0
\leq & C_0.\nonumber
\end{align}
Moreover, by (\ref{d:definition on p}) and parameter assumption (\ref{a:assumption on parameter}), for any integer $m$,
\begin{align}
 \|\rho\|_{C^m_{t,x}}\leq C_m\delta^{-\frac{1}{2}-(m-1)}\leq C_m\sqrt{\delta}\mu_1^m.\nonumber
\end{align}
Then, recalling that
 \begin{align}
    b_{1l}(t,x):=\frac{\rho(t,x)}{\sqrt{2}}\gamma_{k_1}\Big(Id-\frac{R_{0\ell}}{e}\Big)(t,x)\alpha_l(\mu_1t,\mu_1x),\nonumber
   \end{align}
thus, for $m\geq 1$, by (\ref{i:inequality 1}) and parameter assumption (\ref{a:assumption on parameter}),
it's easy to get
\begin{align}
\sup_t\big[b_{1l}(t,\cdot)\big]_m\leq C_m\sqrt{\delta}(\mu_1^m+\mu_1\ell^{-(m-1)}).\nonumber
\end{align}
By (\ref{e:estimate higher convolution}) and parameter assumption (\ref{a:assumption on parameter}), for any integer $m\geq1$,
$$\|c_{1\ell}\|_{C^m_{t,x}}\leq C_m\Lambda\ell^{-(m-1)}\leq C_m\delta\mu_1\ell^{-(m-1)}.$$
By (\ref{p:property on cil}), $\|c_{1\ell}\|_0\leq 2\delta$. By  (\ref{d:definition on p}), (\ref{d:difinition on e}), (\ref{i:composition inequality 2}) and parameter assumption (\ref{a:assumption on parameter}), for any integer $m$,
\begin{align}
 \Big\|\frac{1}{\sqrt{e}}\Big\|_{C^m_{t,x}}\leq C_m\delta^{-\frac{1}{2}-m}\leq C_m\delta^{-\frac{1}{2}}\mu_1^m.\nonumber
\end{align}
Notice that $\frac{1}{2}\leq \gamma_{k_1}\leq \frac{3}{2}$, then, by (\ref{d:definition b}), (\ref{i:inequality 1}) and parameter assumption (\ref{a:assumption on parameter}), we also have
\begin{align}
\|\beta_{1l}\|_m\leq C_m\sqrt{\delta}(\mu_1^m+\mu_1\ell^{-(m-1)}).\nonumber
\end{align}
Thus, we complete the proof of (\ref{e:estimate on main perturbation}). (\ref{e:zero estimate on main perturbation}) is a direct result of (\ref{p:property on cil})-(\ref{d:difinition on e}).\\
By the definition (\ref{d:difinition on k1l}) on $k_{\pm 1l}$, the definition (\ref{d:difinition on h1l}) on $h_{\pm 1l}$, estimate (\ref{e:estimate on main perturbation}) and parameter assumption (\ref{a:assumption on parameter}), for $m\geq 1$, it's easy to obtain
\begin{align}
\|k_{\pm 1l}\|_m\leq C_m\sqrt{\delta}(\mu_1^m+\mu_1\ell^{-(m-1)}),\quad
\|h_{\pm 1l}\|_m\leq C_m\sqrt{\delta}(\mu_1^m+\mu_1\ell^{-(m-1)}).\nonumber
\end{align}
Thus, we complete the proof of (\ref{e:estimate on transport amp}). The case of $m=0$ in (\ref{e:zero estimate on transport amp}) is also direct.\\
We introduce function
$$\Gamma(t,x)=\frac{\rho(t,x)}{\sqrt{2}}\gamma_{k_1}\Big(Id-\frac{R_{0\ell}}{e}\Big)(t,x).$$
By (\ref{d:definition on p}), (\ref{d:difinition on e}) and parameter assumption (\ref{a:assumption on parameter})  and notice the fact
$$\partial_t\Big(\frac{R_{0\ell}}{e}\Big)=\frac{(\partial_t R_0)_{\ell}}{e}-\frac{R_{0\ell}\partial_te}{e^2}=\frac{(\partial_t R_0)_{\ell}}{e},\quad
\partial_{tt}\Big(\frac{R_{0\ell}}{e}\Big)=\frac{\partial_t R_0\ast(\partial_t\varphi)_{\ell}\ell^{-1}}{e}$$ we have
\begin{align}
\|\partial_t\Gamma\|_0\leq C_0\sqrt{\delta}\mu_1,\quad
\|\partial_{tt}\Gamma\|_0\leq C_0\sqrt{\delta}(\mu_1^2+\mu_1\ell^{-1}).\nonumber
\end{align}
Moreover, by (\ref{d:definition on p}), (\ref{i:inequality 1}), (\ref{e:estimate higher convolution}), (\ref{i:composition inequality 2}) and parameter assumption (\ref{a:assumption on parameter}), for $m\geq 1$ we have
\begin{align}
\|\partial_t\Gamma\|_m\leq C_m\sqrt{\delta}(\mu_1^{m+1}+\mu_1\ell^{-m}),\quad
\|\partial_{tt}\Gamma\|_m\leq C_m\sqrt{\delta}(\mu_1^{m+2}+\mu_1\ell^{-m-1}).\nonumber
\end{align}
Observe that
$$b_{1l}(t,x)=\Gamma(t,x)\alpha_l(\mu_1t, \mu_1x),$$
thus
\begin{align}
\partial_tb_{1l}=&\partial_t\Gamma \alpha_l(\mu_1t, \mu_1x)+\mu_1\Gamma (\partial_t\alpha)_l(\mu_1t,\mu_1x),\nonumber\\
\partial_{tt}b_{1l}=&\partial_{tt}\Gamma \alpha_l(\mu_1t, \mu_1x)+2\mu_1\partial_t\Gamma (\partial_t\alpha)_l(\mu_1t, \mu_1x)+
\mu_1^2\Gamma (\partial_{tt}\alpha)_l(\mu_1t,\mu_1x).\nonumber
\end{align}
Hence, by (\ref{i:inequality 1}) and the above estimate on $\Gamma$, we obtain
\begin{align}\label{e: infinity estimate}
\|\partial_tb_{1l}\|_0\leq C_0\sqrt{\delta}\mu_1,\quad
\|\partial_{tt}b_{1l}\|_0\leq C_0\sqrt{\delta}(\mu_1^2+\mu_1\ell^{-1})
\end{align}
and
\begin{align}
\|\partial_tb_{1l}\|_m\leq C_m\sqrt{\delta}(\mu_1\ell^{-m}+\mu_1^{m+1}),\quad
\|\partial_{tt}b_{1l}\|_m\leq C_m\sqrt{\delta}(\mu_1\ell^{-m-1}+\mu_1^{m+2}).\nonumber
\end{align}
The same argument gives
\begin{align}
\|\partial_t\beta_{1l}\|_0\leq& C_0\sqrt{\delta}\mu_1,\quad\quad\quad
\|\partial_{tt}\beta_{1l}\|_0\leq C_0\sqrt{\delta}(\mu_1^2+\mu_1\ell^{-1}),\label{e: infinity bil}\\
\|\partial_t\beta_{1l}\|_m\leq& C_m\sqrt{\delta}(\mu_1\ell^{-m}+\mu_1^{m+1}),\quad
\|\partial_{tt}\beta_{1l}\|_m\leq C_m\sqrt{\delta}(\mu_1\ell^{-m-1}+\mu_1^{m+2}).\nonumber
\end{align}
Thus, we obtain (\ref{e:estimate on time derivative}), (\ref{e:estimate on two time order}), (\ref{e:zero estimate on time derivative}) and (\ref{e:zero estimate on two time order}).
Then, by the definition (\ref{d:difinition on k1l}) on $k_{\pm 1l}$, the definition (\ref{d:difinition on h1l}) on $h_{\pm 1l}$ and parameter assumption (\ref{a:assumption on parameter}), it's easy to obtain (\ref{e:estimate on transport time derivative}), (\ref{e:estimate on b1l}), (\ref{e:zero estimate on transport time derivative}) and (\ref{e:zero estimate on b1l}).
Thus, the proof of this lemma is complete.
\end{proof}
\end{Lemma}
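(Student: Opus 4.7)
The plan is to factor out the rescaled cutoff and reduce everything to estimates on the slow part. Write $b_{1l}(t,x)=\Gamma(t,x)\alpha_l(\mu_1 t,\mu_1 x)$ with $\Gamma=\tfrac{1}{\sqrt 2}\rho\,\gamma_{k_1}\!\bigl(Id-R_{0\ell}/e\bigr)$, and analogously $\beta_{1l}=\widetilde\Gamma\,\alpha_l(\mu_1 t,\mu_1 x)$ with $\widetilde\Gamma=c_{1\ell}/\bigl[\sqrt{2e}\,\gamma_{k_1}(Id-R_{0\ell}/e)\bigr]$. Since every spatial/time derivative of $\alpha_l(\mu_1\cdot,\mu_1\cdot)$ produces a factor $\mu_1$, the product rule (6.1) gives
\begin{equation*}
\|b_{1l}\|_m\le C_m\sum_{j=0}^m \mu_1^{\,m-j}\,\|\Gamma\|_j,\qquad \|\partial_t^\sigma b_{1l}\|_m\le C_m\sum_{j+k\le m+\sigma} \mu_1^{\,m+\sigma-j-k}\,\|\partial_t^k\Gamma\|_j,
\end{equation*}
so the whole task reduces to bounding derivatives of $\Gamma$ and $\widetilde\Gamma$.

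For $\Gamma$, the crucial observation (already noted in the excerpt) is that $\nabla e\equiv 0$ on $\{R_{0\ell}\ne 0\}$, hence $\nabla^i(R_{0\ell}/e)=\nabla^i R_{0\ell}/e$. Feeding this into the composition inequality (6.3) applied to $\gamma_{k_1}$, using the mollifier bound $\|R_{0\ell}\|_m\le C_m\Lambda\ell^{1-m}$ from (6.4), and combining with $\|\rho\|_{C^m_{t,x}}\le C_m\delta^{-\tfrac12-(m-1)}$ from (4.6), yields $[\Gamma(t,\cdot)]_m\le C_m\sqrt\delta(\mu_1^m+\mu_1\ell^{-(m-1)})$ after invoking the parameter assumptions $\mu_1\ge\Lambda/\delta$ and $\ell^{-1}\ge\Lambda/(\eta\delta)$ (which absorb stray $\Lambda$ and $\delta^{-m}$ factors into $\sqrt\delta\mu_1^m$). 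Plugging this into the product rule above reproduces (6.5). The estimate for $\widetilde\Gamma$ is obtained identically after bounding $1/\sqrt e$ by (6.3) and using $\|c_{1\ell}\|_m\le C_m\Lambda\ell^{1-m}$ together with $\|c_{1\ell}\|_0\le 2\delta$ from (4.7).

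For the time-derivative bounds (6.6)–(6.7) and their zero-order versions, the trick is to note that differentiation in time commutes with mollification, so $\partial_t R_{0\ell}=(\partial_t R_0)\ast\varphi_\ell$ retains the bound $\Lambda$ in $L^\infty$, while the second time derivative is handled as $\partial_{tt}R_{0\ell}=(\partial_t R_0)\ast\partial_t\varphi_\ell$, costing one extra $\ell^{-1}$. Applying again the composition inequality to $\partial_t[\gamma_{k_1}(Id-R_{0\ell}/e)]=\gamma_{k_1}'(\cdots)\cdot(-\partial_t R_{0\ell}/e)$ and its spatial derivatives, then multiplying by $\rho$ (whose time derivatives satisfy $\|\partial_t^\sigma\rho\|_{C^m_{t,x}}\le C_m\delta^{-\tfrac12-(m+\sigma-1)}$), produces the announced bounds $\|\partial_t\Gamma\|_m\le C_m\sqrt\delta(\mu_1^{m+1}+\mu_1\ell^{-m})$ and $\|\partial_{tt}\Gamma\|_m\le C_m\sqrt\delta(\mu_1^{m+2}+\mu_1\ell^{-m-1})$; the $\alpha_l(\mu_1\cdot,\mu_1\cdot)$ factor then contributes at most a further $\mu_1^\sigma$, which is already built into the stated estimate.

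Finally, (6.8)–(6.10) and (6.13)–(6.15) follow directly: from the explicit formulas (4.24) and (4.36), $k_{\pm 1l}$ and $h_{\pm 1l}$ equal $b_{1l}k_1$ (resp.\ $\beta_{1l}$) plus correction terms of the form $\lambda_1^{-1}2^{-[l]}(\text{derivatives of }b_{1l}\text{ or }\beta_{1l})$ and $\lambda_1^{-2}2^{-2[l]}(\text{second derivatives})$. Since $[l]\ge 0$ and the parameter assumption $\lambda_1\ge\mu_1^{1+\varepsilon}\ge\mu_1$ guarantees the $\lambda_1^{-1}$ gain dominates any frequency cost produced by differentiating $b_{1l}$ or $\beta_{1l}$, these correction terms are at most of the same order as the leading one. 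The main technical obstacle is therefore purely bookkeeping: making sure that the terms $\mu_1\ell^{-(m-1)}$ coming from differentiating the mollified stresses and those $\mu_1^m$ coming from the rescaled cutoff are kept separately but both absorbed under the uniform $\sqrt\delta$ prefactor using only the parameter hierarchy (4.3), without ever having to commute $\nabla$ with division by $e$ in a region where $\nabla e\ne 0$.
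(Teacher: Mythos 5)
Your proposal is correct and follows essentially the same route as the paper: factor $b_{1l}=\Gamma\,\alpha_l(\mu_1\cdot)$ and $\beta_{1l}=\widetilde\Gamma\,\alpha_l(\mu_1\cdot)$, use the disjoint-support observation $\nabla^i(R_{0\ell}/e)=\nabla^i R_{0\ell}/e$ (and its time-derivative analogue), apply the composition inequality \eqref{i:composition inequality 2} together with the mollifier bounds \eqref{e:estimate higher convolution} and the $\rho$ bounds of \eqref{d:definition on p}, and then absorb everything via the parameter hierarchy \eqref{a:assumption on parameter}. The treatment of $k_{\pm 1l},h_{\pm 1l}$ via the explicit formulas and $\lambda_1\geq\mu_1^{1+\varepsilon}$ likewise matches the paper's argument.
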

Next, we give estimates on perturbations $w_{1o}, w_{1c}, \chi_{1o}, \chi_{1c}$.
 \begin{Lemma}[Estimate on main perturbation and correction]\label{e:estimate on perturbation and correction}
 \begin{align}
 \|w_{1o}\|_0\leq &C_0\sqrt{\delta},\quad  \|w_{1o}\|_{C^1_{t,x}}\leq C_0\sqrt{\delta}\lambda_1,\quad  \|\chi_{1o}\|_0\leq C_0\sqrt{\delta},\quad  \|\chi_{1o}\|_{C^1_{t,x}}\leq C_0\sqrt{\delta}\lambda_1,\label{e:estimate main pertuebations}\\
 \|w_{1c}\|_0\leq& C_0\frac{\sqrt{\delta}\mu_1}{\lambda_1},\quad  \|w_{1c}\|_{C^1_{t,x}}\leq C_0\sqrt{\delta}\mu_1,\quad
\|\chi_{1c}\|_0\leq C_0\frac{\sqrt{\delta}\mu_1}{\lambda_1},\quad  \|\chi_{1c}\|_{C^1_{t,x}}\leq C_0\sqrt{\delta}\mu_1.\label{e:estimate on correction}
\end{align}
 \begin{proof}
  First, by (\ref{b:bounded 3}) and (\ref{b:bound x}), we know $\|w_{1o}\|_0\leq C_0\sqrt{\delta}, \quad\|\chi_{1o}\|_0\leq C_0\sqrt{\delta}$. Since
 \begin{align}
 \partial_tw_{1o}=&\sum_{l\in Z^3}\partial_tb_{1l}k_1\Big(e^{i\lambda_1 2^{[l]} k_1^{\perp}\cdot \big((y,z)-v_0(\frac{l}{\mu_1})t\big)}+e^{-i\lambda_1 2^{[l]} k_1^{\perp}\cdot \big((y,z)-v_0(\frac{l}{\mu_1})t\big)}\Big)\nonumber\\
 &-\sum_{l\in Z^3}b_{1l}i\lambda_1 2^{[l]} k_1^{\perp}\cdot v_0\Big(\frac{l}{\mu_1}\Big)k_1\Big(e^{i\lambda_1 2^{[l]} k_1^{\perp}\cdot \big((y,z)-v_0(\frac{l}{\mu_1})t\big)}-e^{-i\lambda_1 2^{[l]} k_1^{\perp}\cdot \big((y,z)-v_0(\frac{l}{\mu_1})t\big)}\Big),\nonumber
 \end{align}
 thus, by (\ref{e:zero estimate on main perturbation}), (\ref{e:zero estimate on time derivative}) and parameter assumption (\ref{a:assumption on parameter}), we obtain
 \begin{align}
 \|\partial_tw_{1o}\|_0\leq C_0\sqrt{\delta}\lambda_1.\nonumber
\end{align}
The same argument gives
\begin{align}
 \|\nabla w_{1o}\|_0\leq C_0\sqrt{\delta}\lambda_1,\quad  \|\chi_{1o}\|_{C^1_{t,x}}\leq C_0\sqrt{\delta}\lambda_1.\nonumber
\end{align}
Thus, we give a proof of (\ref{e:estimate main pertuebations}).\\
Next, by (\ref{d:definition w 1ocl}), (\ref{e:estimate on main perturbation}), parameter assumption (\ref{a:assumption on parameter}), we get
 \begin{align}
 \|w_{1cl}\|_0
 \leq C_0\frac{\sqrt{\delta}\mu_1}{\lambda_1}.\nonumber
 \end{align}
 Thus, from the property (\ref{p:unity}) on $\alpha_l$, we arrive at
 \begin{align}
 \|w_{1c}\|_0\leq C_0\frac{\sqrt{\delta}\mu_1}{\lambda_1}.\nonumber
 \end{align}
Differentiating (\ref{d:definition w 1ocl}) in time
\begin{align}
 \partial_tw_{1cl}=&\sum_{l\in Z^3}\Big\{\frac{\nabla^{\perp}\partial_tb_{1l}}{i\lambda_1 2^{[l]}}\Big(e^{i\lambda_1 2^{[l]} k_1^{\perp}\cdot \big(x-v_{0}(\frac{l}{\mu_1})t\big)}-e^{-i\lambda_1 2^{[l]} k_1^{\perp}\cdot \big(x-v_{0}(\frac{l}{\mu_1})t\big)}\Big)\nonumber\\
 &-\nabla^{\perp}b_{1l}k_1^{\perp}\cdot v_{0}\Big(\frac{l}{\mu_1}\Big)\Big(e^{i\lambda_1 2^{[l]} k_1^{\perp}\cdot \big(x-v_{0}(\frac{l}{\mu_1})t\big)}+e^{-i\lambda_1 2^{[l]} k_1^{\perp}\cdot \big(x-v_{0}(\frac{l}{\mu_1})t\big)}\Big)\nonumber\\
   &-\nabla^{\bot}\Big(\frac{\nabla \partial_tb_{1l}\cdot k_1^{\perp}}{\lambda_1^22^{2[l]}|k_1|^2}\Big(e^{i\lambda_1 2^{[l]} k_1^{\perp}\cdot \big(x-v_{0}(\frac{l}{\mu_1})t\big)}+e^{-i\lambda_1 2^{[l]} k_1^{\perp}\cdot \big(x-v_{0}(\frac{l}{\mu_1})t\big)}\Big)\nonumber\\
   &-\nabla^{\bot}\Big(\frac{\nabla b_{1l}\cdot k_1^{\perp}}{i\lambda_12^{[l]}|k_1|^2}k_1^{\perp}\cdot v_{0}\Big(\frac{l}{\mu_1}\Big)\Big(e^{i\lambda_1 2^{[l]} k_1^{\perp}\cdot \big(x-v_{0}(\frac{l}{\mu_1})t\big)}-e^{-i\lambda_1 2^{[l]} k_1^{\perp}\cdot \big(x-v_{0}(\frac{l}{\mu_1})t\big)}\Big)\Big\}.\nonumber
 \end{align}
   By (\ref{e:estimate on main perturbation}), (\ref{e:estimate on time derivative}) and parameter assumption (\ref{a:assumption on parameter}), we get
   \begin{align}
 \|\partial_tw_{1c}\|_0\leq C_0\sqrt{\delta}\mu_1.\nonumber
 \end{align}
 Similarly, we have
 \begin{align}
 \|\nabla w_{1c}\|_0\leq C_0\sqrt{\delta}\mu_1,\quad  \|\chi_{1c}\|_{C^1_{t,x}}\leq C_0\sqrt{\delta}\mu_1.\nonumber
 \end{align}
 Collect the above estimates, we complete the proof of (\ref{e:estimate on correction}).
 \end{proof}
 \end{Lemma}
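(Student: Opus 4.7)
The plan is to prove the six asserted bounds directly from the explicit formulas for $w_{1o}, w_{1c}, \chi_{1o}, \chi_{1c}$ together with the amplitude estimates of Lemma 6.1 and the parameter assumption \eqref{a:assumption on parameter}. The sup norm bounds on the oscillatory pieces $w_{1o}$ and $\chi_{1o}$ are already recorded: they follow from the pointwise estimates on $b_{1l}$ and $\beta_{1l}$ in \eqref{b:bound on componence 1} and the analogous bound on $\|\beta_{1l}\|_0$, combined with the finite-overlap property of the partition of unity $\{\alpha_l\}$ (at most $30$ nonzero summands at each point), as already recorded in \eqref{b:bounded 3} and \eqref{b:bound x}.

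For the $C^1$ bounds on the main perturbations, I would differentiate
\[
w_{1o} = \sum_{l \in Z^3} b_{1l}\, k_1 \Big(e^{i\lambda_1 2^{[l]} k_1^{\perp}\cdot (x - v_0(l/\mu_1)t)} + e^{-i\lambda_1 2^{[l]} k_1^{\perp}\cdot (x - v_0(l/\mu_1)t)}\Big)
\]
in $x$ and $t$. Spatial derivatives produce either $\partial b_{1l}$ (size $\sqrt{\delta}\mu_1$ by \eqref{e:estimate on main perturbation}) or the phase factor $\lambda_1 2^{[l]} k_1^\perp$ (size $\lambda_1$) times $b_{1l}$ (size $\sqrt{\delta}$). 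Time derivatives produce either $\partial_t b_{1l}$ (size $\sqrt{\delta}\mu_1$ by \eqref{e:zero estimate on time derivative}) or the phase factor $\lambda_1 2^{[l]} k_1^\perp \cdot v_0(l/\mu_1)$ times $b_{1l}$, controlled by $\lambda_1 \|v_0\|_0 \sqrt{\delta}$. Since $\lambda_1 \geq \mu_1$ by the parameter assumption, the dominant contribution in both cases is $C_0\sqrt{\delta}\lambda_1$, yielding $\|w_{1o}\|_{C^1_{t,x}} \leq C_0\sqrt{\delta}\lambda_1$. The same calculation applied to $\chi_{1o}$, with $\beta_{1l}$ in place of $b_{1l}$, gives the corresponding bound.

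For the corrections, I would use the explicit formula \eqref{d:definition w 1ocl}: the leading piece is $\nabla^\perp b_{1l} / (i\lambda_1 2^{[l]})$ times an exponential, giving $\|w_{1cl}\|_0 \lesssim \|\nabla b_{1l}\|_0/\lambda_1 \lesssim \sqrt{\delta}\mu_1/\lambda_1$ by \eqref{e:estimate on main perturbation}; the second-order piece carries an extra $1/\lambda_1$ but gains $\|\nabla^2 b_{1l}\|_0 \lesssim \sqrt{\delta}\mu_1^2$, so under $\lambda_1 \geq \mu_1$ it is of lower order. Summing over $l$ with finite overlap yields $\|w_{1c}\|_0 \leq C_0 \sqrt{\delta}\mu_1/\lambda_1$. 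For $\|w_{1c}\|_{C^1_{t,x}}$, I would again differentiate: a spatial derivative of the leading piece either hits the amplitude (yielding $\|\nabla^2 b_{1l}\|_0/\lambda_1 \lesssim \sqrt{\delta}\mu_1^2/\lambda_1 \leq \sqrt{\delta}\mu_1$ using $\lambda_1 \geq \mu_1$) or hits the exponential (yielding $\lambda_1 \cdot \|\nabla b_{1l}\|_0/\lambda_1 = \|\nabla b_{1l}\|_0 \lesssim \sqrt{\delta}\mu_1$); similarly for the time derivative, using \eqref{e:estimate on time derivative} for $\partial_t \nabla b_{1l}$ and $\|v_0\|_0$ times the spatial scale. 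The totals collapse to $C_0\sqrt{\delta}\mu_1$. The analogous computation, applied term by term to the explicit formula for $\chi_{1c}$ with $\beta_{1l}$ in place of $b_{1l}$, gives the matching bound for $\chi_{1c}$.

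The only minor subtlety, and the step I expect to require the most care, is the bookkeeping in the time derivative: each phase carries the transport factor $v_0(l/\mu_1)$, which is bounded uniformly by $\|v_0\|_0$, but one must track that the absolute constant $C_0$ is allowed to depend on $\|v_0\|_0$ as declared at the top of Section~6. Once that dependence is absorbed into $C_0$, and the parameter inequalities $\mu_1 \leq \lambda_1$ and $\ell^{-1} \leq \lambda_1$ from \eqref{a:assumption on parameter} are used to suppress the lower-order contributions in the correction terms, all six inequalities drop out directly from Lemma 6.1. No genuinely new idea is needed beyond careful term-by-term differentiation and the triangle inequality applied with finite overlap.
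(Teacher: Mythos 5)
Your proposal matches the paper's proof essentially line by line: the $C^0$ bounds are quoted from \eqref{b:bounded 3}, \eqref{b:bound x}, the $C^1$ bounds follow by differentiating each $l$-summand in time and space, splitting into amplitude and phase contributions, and invoking the bounds of Lemma~\ref{l:estimate osc} together with the parameter inequalities \eqref{a:assumption on parameter} and the finite-overlap property of $\{\alpha_l\}$. The only place you understate the bookkeeping is in writing $\|\nabla^2 b_{1l}\|_0 \lesssim \sqrt{\delta}\mu_1^2$ rather than the full $\sqrt{\delta}(\mu_1^2+\mu_1\ell^{-1})$ from \eqref{e:estimate on main perturbation}, but you flag the needed inequality $\ell^{-1}\le\lambda_1$ at the end, so the argument closes correctly and is the same as the paper's.
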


By (\ref{d:definition on p}), (\ref{d:difinition on e}), (\ref{b:bounded 3}), (\ref{b:bound x}), (\ref{d:definition on approximate solution}) and lemma \ref{e:estimate on perturbation and correction}, it's easy to obtain the following estimate:
\begin{Corollary}
\begin{align}\label{e:differerce first estimate}
\|v_{01}-v_0\|_0\leq \frac{M\sqrt{\delta}}{6}+C_0\frac{\sqrt{\delta}\mu_1}{\lambda_1},\quad
\|p_{01}-p_0\|_0\leq& M\delta,\quad\|\theta_{01}-\theta_0\|_0\leq \frac{M\sqrt{\delta}}{4}+C_0\frac{\sqrt{\delta}\mu_1}{\lambda_1},\nonumber\\
\|v_{01}-v_0\|_{C^1_{t,x}}\leq C_0\lambda_1\sqrt{\delta},\qquad
\|p_{01}-p_0\|_{C^1_{t,x}}\leq& C_0,\quad \|\theta_{01}-\theta_0\|_{C^1_{t,x}}\leq C_0\lambda_1\sqrt{\delta}.
\end{align}
\end{Corollary}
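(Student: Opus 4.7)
The plan is to unpack the definitions in \eqref{d:definition on approximate solution} and then invoke the estimates already collected in Lemma \ref{e:estimate on perturbation and correction}, together with the bound on $e$ coming from \eqref{d:definition on p}--\eqref{d:difinition on e}. Since
\[
v_{01}-v_0 = w_1 = w_{1o}+w_{1c},\qquad \theta_{01}-\theta_0 = \chi_1 = \chi_{1o}+\chi_{1c},\qquad p_{01}-p_0 = -e,
\]
all six inequalities reduce to the triangle inequality applied to the perturbations and to an independent bound on $e$.

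First I would handle the $C^0$ bounds for the velocity and temperature. Writing $\|v_{01}-v_0\|_0\leq \|w_{1o}\|_0+\|w_{1c}\|_0$ and inserting \eqref{b:bounded 3} together with the $C^0$ estimate $\|w_{1c}\|_0\leq C_0\sqrt{\delta}\mu_1/\lambda_1$ from \eqref{e:estimate on correction} yields the claimed bound. The argument for $\theta_{01}-\theta_0$ is identical, using \eqref{b:bound x} in place of \eqref{b:bounded 3}. For the pressure, $\|p_{01}-p_0\|_0=\|e\|_0=\|\rho^2\|_0\leq 2\delta$ by \eqref{d:definition on p} and \eqref{d:difinition on e}, which is absorbed by $M\delta$ since the definition \eqref{d:definition on m} enforces $M\geq C_0\geq 2$.

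Next I would treat the $C^1_{t,x}$ bounds. For the velocity and temperature these follow immediately from the second halves of \eqref{e:estimate main pertuebations} and \eqref{e:estimate on correction}, together with the parameter assumption \eqref{a:assumption on parameter} which gives $\mu_1\leq \lambda_1$ so that the contribution $\sqrt{\delta}\mu_1$ from the corrections is dominated by $\sqrt{\delta}\lambda_1$. For the pressure, since $e=\rho^2$ one has $\partial e = 2\rho\,\partial\rho$, and the bound $\|\rho\|_0\leq \sqrt{2\delta}$ combined with $\|\rho\|_{C^1_{t,x}}\leq C_0\delta^{-\frac12}$ from \eqref{d:definition on p} gives $\|\partial e\|_0\leq C_0$; since $\delta\leq 1$, we also have $\|e\|_0\leq C_0$, so $\|p_{01}-p_0\|_{C^1_{t,x}}=\|e\|_{C^1_{t,x}}\leq C_0$.

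There is no real obstacle here; the corollary is a bookkeeping statement that collects the bounds produced in the previous two lemmas. The only point requiring a small verification is the $C^1$ estimate on $e$, which uses the cancellation $\partial(\rho^2)=2\rho\,\partial\rho$ to convert the blowup $\delta^{-1/2}$ of $\partial\rho$ into the factor $\sqrt{\delta}\cdot\delta^{-1/2}=1$; everything else is a direct application of the triangle inequality combined with the parameter restriction $\mu_1\leq\lambda_1$.
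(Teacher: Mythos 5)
Your proof is correct and follows exactly the approach the paper implicitly takes: the corollary is a bookkeeping step that unpacks $v_{01}-v_0=w_{1o}+w_{1c}$, $\theta_{01}-\theta_0=\chi_{1o}+\chi_{1c}$, $p_{01}-p_0=-e$ and applies \eqref{b:bounded 3}, \eqref{b:bound x}, Lemma \ref{e:estimate on perturbation and correction}, the cutoff bound $\partial(\rho^2)=2\rho\,\partial\rho$, and $\mu_1\leq\lambda_1$. The only cosmetic imprecision is the claim "$M\geq C_0\geq 2$"; the definition \eqref{d:definition on m} does not directly assert $C_0\geq 2$, but since $\gamma_{k_i}\leq 3/2$ the third term there already forces $M\geq 400$, so the conclusion $2\delta\leq M\delta$ stands.
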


 \subsection{Estimates on $\delta R_{01}$}

 \indent

 Recalling that
  \begin{align}
   \delta R_{01}=&\mathcal{R}(\hbox{div}M_1)+N_1-\mathcal{R}(\chi_1e_2)+\mathcal{R}\Big\{\partial_tw_{1}
   +\hbox{div}\Big[\sum_{l\in Z^3}\Big(w_{1l}\otimes v_0\Big(\frac{l}{\mu_{1}}\Big)
   +v_0\Big(\frac{l}{\mu_{1}}\Big)\otimes w_{1l}\Big)\Big]\Big\}\nonumber\\
   &+(w_{1o}\otimes w_{1c}+w_{1c}\otimes w_{1o}+w_{1c}\otimes w_{1c}).\nonumber
   \end{align}
  We split the stress into three parts: \\
  (1)The oscillation part $$\mathcal{R}(\hbox{div}M_1)-\mathcal{R}
   (\chi_1e_2).$$
  (2)The transport part
  \begin{align}
  \mathcal{R}\Big\{\partial_tw_{1}
   +\hbox{div}\Big[\sum_{l\in Z^3}\Big(w_{1l}\otimes v_0\Big(\frac{l}{\mu_{1}}\Big)+v_0\Big(\frac{l}{\mu_{1}}\Big)\otimes w_{1l}\Big)\Big]\Big\}
   =\mathcal{R}\Big(\partial_tw_{1}+\sum_{l\in Z^3}v_0\Big(\frac{l}{\mu_{1}}\Big)\cdot\nabla w_{1l}\Big).\nonumber
   \end{align}
  (3)The error part
  \begin{align}
  N_1+(w_{1o}\otimes w_{1c}+w_{1c}\otimes w_{1o}+w_{1c}\otimes w_{1c}).\nonumber
   \end{align}
In the following we will estimate each term separately.

\begin{Lemma}[The oscillation part]\label{e:oscillate estimate}
\begin{align}
\|\mathcal{R}({\rm div}M_1)\|_0\leq& C_0(\varepsilon)\frac{\delta\mu_1}{\lambda_1},\quad\|\mathcal{R}({\rm div}M_1)\|_{C^1_{t,x}}\leq C_0(\varepsilon)\delta\mu_1.\label{e:oscillate estimate 1}\\
\|\mathcal{R}(\chi_1e_2)\|_0\leq& C_0(\varepsilon)\frac{\sqrt{\delta}}{\lambda_1},\quad\|\mathcal{R}(\chi_1e_2)\|_{C^1_{t,x}}\leq C_0(\varepsilon)\sqrt{\delta}.\label{e:oscillate estimate 2}
\end{align}
\begin{proof}
First, we have
\begin{align}
M_{1}=\sum_{j=0}^7\sum_{[l]=j}k_1\otimes k_1\Big(e^{2i\lambda_1 2^j k_1^{\perp}\cdot \big(x-v_0(\frac{l}{\mu_1})t\big)}
   +e^{-2i\lambda_1 2^j k_1^{\perp}\cdot \big(x-v_0(\frac{l}{\mu_1})t\big)}\Big{)}b^2_{1l}
   +\sum\limits_{l,l'\in Z^3 ,l\neq l'}w_{1ol}\otimes w_{1ol'}.\nonumber
\end{align}
Since $k_1\cdot k_1^{\perp}=0,$ then
\begin{align}
\hbox{div}M_1=M_{11}+M_{12}.\nonumber
\end{align}
where
\begin{align}
M_{11}=&\sum_{j=0}^7\sum_{[l]=j}k_1\otimes k_1\nabla(b^2_{1l})\Big(e^{2i\lambda_1 2^j k_1^{\perp}\cdot \big(x-v_0(\frac{l}{\mu_1})t\big)}
   +e^{-2i\lambda_1 2^j k_1^{\perp}\cdot \big(x-v_0(\frac{l}{\mu_1})t\big)}\Big),\nonumber\\
M_{12}=&\sum\limits_{l,l'\in Z^3 ,l\neq l'}\hbox{div}(w_{1ol}\otimes w_{1ol'}).\nonumber
\end{align}
 By (\ref{e:estimate on main perturbation}),  (\ref{e:zero estimate on main perturbation}), Proposition \ref{p:proposition R} with $m=\Big[1+\frac{1}{\varepsilon}\Big]+1$ and parameter assumption (\ref{a:assumption on parameter}) , we have
\begin{align}
\|\mathcal{R}(M_{11})\|_0
\leq& C_m\sum\limits_{j=0}^7\Big(\sum_{i=0}^{m-1}\frac{\|\sum_{[l]=j}\nabla(b^2_{1l})\|_i}{(\lambda_12^j)^{i+1}}
+\frac{\|\sum_{[l]=j}\nabla(b^2_{1l})\|_m}{(\lambda_12^j)^{m}}\Big)\nonumber\\
\leq& C_m\sum\limits_{j=0}^7\delta\Big(\frac{\mu_1}{\lambda_12^j}+\frac{\mu_1^{m+1}+\mu_1\ell^{-m}}{(\lambda_12^j)^m}\Big)
\leq C_m\frac{\delta\mu_1}{\lambda_1}.\nonumber
\end{align}
where we use the fact: $b_{1l}b_{1l'}= 0$ if $|l-l'|\geq 2$.\\
Notice
\begin{align}
M_{12}=&\sum\limits_{j=0}^7\sum_{[l]=j}\sum\limits_{l'\in Z^3, 1\leq|l'-l|<2}k_1\otimes k_1\nabla(b_{1l}b_{1l'})\Big(e^{i\lambda_1(2^j+2^{[l']})k_1^{\perp}\cdot x-ig_{1,l,l'}(t)}
+e^{i\lambda_1(2^j-2^{[l']})k_1^{\perp}\cdot x-i\overline{g}_{1,l,l'}(t)}\nonumber\\
&+e^{i\lambda_1(2^{[l']}-2^j)k_1^{\perp}\cdot x+i\overline{g}_{1,l,l'}(t)}
+e^{-i\lambda_1(2^j+2^{[l']})k_1^{\perp}\cdot x+ig_{1,l,l'}(t)}\Big),\nonumber
\end{align}
where
$$g_{1,l,l'}(t)=\lambda_1\Big(2^{[l]}k_1^{\perp}\cdot v_0\Big(\frac{l}{\mu_1}\Big)t+2^{[l']}k_1^{\perp}\cdot v_0\Big(\frac{l'}{\mu_1}\Big)t\Big),~~~~~
\overline{g}_{1,l,l'}(t)=\lambda_1\Big(2^{[l]}k_1^{\perp}\cdot v_0\Big(\frac{l}{\mu_1}\Big)-2^{[l']}k_1^{\perp}\cdot v_0\Big(\frac{l'}{\mu_1}\Big)\Big).$$
Following the same strategy as $M_{11}$, we deduce
\begin{align}\label{e:estimate stragety for m12}
\|\mathcal{R}(M_{12})\|_0
\leq& C_m\sum\limits_{j=0}^7\Big(\sum_{i=0}^{m-1}\frac{\Big\|\sum_{[l]=j}\sum\limits_{l'\in Z^3, 1\leq|l'-l|<2} \nabla(b_{1l}b_{1l'})\Big\|_i}{\lambda_1^{i+1}}
+\frac{\Big\|\sum_{[l]=j}\sum\limits_{l'\in Z^3, 1\leq|l'-l|<2}\nabla(b_{1l}b_{1l'})\Big\|_m}{\lambda_1^{m}}\Big)\nonumber\\
\leq& C_m\sum\limits_{j=0}^7\delta\Big(\frac{\mu_1}{\lambda_1}+\frac{\mu_1^{m+1}+\mu_1\ell^{-m}}{\lambda_1^m}\Big)
\leq C_m\frac{\delta\mu_1}{\lambda_1}.
\end{align}
Thus, the first estimate of (\ref{e:oscillate estimate 1}) follows easily.
A direct calculation gives
\begin{align}
\partial_tM_{11}=&\sum_{j=0}^7\sum_{[l]=j}k_1\otimes k_1\nabla\partial_t(b^2_{1l})\Big(e^{2i\lambda_1 2^j k_1^{\perp}\cdot \big(x-v_0(\frac{l}{\mu_1})t\big)}
   +e^{-2i\lambda_1 2^j k_1^{\perp}\cdot \big(x-v_0(\frac{l}{\mu_1})t\big)}\Big)\nonumber\\
   &-\sum_{j=0}^7\sum_{[l]=j}k_1\otimes k_1\nabla(b^2_{1l})2i\lambda_1 2^j k_1^{\perp}\cdot v_0\Big(\frac{l}{\mu_1}\Big)\Big(e^{2i\lambda_1 2^j k_1^{\perp}\cdot \big(x-v_0(\frac{l}{\mu_1})t\big)}
   -e^{-2i\lambda_1 2^j k_1^{\perp}\cdot \big(x-v_0(\frac{l}{\mu_1})t\big)}\Big).\nonumber
\end{align}
Thus, by (\ref{e:estimate on main perturbation}), (\ref{e:estimate on time derivative}), (\ref{e:zero estimate on main perturbation}), Proposition \ref{p:proposition R} with $m=\Big[1+\frac{1}{\varepsilon}\Big]+1$ and parameter assumption (\ref{a:assumption on parameter}), we have
\begin{align}
\|\partial_t\mathcal{R}(M_{11})\|_0
\leq& C_m\sum\limits_{j=0}^7\Big(\sum_{i=0}^{m-1}\frac{\|\sum_{[l]=j}\nabla\partial_t(b^2_{1l})\|_i}{(\lambda_12^j)^{i+1}}
+\frac{\|\sum_{[l]=j}\nabla\partial_t(b^2_{1l})\|_m}{(\lambda_12^j)^{m}}\Big)\nonumber\\
&+ C_m\lambda_1\sum\limits_{j=0}^7\Big(\sum_{i=0}^{m-1}\frac{\|\sum_{[l]=j}\nabla(b^2_{1l})\|_i}{(\lambda_12^j)^{i+1}}
+\frac{\|\sum_{[l]=j}\nabla(b^2_{1l})\|_m}{(\lambda_12^j)^{m}}\Big)\nonumber\\
\leq& C_m\lambda_1\sum\limits_{j=0}^7\delta\Big(\frac{\mu_1}{\lambda_12^j}+\frac{\mu_1^{m+1}+\mu_1\ell^{-m}}{(\lambda_12^j)^m}\Big)
\leq C_m\delta\mu_1.\nonumber
\end{align}
By the same argument, we have
\begin{align}
\|\partial_t\mathcal{R}(M_{12})\|_0
\leq C_m\delta\mu_1.\nonumber
\end{align}
By Proposition \ref{p:proposition R}, a similar computation as above gives
\begin{align}
\|\nabla\mathcal{R}(M_{11})\|_0
\leq C_m\delta\mu_1,\quad
\|\nabla\mathcal{R}(M_{12})\|_0
\leq C_m\delta\mu_1.\nonumber
\end{align}
Putting these estimates together, we obtain the second estimate of (\ref{e:oscillate estimate 1}).\\
Recalling (\ref{r:another representation on perturbation on temperature})
 \begin{align}
   \chi_{1}=\sum_{j=0}^7\sum_{[l]=j}\Big(h_{1l}e^{i\lambda_1 2^j k_1^{\perp}\cdot \big(x-v_0(\frac{l}{\mu_1})t\big)}+h_{-1l}e^{-i\lambda_1 2^j k_1^{\perp}\cdot \big(x-v_0(\frac{l}{\mu_1})t\big)}\Big).\nonumber
   \end{align}
 By  (\ref{e:estimate on transport amp}), (\ref{e:estimate on transport time derivative}), (\ref{e:zero estimate on transport amp}), (\ref{e:zero estimate on transport time derivative}) and using a similar argument as above, we obtain
\begin{align}
\big\|\mathcal{R}(\chi_1e_2)\big\|_0\leq C_m\frac{\sqrt{\delta}}{\lambda_1},\quad \big\|\mathcal{R}(\chi_1e_2)\big\|_{C^1_{t,x}}
\leq C_m\sqrt{\delta}.\nonumber
\end{align}
Thus, the proof of this lemma is complete.
\end{proof}
\end{Lemma}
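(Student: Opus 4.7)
The plan is to reduce both estimates to a termwise application of Proposition \ref{p:proposition R} by first expanding $\hbox{div}\,M_1$ and $\chi_1 e_2$ into a finite sum of purely oscillatory functions with smooth compactly supported amplitudes, then controlling those amplitudes using Lemma \ref{l:estimate osc}, and finally collapsing the resulting dyadic expression using the parameter constraints $\lambda_1\geq\max\{\mu_1^{1+\varepsilon},\ell^{-(1+\varepsilon)}\}$.

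For $\hbox{div}\,M_1$, split $M_1=M_{11}+M_{12}$ into the ``diagonal'' contribution from $l=l'$ and the ``cross'' contribution from $l\neq l'$. Since $k_1\cdot k_1^{\perp}=0$, taking the divergence of $M_{11}$ does not differentiate the exponential factor, so the result has the form $\sum_{j,l}\nabla(b_{1l}^2)\,k_1\otimes k_1\,e^{\pm 2i\lambda_1 2^{[l]} k_1^\perp\cdot(x-v_0(l/\mu_1)t)}$; applying $\mathcal{R}$ with $m=\lceil 1+1/\varepsilon\rceil+1$ and invoking (\ref{e:estimate on main perturbation}), (\ref{e:zero estimate on main perturbation}) from Lemma \ref{l:estimate osc} makes every term land at size $\delta\mu_1/\lambda_1$ after using the parameter assumptions. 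For $M_{12}$, the product $w_{1ol}\otimes w_{1ol'}$ introduces a combined frequency $\lambda_1(2^{[l]}\pm 2^{[l']})k_1^{\perp}$; because $\alpha_l\alpha_{l'}=0$ unless $|l-l'|<2$, only neighbouring pairs contribute, and such pairs differ in exactly one coordinate by one unit, so the parity-based definition of $[l]$ forces $[l]\neq[l']$ and the combined frequency is a nonzero integer multiple of $\lambda_1$. This ensures $\hbox{div}\,M_{12}\in\Xi$, and the same $\mathcal{R}$-estimate yields a contribution of the same order $\delta\mu_1/\lambda_1$.

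For $\mathcal{R}(\chi_1 e_2)$, use the representation (\ref{r:another representation on perturbation on temperature}). Each summand is a single oscillation with amplitude $h_{\pm 1l}$, and the vanishing moment conditions in (\ref{p:some integration poperty on}) together with compact support guarantee $\chi_1 e_2\in\Xi$. Proposition \ref{p:proposition R} then bounds $\|\mathcal{R}(\chi_1 e_2)\|_0$ by sums of $\|h_{\pm 1l}\|_i/\lambda_1^{i+1}$, which by (\ref{e:estimate on transport amp}), (\ref{e:zero estimate on transport amp}) and the constraint $\lambda_1\geq\mu_1^{1+\varepsilon}$ collapse to $\sqrt{\delta}/\lambda_1$. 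The $C^1_{t,x}$ bounds follow by differentiating once: a spatial derivative on an exponential gives a factor $\lambda_1$; a time derivative either hits the amplitude (controlled by (\ref{e:estimate on time derivative}) and (\ref{e:estimate on transport time derivative})) or differentiates the exponential and produces $\lambda_1 k_1^{\perp}\cdot v_0(l/\mu_1)$. In both cases exactly one factor of $\lambda_1$ is gained, producing the claimed $\delta\mu_1$ and $\sqrt{\delta}$.

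The main obstacle is the careful bookkeeping for the cross-term $M_{12}$: one has to verify (i) that all combined frequencies are genuinely bounded below by $\lambda_1$ (the parity observation just made), (ii) that the finite multiplicity of overlapping neighbours $l'$ keeps the amplitudes uniformly bounded, and (iii) that after applying $\mathcal{R}$ with large $m$, the hybrid terms $\mu_1^{m+1}/\lambda_1^m$ and $\mu_1\ell^{-m}/\lambda_1^m$ still collapse to $\delta\mu_1/\lambda_1$; this is precisely what the parameter assumptions $\lambda_1\geq\mu_1^{1+\varepsilon}$ and $\lambda_1\geq\ell^{-(1+\varepsilon)}$ were designed to ensure, provided $m$ is chosen large enough in terms of $\varepsilon$.
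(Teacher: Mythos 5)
Your proposal follows the paper's proof essentially step for step: the same split $M_1=M_{11}+M_{12}$, the same use of $k_1\cdot k_1^\perp=0$ to push the divergence onto the amplitudes, the same termwise application of Proposition~\ref{p:proposition R} with $m=\big[1+\tfrac1\varepsilon\big]+1$ combined with Lemma~\ref{l:estimate osc} and the parameter constraints, the same representation of $\chi_1e_2$ through $h_{\pm 1l}$, and the same ``one extra factor of $\lambda_1$'' mechanism for the $C^1_{t,x}$ bounds. One small inaccuracy worth flagging: neighbouring indices with $1\le|l-l'|<2$ in $Z^3$ need not differ in exactly one coordinate (they may differ in up to three coordinates, each by one unit), but your conclusion $[l]\neq[l']$ is still correct, since $[l]-[l']$ is then a nonempty signed sum of distinct powers of two and therefore cannot vanish; this is what actually guarantees the combined frequencies $2^{[l]}\pm 2^{[l']}$ are nonzero and that $\mathcal{R}$ gains a factor $\lambda_1^{-1}$ on the cross terms.
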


\begin{Lemma}[The transport part]\label{e:trans estimate}
\begin{align}
\Big\|\mathcal{R}\Big(\partial_tw_{1}+\sum_{l\in Z^3}v_0\Big(\frac{l}{\mu_{1}}\Big)\cdot\nabla w_{1l}\Big)\Big\|_0\leq C_0(\varepsilon)\frac{\sqrt{\delta}\mu_1}{\lambda_1},\nonumber\\
\Big\|\mathcal{R}\Big(\partial_tw_{1}+\sum_{l\in Z^3}v_0\Big(\frac{l}{\mu_{1}}\Big)\cdot\nabla w_{1l}\Big)\Big\|_{C^1_{t,x}}\leq C_0(\varepsilon)\sqrt{\delta}\mu_1.
\end{align}
\begin{proof}
Recalling (\ref{r:second representati on velocity perturbation})
 \begin{align}
w_1=\sum_{j=0}^7\sum_{[l]=j}\Big(k_{1l}e^{i\lambda_1 2^j k_1^{\perp}\cdot \big(x-v_{0}(\frac{l}{\mu_1})t\big)}+k_{-1l}e^{-i\lambda_1 2^jk_1^{\perp}\cdot \big(x-v_{0}(\frac{l}{\mu_1})t\big)}\Big).\nonumber
\end{align}
Thus, using the identity
$$\Big(\partial_t+v_0\Big(\frac{l}{\mu_{1}}\Big)\cdot\nabla\Big)e^{\pm i\lambda_1 2^{[l]} k_1^{\perp}\cdot (x-v_{0}(\frac{l}{\mu_1})t\big)}=0,$$
we deduce
\begin{align}
\partial_tw_{1}+\sum_{l\in Z^3}v_{0}\Big(\frac{l}{\mu_1}\Big)\cdot\nabla w_{1l}
=&\sum_{j=0}^7\sum_{[l]=j}\Big(\Big(\partial_t
+v_{0}\Big(\frac{l}{\mu_1}\Big)\cdot\nabla\Big)k_{1l}e^{i\lambda_1 2^j k_1^{\perp}\cdot (x-v_{0}(\frac{l}{\mu_1})t\big)}\nonumber\\
&+\Big(\partial_t+v_0\Big(\frac{l}{\mu_1}\Big)\cdot\nabla\Big)k_{-1l}e^{-i\lambda_1 2^j k_1^{\perp}\cdot (x-v_{0}(\frac{l}{\mu_1})t\big)}\Big{)}.\nonumber
   \end{align}
By Proposition \ref{p:proposition R} with $m=\Big[1+\frac{1}{\varepsilon}\Big]+1$ , (\ref{e:estimate on transport amp}), (\ref{e:estimate on transport time derivative}), (\ref{e:zero estimate on transport time derivative}) and parameter assumption(\ref{a:assumption on parameter}), we have
\begin{align}\label{e:estimate method for velocity transport term}
&\Big\|\mathcal{R}\Big(\partial_tw_{1}+\sum_{l\in Z^3}v_0\Big(\frac{l}{\mu_{1}}\Big)\cdot\nabla w_{1l}\Big)\Big\|_0\nonumber\\
\leq& C_m\sum\limits_{j=0}^7\Big(\sum_{i=0}^{m-1}\frac{\Big\|\sum_{[l]=j}\Big(\partial_t
+v_{0}\Big(\frac{l}{\mu_1}\Big)\cdot\nabla\Big)k_{1l}\Big\|_i}{(\lambda_12^j)^{i+1}}
+\frac{\Big\|\sum_{[l]=j}\Big(\partial_t
+v_{0}\Big(\frac{l}{\mu_1}\Big)\cdot\nabla\Big)k_{1l}\Big\|_m}{(\lambda_12^j)^{m}}\Big)\nonumber\\
&+C_m\sum\limits_{j=0}^7\Big(\sum_{i=0}^{m-1}\frac{\Big\|\sum_{[l]=j}\Big(\partial_t
+v_{0}\Big(\frac{l}{\mu_1}\Big)\cdot\nabla\Big)k_{-1l}\Big\|_i}{(\lambda_12^j)^{i+1}}
+\frac{\Big\|\sum_{[l]=j}\Big(\partial_t
+v_{0}\Big(\frac{l}{\mu_1}\Big)\cdot\nabla\Big)k_{-1l}\Big\|_m}{(\lambda_12^j)^{m}}\Big)\nonumber\\
\leq& C_m\sum\limits_{j=0}^7\sqrt{\delta}\Big(\frac{\mu_1}{\lambda_12^j}+\frac{\mu_1^{m+1}+\mu_1\ell^{-m}}{(\lambda_12^j)^m}\Big)
\leq C_m\frac{\sqrt{\delta}\mu_1}{\lambda_1}.
\end{align}
A direct calculation gives
\begin{align}
\partial_t\Big(\partial_tw_{1}+\sum_{l\in Z^3}v_{0}\Big(\frac{l}{\mu_1}\Big)\cdot\nabla w_{1l}\Big)\nonumber
=&\sum_{j=0}^7\sum_{[l]=j}\Big(\Big(\partial_t
+v_{0}\Big(\frac{l}{\mu_1}\Big)\cdot\nabla\Big)\partial_tk_{1l}e^{i\lambda_1 2^j k_1^{\perp}\cdot \big(x-v_{0}(\frac{l}{\mu_1})t\big)}\nonumber\\
&+\Big(\partial_t
+v_0\Big(\frac{l}{\mu_1}\Big)\cdot\nabla\Big)\partial_tk_{-1l}e^{-i\lambda_1 2^j k_1^{\perp}\cdot \big(x-v_{0}(\frac{l}{\mu_1})t\big)}\nonumber\\
&-\Big(\partial_t+v_{0}\Big(\frac{l}{\mu_1}\Big)\cdot\nabla\Big)k_{1l}i\lambda_1 2^j k_1^{\perp}\cdot v_{0}\Big(\frac{l}{\mu_1}\Big)e^{i\lambda_1 2^j k_1^{\perp}\cdot \big(x-v_{0}(\frac{l}{\mu_1})t\big)}\nonumber\\
&+\Big(\partial_t+v_{0}\Big(\frac{l}{\mu_1}\Big)\cdot\nabla\Big)k_{-1l}i\lambda_1 2^j k_1^{\perp}\cdot v_{0}\Big(\frac{l}{\mu_1}\Big)e^{i\lambda_1 2^j k_1^{\perp}\cdot \big(x-v_{0}(\frac{l}{\mu_1})t\big)}\Big),\nonumber
\end{align}
thus, by (\ref{e:estimate on transport amp})-(\ref{e:estimate on b1l}), (\ref{e:zero estimate on transport time derivative})-(\ref{e:zero estimate on b1l})  and applying the same argument as above, we arrive at
\begin{align}
\Big\|\partial_t\mathcal{R}\Big(\partial_tw_{1}+\sum_{l\in Z^3}v_{0}\Big(\frac{l}{\mu_1}\Big)\cdot\nabla w_{1l}\Big)\Big\|_0
\leq C_m\sqrt{\delta}\mu_1.\nonumber
\end{align}
By Proposition \ref{p:proposition R}, a similar computation as above gives
\begin{align}
\Big\|\nabla\mathcal{R}\Big(\partial_tw_{1}+\sum_{l\in Z^3}v_{0}\Big(\frac{l}{\mu_1}\Big)\cdot\nabla w_{1l}\Big)\Big\|_0
\leq C_m\sqrt{\delta}\mu_1.\nonumber
\end{align}
Then we proved the Lemma \ref{e:trans estimate}.
\end{proof}
\end{Lemma}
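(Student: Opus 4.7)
The plan is to exploit the key algebraic identity built into the construction of $w_1$: each exponential phase $e^{\pm i\lambda_1 2^{[l]}k_1^\perp\cdot(x-v_0(l/\mu_1)t)}$ is annihilated by the transport operator $\partial_t+v_0(l/\mu_1)\cdot\nabla$, so that when we apply this operator to the representation (\ref{r:second representati on velocity perturbation}), the derivatives land only on the amplitudes $k_{\pm 1l}$ and not on the high-frequency oscillation. This is exactly the mechanism that converts a naively $O(\sqrt{\delta}\lambda_1)$ object into something of size $O(\sqrt{\delta}\mu_1)$ (before $\mathcal{R}$), which after inversion gains a further factor $\lambda_1^{-1}$.

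Concretely, I would first compute, using the annihilation identity,
\begin{align*}
\partial_t w_1+\sum_{l\in Z^3}v_0\!\Big(\tfrac{l}{\mu_1}\Big)\!\cdot\!\nabla w_{1l}=\sum_{j=0}^{7}\sum_{[l]=j}\Big[D_l k_{1l}\,e^{i\lambda_1 2^jk_1^\perp\cdot(x-v_0(l/\mu_1)t)}+D_l k_{-1l}\,e^{-i\lambda_1 2^jk_1^\perp\cdot(x-v_0(l/\mu_1)t)}\Big],
\end{align*}
where $D_l:=\partial_t+v_0(l/\mu_1)\cdot\nabla$. Next I would verify that this right-hand side belongs to the space $\Xi$ of Proposition \ref{p:proposition R}: compact support in $Q_{r+\delta}$ is inherited from the support properties in (\ref{p:support}), and the vanishing-mean plus first-moment conditions are inherited from the corresponding properties (\ref{p:vanishing moment}) of $w_1$ (whose divergence-free, mean-zero, first-moment-zero structure is preserved under constant-coefficient transport).

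With those prerequisites in place, I apply the operator estimate (\ref{p:property on inverse operator}) with an integer $m:=[1+1/\varepsilon]+1$, which sends the $m$-th remainder term below the leading term by virtue of the assumption $\lambda_1\geq\max\{\mu_1^{1+\varepsilon},\ell^{-(1+\varepsilon)}\}$ in (\ref{a:assumption on parameter}). The amplitude inputs are supplied by Lemma \ref{l:estimate osc}: $\|D_lk_{\pm 1l}\|_0\leq\|\partial_tk_{\pm 1l}\|_0+C_0\|v_0\|_0\|k_{\pm 1l}\|_1\leq C_0\sqrt{\delta}\mu_1$, and more generally $\|D_l k_{\pm 1l}\|_i\leq C_i\sqrt{\delta}(\mu_1^{i+1}+\mu_1\ell^{-i})$. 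Plugging into the proposition's bound,
\begin{align*}
\Big\|\mathcal{R}\Big(\partial_tw_1+\sum_{l}v_0\!\Big(\tfrac{l}{\mu_1}\Big)\!\cdot\!\nabla w_{1l}\Big)\Big\|_0\leq C_m\sum_{j=0}^{7}\sqrt{\delta}\Big(\frac{\mu_1}{\lambda_1 2^j}+\frac{\mu_1^{m+1}+\mu_1\ell^{-m}}{(\lambda_1 2^j)^m}\Big)\leq C_0(\varepsilon)\frac{\sqrt{\delta}\mu_1}{\lambda_1},
\end{align*}
where the second fraction is controlled by the first precisely because of the $\varepsilon$-gap in the parameter assumption.

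For the $C^1_{t,x}$ estimate I would differentiate the above expression once in $t$ (respectively in $x$); each derivative either falls on an amplitude, producing an extra factor of $\mu_1$ via (\ref{e:estimate on transport time derivative})--(\ref{e:estimate on b1l}), or on the exponential, producing a factor of $\lambda_1$. Both scenarios plug into the same $\mathcal{R}$-machinery and give a final bound of $C_0(\varepsilon)\sqrt{\delta}\mu_1$. The main obstacle is purely technical bookkeeping: one must select $m$ large enough and keep careful track of which powers of $\ell^{-1}$ and $\mu_1$ appear where, so that the $\varepsilon$-cushion in (\ref{a:assumption on parameter}) absorbs every error term. The real conceptual point, and the only nontrivial ingredient, is the phase-annihilation identity; once that is isolated, the remainder is a mechanical application of Proposition \ref{p:proposition R} and Lemma \ref{l:estimate osc}.
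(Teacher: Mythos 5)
Your proposal is correct and takes essentially the same route as the paper: you identify the phase-annihilation identity $\bigl(\partial_t+v_0(\tfrac{l}{\mu_1})\cdot\nabla\bigr)e^{\pm i\lambda_1 2^{[l]}k_1^\perp\cdot(x-v_0(l/\mu_1)t)}=0$ as the crux, reduce the operand of $\mathcal{R}$ to transported amplitudes $D_l k_{\pm 1l}$, and then apply Proposition \ref{p:proposition R} with $m=[1+1/\varepsilon]+1$ together with the amplitude bounds of Lemma \ref{l:estimate osc} and the parameter gap $\lambda_1\geq\max\{\mu_1^{1+\varepsilon},\ell^{-(1+\varepsilon)}\}$, exactly as the authors do. Your brief sanity check that the transported expression lies in $\Xi$ is a detail the paper establishes once in Section 4 rather than inside this lemma, but it is harmless and consistent with the argument.
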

\begin{Lemma}[Estimates on error part I]\label{e:err 1}
\begin{align}
\|N_1\|_0\leq C_0\Big(\sqrt{\delta}\frac{\Lambda}{\mu_1}+\Lambda\ell\Big),\qquad\|N_1\|_{C^1_{t,x}}\leq  C_0\lambda_1\Big(\sqrt{\delta}\frac{\Lambda}{\mu_1}+\Lambda\ell\Big).
\end{align}
\end{Lemma}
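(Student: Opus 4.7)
The plan is to split $N_1$ into two pieces and estimate each: the mollification error $R_0-R_{0\ell}$, and the symmetric sum
$$S_1:=\sum_{l\in Z^3}\Big[w_{1l}\otimes\Big(v_0-v_0\Big(\tfrac{l}{\mu_1}\Big)\Big)+\Big(v_0-v_0\Big(\tfrac{l}{\mu_1}\Big)\Big)\otimes w_{1l}\Big].$$

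For the mollification part, the $L^\infty$ bound is immediate from \eqref{e:estimate different}, giving $\|R_0-R_{0\ell}\|_0\leq C_0\Lambda\ell$. For the $C^1$ bound, since $\|R_0\|_{C^1_{t,x}}\leq\Lambda$ and standard mollifier estimates yield $\|R_{0\ell}\|_{C^1_{t,x}}\leq C_0\Lambda$, we obtain $\|R_0-R_{0\ell}\|_{C^1_{t,x}}\leq C_0\Lambda\leq C_0\lambda_1\Lambda\ell$, using $\lambda_1\ell\geq 1$ from the parameter assumption \eqref{a:assumption on parameter}.

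For the sum $S_1$ the key point is spatial-temporal localization: because $w_{1l}$ carries the cutoff $\alpha_l(\mu_1 t,\mu_1 x)$ whose support lies in $B_{c_2}(l)$, we have $\mathrm{supp}\,w_{1l}\subset\{(t,x):|(t,x)-l/\mu_1|\leq c_2/\mu_1\}$. By the mean value theorem, on this set $|v_0(t,x)-v_0(l/\mu_1)|\leq\|v_0\|_{C^1_{t,x}}\cdot c_2/\mu_1\leq C_0\Lambda/\mu_1$. Combining this with the pointwise bound $\|w_{1l}\|_0\leq C_0\sqrt{\delta}$ (from \eqref{e:zero estimate on transport amp} applied to the representation $w_{1l}=k_{1l}e^{i\cdots}+k_{-1l}e^{-i\cdots}$) and the observation from \eqref{p:unity} that at most $O(1)$ indices $l$ contribute at any point, we obtain $\|S_1\|_0\leq C_0\sqrt{\delta}\Lambda/\mu_1$.

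For the $C^1$ bound on $S_1$ we apply the product rule. A derivative falling on $w_{1l}$ costs a factor $\lambda_1$ (since $\|w_{1l}\|_{C^1_{t,x}}\leq C_0\sqrt{\delta}\lambda_1$, which follows from differentiating the representation of $w_{1l}$ and using Lemma \ref{l:estimate osc} exactly as in Lemma \ref{e:estimate on perturbation and correction}), but remains paired with the small difference $\Lambda/\mu_1$ on the support; a derivative falling on $v_0-v_0(l/\mu_1)$ produces $\partial v_0$ of norm $\leq\Lambda$, paired with $\|w_{1l}\|_0\leq C_0\sqrt{\delta}$. The first contribution dominates, yielding $\|S_1\|_{C^1_{t,x}}\leq C_0\sqrt{\delta}\lambda_1\Lambda/\mu_1$. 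Summing the two bounds delivers the claimed estimates. The main subtlety is conceptual rather than computational: the smallness factor $\Lambda/\mu_1$ comes from localization applied to the \emph{function} $v_0-v_0(l/\mu_1)$, and must not be confused with the uniform bound $\Lambda$ on its derivative, which is where the loss of a factor $\lambda_1$ in the $C^1$ estimate originates.
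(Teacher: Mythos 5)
Your proposal is correct and follows essentially the same route as the paper: split $N_1$ into the mollification error $R_0-R_{0\ell}$ (bounded via the standard convolution estimate and, for the $C^1$ bound, by $\Lambda\leq\lambda_1\Lambda\ell$) and the localized transport difference $N_{11}$, which is controlled by writing $w_{1l}$ in terms of $k_{\pm 1l}$, exploiting $\mathrm{supp}\,k_{1l}\subset\{|(\mu_1 t,\mu_1 x)-l|\leq 1\}$ to gain the factor $\Lambda/\mu_1$ from the mean value theorem, and using the finite overlap of the partition of unity. The $C^1$ bookkeeping you describe (derivative on $w_{1l}$ gives $\lambda_1$ kept against $\Lambda/\mu_1$; derivative on $v_0-v_0(l/\mu_1)$ gives $\Lambda$ against $\sqrt{\delta}$; the former dominates since $\lambda_1\geq\mu_1$) matches the paper's computation.
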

\begin{proof}
We may rewrite $N_{1}$ as
\begin{align}
 N_1=N_{11}+N_{12},\nonumber
 \end{align}
 where
 \begin{align}
 N_{11}=\sum_{l\in Z^3}\Big[w_{1l}\otimes \Big(v_0-v_0\Big(\frac{l}{\mu_{1}}\Big)\Big)
   +\Big(v_0-v_0\Big(\frac{l}{\mu_{1}}\Big)\Big{)}\otimes w_{1l}\Big],\quad
N_{12}=R_0-R_{0\ell}.\nonumber
\end{align}
For the term $N_{11}$, by (\ref{r:second representati on velocity perturbation}), we have
\begin{align}
&\sum_{l\in Z^3}w_{1l}\otimes \Big(v_0-v_0\Big(\frac{l}{\mu_{1}}\Big)\Big)\nonumber\\
=&\sum_{l\in Z^3}\Big(k_{1l}\otimes \Big(v_0-v_0\Big(\frac{l}{\mu_{1}}\Big)\Big)e^{i\lambda_1 2^{[l]} k_1^{\perp}\cdot \big(x-v_{0}(\frac{l}{\mu_1})t\big)}+k_{-1l}\otimes \Big(v_0-v_0\Big(\frac{l}{\mu_{1}}\Big)\Big)e^{-i\lambda_1 2^{[l]}k_1^{\perp}\cdot \big(x-v_{0}(\frac{l}{\mu_1})t\big)}\Big).\nonumber
\end{align}
Obviously, $k_{1l}(x,t)\neq0$ implies $|(\mu_1t,\mu_1 x)-l|\leq1$, therefore, by (\ref{e:zero estimate on transport amp})
$$\Big|k_{1l}\otimes\Big(v_0-v_0\Big(\frac{l}{\mu_{1}}\Big)\Big)\Big|\leq C_0\sqrt{\delta}\frac{\|\nabla v_0\|_0}{\mu_1}\leq C_0\sqrt{\delta}\frac{\Lambda}{\mu_1}.$$
By (\ref{p:unity}), it's easy to get
\begin{align}
\Big\|\sum_{l\in Z^3}k_{1l}\otimes \Big(v_0-v_{0}\Big(\frac{l}{\mu_1}\Big)\Big)\Big\|_0\leq C_0\sqrt{\delta}\frac{\Lambda}{\mu_1}.\nonumber
\end{align}
Similarly,
\begin{align}
\Big\|\sum_{l\in Z^3} k_{-1l}\otimes\Big(v_0-v_{0}\Big(\frac{l}{\mu_1}\Big)\Big)\Big\|_0\leq C_0\sqrt{\delta}\frac{\Lambda}{\mu_1}.\nonumber
\end{align}
Thus,
\begin{align}\label{e:first part estimate}
\Big\|\sum_{l\in Z^3} w_{1l}\otimes\Big(v_0-v_{0}\Big(\frac{l}{\mu_1}\Big)\Big)\Big\|_0\leq C_0\sqrt{\delta}\frac{\Lambda}{\mu_1}.
\end{align}
Following the same strategy:
\begin{align}\label{e:second part estimate}
\Big\|\sum_{l\in Z^3}\Big(v_0-v_{0}\Big(\frac{l}{\mu_1}\Big)\Big)\otimes w_{1l}\Big\|_0\leq C_0\sqrt{\delta}\frac{\Lambda}{\mu_1}.
\end{align}
Finally, putting (\ref{e:first part estimate}) and (\ref{e:second part estimate}) together, we arrive at
\begin{align}\label{e:final estimate on first part}
\|N_{11}\|_0\leq C_0\sqrt{\delta}\frac{\Lambda}{\mu_1}.
\end{align}
Moreover, we have
\begin{align}
&\partial_t\Big(\sum_{l\in Z^3}w_{1l}\otimes \Big(v_0-v_0\Big(\frac{l}{\mu_{1}}\Big)\Big)\Big)\nonumber\\
=&\sum_{l\in Z^3}\Big(\partial_tk_{1l}e^{i\lambda_1 2^{[l]} k_1^{\perp}\cdot \big(x-v_{0}(\frac{l}{\mu_1})t\big)}+\partial_tk_{-1l}e^{-i\lambda_1 2^{[l]}k_1^{\perp}\cdot \big(x-v_{0}(\frac{l}{\mu_1})t\big)}\Big{)}\otimes \Big(v_0-v_0\Big(\frac{l}{\mu_{1}}\Big)\Big)\nonumber\\
&+\sum_{l\in Z^3}\Big(k_{1l}e^{i\lambda_1 2^{[l]} k_1^{\perp}\cdot \big(x-v_{0}(\frac{l}{\mu_1})t\big)}+k_{-1l}e^{-i\lambda_1 2^{[l]}k_1^{\perp}\cdot \big(x-v_{0}(\frac{l}{\mu_1})t\big)}\Big{)}\otimes \partial_tv_0\nonumber\\
&+\sum_{l\in Z^3}\Big(-k_{1l}i\lambda_1 2^{[l]} k_1^{\perp}\cdot v_{0}\Big(\frac{l}{\mu_1}\Big)e^{i\lambda_1 2^{[l]} k_1^{\perp}\cdot \big(x-v_{0}(\frac{l}{\mu_1})t\big)}\nonumber\\
&+k_{-1l}i\lambda_1 2^{[l]} k_1^{\perp}\cdot v_{0}\Big(\frac{l}{\mu_1}\Big)e^{-i\lambda_1 2^{[l]}k_1^{\perp}\cdot \big(x-v_{0}(\frac{l}{\mu_1})t\big)}\Big{)}\otimes \Big(v_0-v_0\Big(\frac{l}{\mu_{1}}\Big)\Big),\nonumber
\end{align}
thus, by (\ref{e:zero estimate on transport amp}), (\ref{e:zero estimate on transport time derivative}) and parameter assumption(\ref{a:assumption on parameter})
\begin{align}
\Big\|\partial_t\Big(\sum_{l\in Z^3}w_{1l}\otimes \Big(v_0-v_{0}\Big(\frac{l}{\mu_1}\Big)\Big)\Big)\Big\|_0\leq C_0\sqrt{\delta}\lambda_1\frac{\Lambda}{\mu_1}.\nonumber
\end{align}
Similarly, we have
\begin{align}
\Big\|\partial_t\Big(\sum_{l\in Z^3}\Big(v_0-v_{0}\Big(\frac{l}{\mu_1}\Big)\Big)\otimes w_{1l}\Big)\Big\|_0\leq C_0\sqrt{\delta}\lambda_1\frac{\Lambda}{\mu_1}.\nonumber
\end{align}
Therefore,
\begin{align}\label{e:time derivative estimate on first part}
\|\partial_tN_{11}\|_0\leq C_0\sqrt{\delta}\lambda_1\frac{\Lambda}{\mu_1}.
\end{align}
By a similar argument, we have
\begin{align}\label{e:space derivative estimate on first part}
\|\nabla N_{11}\|_0\leq C_0\sqrt{\delta}\lambda_1\frac{\Lambda}{\mu_1}.
\end{align}
By (\ref{e:estimate different}), we have
\begin{align}
\|R_0-R_{0\ell}\|_0\leq C_0\Lambda\ell,\quad\|\partial_t(R_0-R_{0\ell})\|_0\leq C_0\Lambda,\quad\|\nabla(R_0-R_{0,\ell})\|_0\leq C_0\Lambda.\nonumber
\end{align}
Thus, by parameter assumption(\ref{a:assumption on parameter}), we arrive at
\begin{align}\label{e:final estimate on second part}
\|N_{12}\|_0\leq C_0\Lambda\ell,\quad \|N_{12}\|_{C^1_{t,x}}\leq C_0\lambda_1\Lambda\ell.
\end{align}
Collecting (\ref{e:final estimate on first part}), (\ref{e:time derivative estimate on first part}), (\ref{e:space derivative estimate on first part}) and
(\ref{e:final estimate on second part}), we obtain
\begin{align}
\|N_1\|_0\leq  C_0\Big(\sqrt{\delta}\frac{\Lambda}{\mu_1}+\Lambda\ell\Big),\qquad\|N_1\|_{C^1_{t,x}}\leq  C_0\lambda_1\Big(\sqrt{\delta}\frac{\Lambda}{\mu_1}+\Lambda\ell\Big).\nonumber
\end{align}
We complete our proof of this lemma.
\end{proof}

\begin{Lemma}[Estimates on error part II]\label{e: err 2}
\begin{align}
\|w_{1o}\otimes w_{1c}+w_{1c}\otimes w_{1o}+w_{1c}\otimes w_{1c}\|_0\leq C_0\frac{\delta\mu_1}{\lambda_1},\nonumber\\
\|w_{1o}\otimes w_{1c}+w_{1c}\otimes w_{1o}+w_{1c}\otimes w_{1c}\|_{C^1_{t,x}}\leq C_0\delta\mu_1.
\end{align}
\begin{proof}
By (\ref{e:estimate main pertuebations}) and (\ref{e:estimate on correction}), we have
\begin{align}
\|w_{1o}\otimes w_{1c}+w_{1c}\otimes w_{1o}+w_{1c}\otimes w_{1c}\|_0
\leq C_0(\|w_{1o}\|_0\|w_{1c}\|_0+\|w_{1c}\|_0^2)
\leq C_0\frac{\delta\mu_1}{\lambda_1}\nonumber
\end{align}
and
\begin{align}
\|w_{1o}\otimes w_{1c}+w_{1c}\otimes w_{1o}+w_{1c}\otimes w_{1c}\|_{C^1_{t,x}}\leq C_0(\|w_{1o}\|_0\|w_{1c}\|_{C^1_{t,x}}+\|w_{1o}\|_{C^1_{t,x}}\|w_{1c}\|_0)\leq C_0\delta\mu_1.\nonumber
\end{align}
thus, we complete the proof of this lemma.
\end{proof}
\end{Lemma}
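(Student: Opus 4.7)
The plan is to bound each of the three summands $w_{1o}\otimes w_{1c}$, $w_{1c}\otimes w_{1o}$ and $w_{1c}\otimes w_{1c}$ separately using bilinearity of the tensor product (in the $C^0$ case) and the Leibniz rule (in the $C^1_{t,x}$ case), and then plug in the already-established Lemma~\ref{e:estimate on perturbation and correction}, which supplies $\|w_{1o}\|_0\leq C_0\sqrt{\delta}$, $\|w_{1c}\|_0\leq C_0\sqrt{\delta}\mu_1/\lambda_1$, $\|w_{1o}\|_{C^1_{t,x}}\leq C_0\sqrt{\delta}\lambda_1$, and $\|w_{1c}\|_{C^1_{t,x}}\leq C_0\sqrt{\delta}\mu_1$. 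Essentially no new work is needed beyond assembling these bounds in the right way.

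For the $C^0$ inequality, bilinearity gives $\|w_{1o}\otimes w_{1c}\|_0\leq \|w_{1o}\|_0\|w_{1c}\|_0\leq C_0\delta\mu_1/\lambda_1$ and analogously for $\|w_{1c}\otimes w_{1o}\|_0$, while $\|w_{1c}\otimes w_{1c}\|_0\leq \|w_{1c}\|_0^2\leq C_0\delta\mu_1^2/\lambda_1^2$. I then invoke the standing parameter assumption $\lambda_1\geq\mu_1^{1+\varepsilon}$ from (\ref{a:assumption on parameter}), which in particular implies $\mu_1/\lambda_1\leq 1$, so the quadratic-in-$w_{1c}$ contribution is absorbed into $C_0\delta\mu_1/\lambda_1$, matching the first displayed inequality.

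For the $C^1_{t,x}$ inequality, I apply the elementary product-rule bound $\|fg\|_{C^1_{t,x}}\leq C_0(\|f\|_{C^1_{t,x}}\|g\|_0+\|f\|_0\|g\|_{C^1_{t,x}})$ entrywise to each matrix product. The cross terms give $\|w_{1o}\otimes w_{1c}\|_{C^1_{t,x}}\leq C_0(\sqrt{\delta}\lambda_1\cdot\sqrt{\delta}\mu_1/\lambda_1+\sqrt{\delta}\cdot\sqrt{\delta}\mu_1)\leq C_0\delta\mu_1$, and the symmetric computation handles $\|w_{1c}\otimes w_{1o}\|_{C^1_{t,x}}$. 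For the $w_{1c}\otimes w_{1c}$ term the same identity produces $C_0\|w_{1c}\|_0\|w_{1c}\|_{C^1_{t,x}}\leq C_0\delta\mu_1^2/\lambda_1$, which by $\mu_1/\lambda_1\leq 1$ is once again at most $C_0\delta\mu_1$.

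There is no real obstacle here: this is pure bookkeeping that exploits the fact that by construction the correction $w_{1c}$ carries an extra factor of $\mu_1/\lambda_1$ relative to the principal wave $w_{1o}$, while differentiating costs at most one factor of $\lambda_1$. The one point worth flagging is that writing the first bound as $C_0\delta\mu_1/\lambda_1$ (with no extra $\mu_1/\lambda_1$ factor) relies on $\lambda_1\geq\mu_1$, which is automatic from the parameter hierarchy (\ref{a:assumption on parameter}); no further constraints on $\ell,\mu_1,\lambda_1$ are required.
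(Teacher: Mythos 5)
Your proof is correct and follows essentially the same route as the paper's: plug the bounds from Lemma \ref{e:estimate on perturbation and correction} into the bilinear and product-rule estimates. You are slightly more explicit than the paper in isolating the $w_{1c}\otimes w_{1c}$ contribution and noting that the extra factor $\mu_1/\lambda_1\leq 1$ from (\ref{a:assumption on parameter}) absorbs it, which the paper leaves implicit; this is a minor improvement in clarity, not a different argument.
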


Finally, from  Lemma \ref{e:oscillate estimate}, Lemma \ref{e:trans estimate}, Lemma \ref{e:err 1} and Lemma \ref{e: err 2}, we conclude
\begin{align}\label{e:estimate on first velocity error}
\|\delta R_{01}\|_0\leq C_0(\varepsilon)\Big(\frac{\sqrt{\delta}\mu_1}{\lambda_1}+\sqrt{\delta}\frac{\Lambda}{\mu_1}+\Lambda\ell\Big),\quad
\|\delta R_{01}\|_{C^1_{t,x}}\leq C_0(\varepsilon)\lambda_1\Big(\frac{\sqrt{\delta}\mu_1}{\lambda_1}+\sqrt{\delta}\frac{\Lambda}{\mu_1}+\Lambda\ell\Big).
\end{align}

\subsection{Estimates on $\delta f_{01}$}

 \indent

Recalling that
\begin{align}
   \delta f_{01}=&\mathcal{G}(\hbox{div}K_1)
    +\mathcal{G}\Big(\partial_t\chi_{1}+\sum_{l\in Z^3}v_0\Big(\frac{l}{\mu_{1}}\Big)\cdot\nabla\chi_{1l}\Big)
    +w_{1o}\chi_{1c}+f_0-f_{0\ell}\nonumber\\
   &+\sum_{l\in Z^3}\Big(v_0-v_0\Big(\frac{l}{\mu_{1}}\Big)\Big)\chi_{1l}+w_{1c}\chi_1+\sum_{l\in Z^3}w_{1l}\Big(\theta_0-\theta_0\Big(\frac{l}{\mu_1}\Big)\Big).\nonumber
\end{align}
As before, we split $\delta f_{01}$ into three parts:\\
(1)The oscillation part:
\begin{align}
\mathcal{G}(\hbox{div}K_1).\nonumber
\end{align}
(2)The transport part:
\begin{align}
\mathcal{G}\Big(\partial_t\chi_{1}+\sum_{l\in Z^3}v_0\Big(\frac{l}{\mu_{1}}\Big)\cdot\nabla\chi_{1l}\Big).\nonumber
\end{align}
(3)The error part:
\begin{align}
w_{1c}\chi_1+w_{1o}\chi_{1c}+f_0-f_{0\ell}+\sum\limits_{l\in Z^3}\Big(v_0-v_0\Big(\frac{l}{\mu_{1}}\Big)\Big)\chi_{1l}+\sum_{l\in Z^3}w_{1l}\Big(\theta_0-\theta_0\Big(\frac{l}{\mu_2}\Big)\Big).\nonumber
\end{align}

\begin{Lemma}[The oscillation part]\label{e:osci}
\begin{align}
\|\mathcal{G}({\rm div}K_1)\|_0\leq C_0(\varepsilon)\frac{\delta\mu_1}{\lambda_1},\qquad\|\mathcal{G}({\rm div}K_1)\|_{C^1_{t,x}}\leq C_0(\varepsilon)\delta\mu_1.
\end{align}
\begin{proof}
Recalling the notations of $K_1$ and $[l]$, we have
\begin{align}
K_1=\sum_{j=0}^7\sum_{[l]=j}\beta_{1l}b_{1l}k_1\Big(e^{i\lambda_1 2^j k_1^{\perp}\cdot \big(x-v_{0}(\frac{l}{\mu_1})t\big)}+e^{-i\lambda_1 2^jk_1^{\perp}\cdot \big(x-v_{0}(\frac{l}{\mu_1})t\big)}\Big{)}\Big)
+\sum\limits_{l,l'\in Z^3 ,l\neq l'}w_{1ol}\chi_{1ol'},\nonumber
\end{align}
thus
\begin{align}
\hbox{div}K_1=K_{11}+K_{12},\nonumber
\end{align}
where
\begin{align}
K_{11}=&\sum_{j=0}^7\sum_{[l]=j}\nabla(\beta_{1l}b_{1l})\cdot k_1\Big(e^{i\lambda_1 2^{|l|} k_1^{\perp}\cdot \big(x-v_{0}(\frac{l}{\mu_1})t\big)}+e^{-i\lambda_1 2^{|l|}k_1^{\perp}\cdot \big(x-v_{0}(\frac{l}{\mu_1})t\big)}\Big{)},\nonumber\\
K_{12}=&\sum\limits_{l,l'\in Z^3 ,l\neq l'}\hbox{div}(w_{1ol}\chi_{1ol'}).\nonumber
  \end{align}
By (\ref{e:estimate on main perturbation}),  (\ref{e:zero estimate on main perturbation}), Proposition (\ref{p:inverse 2}) with $m=\Big[1+\frac{1}{\varepsilon}\Big]+1$ and parameter assumption (\ref{a:assumption on parameter}) , we have
\begin{align}
\|\mathcal{R}(K_{11})\|_0
\leq& C_m\sum\limits_{j=0}^7\Big(\sum_{i=0}^{m-1}\frac{\|\sum_{[l]=j}\nabla(\beta_{1l}b_{1l})\|_i}{(\lambda_12^j)^{i+1}}
+\frac{\|\sum_{[l]=j}\nabla(\beta_{1l}b_{1l})\|_m}{(\lambda_12^j)^{m}}\Big)\nonumber\\
\leq& C_m\sum\limits_{j=0}^7\delta\Big(\frac{\mu_1}{\lambda_12^j}+\frac{\mu_1^{m+1}+\mu_1\ell^{-m}}{(\lambda_12^j)^m}\Big)
\leq C_m\frac{\delta\mu_1}{\lambda_1}.\nonumber
\end{align}
Moreover, we have
\begin{align}
K_{12}=&\sum\limits_{j=0}^7\sum_{[l]=j}\sum\limits_{l'\in Z^3,1\leq |l'-l|<2}k_1\cdot\nabla(b_{1l}\beta_{1l'})\Big(e^{i\lambda_1(2^j+2^{[l']})k_1^{\perp}\cdot x-ig_{1,l,l'}(t)}
+e^{i\lambda_1(2^j-2^{[l']})k_1^{\perp}\cdot x-i\overline{g}_{1,l,l'}(t)}\nonumber\\
&+e^{i\lambda_1(2^{[l']}-2^{j})k_1^{\perp}\cdot x+i\overline{g}_{1,l,l'}(t)}
+e^{-i\lambda_1(2^j+2^{[l']})k_1^{\perp}\cdot x+ig_{1,l,l'}(t)}\Big),\nonumber
\end{align}
as estimate (\ref{e:estimate stragety for m12}) on $M_{12}$, by (\ref{e:estimate on main perturbation}),  (\ref{e:zero estimate on main perturbation}) and Proposition (\ref{p:inverse 2}), we have
\begin{align}
\|\mathcal{G}(K_{12})\|_0
\leq C_m\frac{\delta\mu_1}{\lambda_1}.\nonumber
\end{align}
A straightforward computation gives
\begin{align}
\partial_tK_{11}=&\sum_{j=0}^7\sum_{[l]=j}\nabla\partial_t(\beta_{1l}b_{1l})\cdot k_1\Big(e^{i\lambda_1 2^j k_1^{\perp}\cdot \big(x-v_{0}(\frac{l}{\mu_1})t\big)}+e^{-i\lambda_1 2^jk_1^{\perp}\cdot \big(x-v_{0}(\frac{l}{\mu_1})t\big)}\Big)\nonumber\\
&-\sum_{j=0}^7\sum_{[l]=j}\nabla(\beta_{1l}b_{1l})\cdot k_{1}i\lambda_1 2^j k_1^{\perp}\cdot v_{0}\Big(\frac{l}{\mu_1}\Big)\Big(e^{i\lambda_1 2^j k_1^{\perp}\cdot \big(x-v_{0}(\frac{l}{\mu_1})t\big)}
-e^{-i\lambda_1 2^jk_1^{\perp}\cdot \big(x-v_{0}(\frac{l}{\mu_1})t\big)}\Big{)},\nonumber
\end{align}
thus, by the same argument
\begin{align}
\|\partial_t\mathcal{G}(K_{11})\|_0\leq C_m\delta\mu_1.\nonumber
\end{align}
A similar argument give $\|\partial_t\mathcal{G}(K_{12})\|_0\leq C_m\delta\mu_1$.\\
By Proposition (\ref{p:inverse 2}), we deduce that
\begin{align}
\|\nabla\mathcal{G}(K_{11})\|_0\leq C_m\delta\mu_1,\qquad\|\nabla\mathcal{G}( K_{12})\|_0\leq C_m\delta\mu_1.\nonumber
\end{align}
We complete the proof of this lemma.
\end{proof}
\end{Lemma}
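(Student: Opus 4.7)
The plan is to mirror the proof of Lemma \ref{e:oscillate estimate}, which bounds $\mathcal{R}({\rm div}\,M_1)$: the structure of $K_1$ is completely parallel to that of $M_1$, except $K_1$ is a scalar-times-vector (rather than matrix) object, so one invokes the scalar primitive operator $\mathcal{G}$ of Proposition \ref{p:inverse 2} in place of $\mathcal{R}$.

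First I would group $K_1$ by the parity class $[l]=j\in\{0,\dots,7\}$ and split off the diagonal self-interactions from the cross terms, obtaining
\begin{align*}
K_1=\sum_{j=0}^{7}\sum_{[l]=j}\beta_{1l}b_{1l}k_1\bigl(e^{2i\lambda_1 2^j k_1^\perp\cdot(x-v_0(\frac{l}{\mu_1})t)}+e^{-2i\lambda_1 2^j k_1^\perp\cdot(x-v_0(\frac{l}{\mu_1})t)}\bigr)+\sum_{l\neq l'}w_{1ol}\chi_{1ol'}.
\end{align*}
Taking the divergence and using the transversality $k_1\cdot k_1^\perp=0$, the diagonal exponentials contribute nothing, so the derivative falls only on the amplitude, yielding ${\rm div}\,K_1=K_{11}+K_{12}$ where
\begin{align*}
K_{11}=\sum_{j=0}^{7}\sum_{[l]=j}k_1\cdot\nabla(\beta_{1l}b_{1l})\bigl(e^{2i\lambda_1 2^j k_1^\perp\cdot(x-v_0(\frac{l}{\mu_1})t)}+e^{-2i\lambda_1 2^j k_1^\perp\cdot(x-v_0(\frac{l}{\mu_1})t)}\bigr),
\end{align*}
and $K_{12}=\sum_{l\neq l'}{\rm div}(w_{1ol}\chi_{1ol'})$ unfolds, exactly as in the $M_{12}$ computation, into finitely many terms of the form $b(t,x)\,e^{i\lambda_1(\pm 2^{j}\pm 2^{[l']})k_1^\perp\cdot x+\mathrm{phase}(t)}$ with $b$ a product of amplitudes.

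Next I would verify the hypotheses of Proposition \ref{p:inverse 2}: each piece is compactly supported and lives in $\Psi$ with common direction $k=k_1^\perp$. The zero-mean condition for the $K_{11}$ components is a direct consequence of $k_1\cdot k_1^\perp=0$ (one integration by parts moves the $k_1$ onto the exponential and produces the vanishing factor), and the $K_{12}$ components are handled identically. With the choice $m=[1+\varepsilon^{-1}]+1$, Proposition \ref{p:inverse 2} together with the amplitude bound $\|\nabla(\beta_{1l}b_{1l})\|_i\leq C_i\delta(\mu_1^{i+1}+\mu_1\ell^{-i})$ from Lemma \ref{l:estimate osc} and the parameter assumption $\lambda_1\geq\max\{\mu_1^{1+\varepsilon},\ell^{-(1+\varepsilon)}\}$ yields
\begin{align*}
\|\mathcal{G}(K_{11})\|_0\leq C_m\sum_{j=0}^{7}\delta\Bigl(\frac{\mu_1}{\lambda_1 2^j}+\frac{\mu_1^{m+1}+\mu_1\ell^{-m}}{(\lambda_1 2^j)^m}\Bigr)\leq C_m\frac{\delta\mu_1}{\lambda_1},
\end{align*}
and an identical template produces the same bound for $\mathcal{G}(K_{12})$.

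For the $C^1_{t,x}$ estimate, differentiating in $t$ or $x$ either acts on the amplitude (and is absorbed into higher-order Lemma \ref{l:estimate osc} estimates involving $\partial_t$-derivatives) or on the exponential (pulling out a factor $\lambda_1 2^j k_1^\perp\cdot v_0(\tfrac{l}{\mu_1})$, bounded by $C_0\lambda_1$, or $\lambda_1$ for a spatial derivative). Reapplying the $\mathcal{G}$-estimate delivers the extra factor $\lambda_1$ and yields $\|\mathcal{G}({\rm div}\,K_1)\|_{C^1_{t,x}}\leq C_0(\varepsilon)\delta\mu_1$. The only genuine obstacle is the bookkeeping: verifying the zero-mean hypothesis for each individual frequency block of $K_{12}$, and carefully tracking the $\varepsilon$-dependence through the choice of $m$. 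No new substantive idea beyond the $\mathcal{R}({\rm div}\,M_1)$ analysis is required.
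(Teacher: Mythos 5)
Your proposal is correct and mirrors the paper's own proof precisely: both split ${\rm div}\,K_1 = K_{11}+K_{12}$, exploit $k_1\cdot k_1^\perp=0$ so the derivative falls only on the slow amplitude, apply Proposition~\ref{p:inverse 2} with $m=\big[1+\tfrac{1}{\varepsilon}\big]+1$, and feed in the amplitude bounds from Lemma~\ref{l:estimate osc} together with the parameter assumption $\lambda_1\geq\max\{\mu_1^{1+\varepsilon},\ell^{-(1+\varepsilon)}\}$ for both the $C^0$ and $C^1_{t,x}$ estimates. Your observation that the zero-mean hypothesis follows by one integration by parts against the pure exponential (producing the vanishing factor $k_1\cdot k_1^\perp$) is also the correct justification, and matches the paper's implicit reasoning.
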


\begin{Lemma}[The transport part]\label{e:trans}
\begin{align}
\Big\|\mathcal{G}\Big(\partial_t\chi_{1}+\sum_{l\in Z^3}v_0\Big(\frac{l}{\mu_{1}}\Big)\cdot\nabla\chi_{1l}\Big)\Big\|_0\leq C_0(\varepsilon) \frac{\sqrt{\delta}\mu_1}{\lambda_1},\nonumber\\
\Big\|\mathcal{G}\Big(\partial_t\chi_{1}+\sum_{l\in Z^3}v_0\Big(\frac{l}{\mu_{1}}\Big)\cdot\nabla\chi_{1l}\Big)\Big\|_{C^1_{t,x}}\leq C_0(\varepsilon)\sqrt{\delta} \mu_1.
\end{align}
\begin{proof}
Recalling the notation of $\chi_1$, we have
\begin{align}
&\partial_t\chi_{1}+\sum_{l\in Z^3}v_0\Big(\frac{l}{\mu_{1}}\Big)\cdot\nabla\chi_{1l}\nonumber\\
=&\sum_{j=0}^7\sum_{[l]=j}\Big\{\Big(\partial_t+v_0\Big(\frac{l}{\mu_{1}}\Big)\cdot\nabla\Big)h_{1l}e^{i\lambda_1 2^j k_1^{\perp}\cdot \big(x-v_{0}(\frac{l}{\mu_1})t\big)}+\Big(\partial_t+v_0\Big(\frac{l}{\mu_{1}}\Big)\cdot\nabla\Big)h_{-1l}e^{-i\lambda_1 2^j k_1^{\perp}\cdot \big(x-v_{0}(\frac{l}{\mu_1})t\big)}\Big\}.\nonumber
\end{align}
By Proposition \ref{p:inverse 2} with $m=\Big[1+\frac{1}{\varepsilon}\Big]+1$ , (\ref{e:estimate on transport amp}), (\ref{e:estimate on transport time derivative}), (\ref{e:zero estimate on transport time derivative}) and parameter assumption(\ref{a:assumption on parameter}), as estimate in (\ref{e:estimate method for velocity transport term}) , we have
\begin{align}
\Big\|\mathcal{G}\Big(\partial_t\chi_{1}+\sum_{l\in Z^3}v_0\Big(\frac{l}{\mu_{1}}\Big)\cdot\nabla\chi_{1l}\Big)\Big\|_0\leq  C_m\sum\limits_{j=0}^7\sqrt{\delta}\Big(\frac{\mu_1}{\lambda_12^j}+\frac{\mu_1^{m+1}+\mu_1\ell^{-m}}{(\lambda_12^j)^m}\Big)
\leq C_m\frac{\sqrt{\delta}\mu_1}{\lambda_1}.\nonumber
\end{align}
And since
\begin{align}
\partial_t\Big(\partial_t\chi_{1}+\sum_{l\in Z^3}v_{0}\Big(\frac{l}{\mu_1}\Big)\cdot\nabla \chi_{1l}\Big)\nonumber
=&\sum_{j=0}^7\sum_{[l]=j}\Big(\Big(\partial_t+v_{0}\Big(\frac{l}{\mu_1}\Big)\cdot\nabla\Big)\partial_th_{1l}e^{i\lambda_1 2^j k_1^{\perp}\cdot \big(x-v_{0}(\frac{l}{\mu_1})t\big)}\nonumber\\
&+\Big(\partial_t+v_0\Big(\frac{l}{\mu_1}\Big)\cdot\nabla\Big)\partial_th_{-1l}e^{-i\lambda_1 2^j k_1^{\perp}\cdot \big(x-v_{0}(\frac{l}{\mu_1})t\big)}\nonumber\\
&-\Big(\partial_t+v_{0}\Big(\frac{l}{\mu_1}\Big)\cdot\nabla\Big)h_{1l}i\lambda_1 2^j k_1^{\perp}\cdot v_{0}\Big(\frac{l}{\mu_1}\Big)e^{i\lambda_1 2^j k_1^{\perp}\cdot \big(x-v_{0}(\frac{l}{\mu_1})t\big)}\nonumber\\
&+\Big(\partial_t+v_{0}\Big(\frac{l}{\mu_1}\Big)\cdot\nabla\Big)h_{-1l}i\lambda_1 2^j k_1^{\perp}\cdot v_{0}\Big(\frac{l}{\mu_1}\Big)e^{i\lambda_1 2^j k_1^{\perp}\cdot \big(x-v_{0}(\frac{l}{\mu_1})t\big)}\Big),\nonumber
\end{align}
then, by the same argument, we obtain
\begin{align}
\Big\|\partial_t\mathcal{G}\Big(\partial_t\chi_{1}+\sum_{l\in Z^3}v_{0}\Big(\frac{l}{\mu_1}\Big)\cdot\nabla \chi_{1l}\Big)\Big\|_0
\leq C_m\sqrt{\delta}\mu_1.\nonumber
\end{align}
Similarly, by Proposition (\ref{p:inverse 2}), we have
\begin{align}
\Big\|\nabla\mathcal{G}\Big(\partial_t\chi_{1}+\sum_{l\in Z^3}v_{0}\Big(\frac{l}{\mu_1}\Big)\cdot\nabla \chi_{1l}\Big)\Big\|_0
\leq C_m\sqrt{\delta}\mu_1.\nonumber
\end{align}
Then we complete our proof of this Lemma.
\end{proof}
\end{Lemma}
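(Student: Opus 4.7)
\medskip

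The plan is to mimic exactly the argument used for the velocity transport part in Lemma \ref{e:trans estimate}, with $h_{\pm 1l}$ playing the role of $k_{\pm 1l}$ and $\mathcal{G}$ replacing $\mathcal{R}$. The key structural observation is that the phase functions $e^{\pm i\lambda_1 2^{[l]} k_1^\perp\cdot(x-v_0(l/\mu_1)t)}$ are annihilated by the frozen transport operator $\partial_t+v_0(l/\mu_1)\cdot\nabla$. Hence, using representation \eqref{r:another representation on perturbation on temperature}, I expand
\[
\partial_t\chi_1+\sum_{l\in Z^3}v_0\Big(\tfrac{l}{\mu_1}\Big)\cdot\nabla\chi_{1l}
=\sum_{j=0}^7\sum_{[l]=j}\Big\{\Big(\partial_t+v_0\big(\tfrac{l}{\mu_1}\big)\cdot\nabla\Big)h_{1l}\,e^{i\lambda_1 2^j k_1^\perp\cdot(x-v_0(\frac{l}{\mu_1})t)}+(\text{conjugate with }h_{-1l})\Big\}.
\]
In particular, no factor of $\lambda_1$ appears in the amplitude, only a bounded multiple of $\Lambda$ coming from $v_0$.

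Next I would apply Proposition \ref{p:inverse 2} to this function, taking $m=[1+1/\varepsilon]+1$ and using the frequencies $\lambda_1 2^j$, $j=0,\dots,7$. The resulting bound is, schematically,
\[
\Big\|\mathcal{G}\Big(\partial_t\chi_1+\sum_{l}v_0(\tfrac{l}{\mu_1})\cdot\nabla\chi_{1l}\Big)\Big\|_0
\;\le\; C_m\sum_{j=0}^7\Big(\sum_{i=0}^{m-1}\frac{\|(\partial_t+v_0\cdot\nabla)h_{\pm 1l}\|_i}{(\lambda_1 2^j)^{i+1}}+\frac{\|(\partial_t+v_0\cdot\nabla)h_{\pm 1l}\|_m}{(\lambda_1 2^j)^m}\Big).
\]
The amplitude estimates follow from combining \eqref{e:estimate on transport amp}--\eqref{e:estimate on transport time derivative} and the zeroth-order versions \eqref{e:zero estimate on transport amp}--\eqref{e:zero estimate on transport time derivative}, together with the product inequality \eqref{i:inequality 1} applied to $v_0(l/\mu_1)\cdot\nabla h_{\pm 1l}$; the $v_0$-factor is harmless since $\|v_0\|_0\le\Lambda$. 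This yields the term-by-term bound $\|(\partial_t+v_0(l/\mu_1)\cdot\nabla)h_{\pm1l}\|_i\le C_i\sqrt\delta(\mu_1^{i+1}+\mu_1\ell^{-i})$.

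Inserting these into the $\mathcal{G}$-estimate and invoking the parameter assumption \eqref{a:assumption on parameter} ($\lambda_1\ge\mu_1^{1+\varepsilon}$ and $\lambda_1\ge\ell^{-(1+\varepsilon)}$), each term of the form $\mu_1^{i+1}/\lambda_1^{i+1}$ is bounded by $\mu_1/\lambda_1$ up to an $\varepsilon$-dependent constant, and similarly for the $\ell^{-i}$ terms. Summing the geometric series gives $\sqrt\delta\mu_1/\lambda_1$, as claimed. For the $C^1_{t,x}$ estimate, I would differentiate once before applying $\mathcal{G}$: the $t$- or $x$-derivative either hits the amplitude (producing a factor $\mu_1$ worse, controlled by \eqref{e:estimate on b1l} and \eqref{e:zero estimate on b1l}) or hits the phase (producing a factor $\lambda_1\cdot\Lambda\le\lambda_1$). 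In both cases the previous bound is multiplied by $\lambda_1$, yielding $\sqrt\delta\mu_1$.

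The main obstacle, such as it is, is purely bookkeeping: one must verify that the mean-zero hypothesis required by Proposition \ref{p:inverse 2} is met for each plane wave. This is automatic since $h_{\pm 1l}\in C^\infty_c(Q_{r+\delta})$ by construction and the phases have nonzero frequency $\lambda_1 2^j k_1^\perp$, so the oscillatory factor forces zero mean upon integration against the compactly supported amplitude. Once this is noted, the rest is a direct transcription of the analogous calculation in Lemma \ref{e:trans estimate}.
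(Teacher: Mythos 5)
Your proposal follows the same route as the paper: express $\partial_t\chi_1+\sum_l v_0(l/\mu_1)\cdot\nabla\chi_{1l}$ via the representation \eqref{r:another representation on perturbation on temperature}, exploit that the frozen transport operator annihilates the phase, feed the transported amplitudes into Proposition \ref{p:inverse 2} with $m=[1+1/\varepsilon]+1$, and close with the parameter assumption \eqref{a:assumption on parameter}. The estimates you derive for $(\partial_t+v_0(l/\mu_1)\cdot\nabla)h_{\pm 1l}$ and the subsequent reduction of all terms to $\sqrt\delta\mu_1/\lambda_1$ are correct, as is the gain of one factor of $\lambda_1$ for the $C^1_{t,x}$ bound.

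One point in your justification is wrong, however, even though the conclusion it supports is true. You claim the mean-zero hypothesis needed for $\mathcal{G}$ is "automatic since $h_{\pm 1l}\in C^\infty_c(Q_{r+\delta})$ and the phases have nonzero frequency." That reasoning is false: the integral $\int_{R^2}\varphi(x)e^{i\lambda k\cdot x}\,dx$ for a compactly supported $\varphi$ is simply $\hat\varphi(-\lambda k)$, which is generically nonzero. The actual reason each frequency block here has zero spatial mean is structural, not oscillatory: by construction $\chi_{1l}=\triangle\bigl(\beta_{1l}(\cdots)\bigr)$, so each of $h_{\pm 1l}e^{\pm i\lambda_1 2^{[l]}k_1^\perp\cdot(\cdots)}$ is the Laplacian of a compactly supported function, and hence integrates to zero; the same is true after applying $\partial_t$ or a constant-coefficient directional derivative $v_0(l/\mu_1)\cdot\nabla$, so $\partial_t\chi_1+\sum_l v_0(l/\mu_1)\cdot\nabla\chi_{1l}\in\Psi$ as recorded after \eqref{d:f0small}. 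You should replace the "oscillation forces zero mean" sentence by this argument. Also, the phrase "$\lambda_1\cdot\Lambda\le\lambda_1$" when a derivative hits the phase is imprecise: the extra factor is $\lambda_1\,|v_0(l/\mu_1)|\le\lambda_1\|v_0\|_0$, and $\|v_0\|_0$ is absorbed into the constant $C_0$ (which, per the convention set at the start of Section 6, is allowed to depend on $\|v_0\|_0$) — it has nothing to do with $\Lambda$.
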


\begin{Lemma}[The error part]\label{e:est 1}
\begin{align}
&\Big\|w_{1c}\chi_1+w_{1o}\chi_{1c}+f_0-f_{0\ell}+\sum_{l\in Z^3}\Big(v_0-v_0\Big(\frac{l}{\mu_{1}}\Big)\Big)\chi_{1l}
+\sum_{l\in Z^3}w_{1l}\Big(\theta_0-\theta_0\Big(\frac{l}{\mu_1}\Big)\Big)\Big\|_0\nonumber\\
\leq& C_0\Big(\frac{\delta\mu_1}{\lambda_1}+\sqrt{\delta}\frac{\Lambda}{\mu_1}+\Lambda\ell\Big),\nonumber\\
&\Big\|w_{1c}\chi_1+w_{1o}\chi_{1c}+f_0-f_{0\ell}+\sum_{l\in Z^3}\Big(v_0-v_0\Big(\frac{l}{\mu_{1}}\Big)\Big)\chi_{1l}
+\sum_{l\in Z^3}w_{1l}\Big(\theta_0-\theta_0\Big(\frac{l}{\mu_1}\Big)\Big)\Big\|_{C^1_{t,x}}\nonumber\\
\leq& C_0\lambda_1\Big(\frac{\delta\mu_1}{\lambda_1}+\sqrt{\delta}\frac{\Lambda}{\mu_1}+\Lambda\ell\Big).
\end{align}
\begin{proof}
Using  Lemma \ref{e:estimate on perturbation and correction}, it's easy to obtain
\begin{align}
\|w_{1c}\chi_1\|_0\leq C_0\frac{\delta\mu_1}{\lambda_1},\quad\|w_{1o}\chi_{1c}\|_0\leq C_0\frac{\delta\mu_1}{\lambda_1},\quad
\|w_{1c}\chi_1\|_{C^1_{t,x}}\leq C_0\delta\mu_1
,\quad\|w_{1o}\chi_{1c}\|_{C^1_{t,x}}\leq C_0\delta\mu_1.\nonumber
\end{align}
Then,
\begin{align}
&\sum_{l\in Z^3}\Big(v_0-v_0\Big(\frac{l}{\mu_{1}}\Big)\Big)\chi_{1l}\nonumber\\
=&\sum\limits_{l\in Z^3}\Big(v_0-v_0\Big(\frac{l}{\mu_{1}}\Big)\Big)h_{1l}e^{i\lambda_1 2^{[l]} k_1^{\perp}\cdot \big(x-v_0(\frac{l}{\mu_{1}})t\big)}+\sum\limits_{l\in Z^3}\Big(v_0-v_0\Big(\frac{l}{\mu_{1}}\Big)\Big)h_{-1l}e^{-i\lambda_1 2^{[l]} k_1^{\perp}\cdot \big(x-v_0(\frac{l}{\mu_{1}})t\big)}.\nonumber
\end{align}
Obviously, $h_{1l}(x,t), h_{-1l}\neq0$ implies $|(\mu_1t,\mu_1 x)-l|\leq1.$
Therefore, following the same strategy as estimate (\ref{e:first part estimate}), we obtain
\begin{align}
\Big\|\sum_{l\in Z^3}\Big(v_0-v_0\Big(\frac{l}{\mu_{1}}\Big)\Big)\chi_{1l}\Big\|_0\leq C_0\sqrt{\delta}\frac{\Lambda}{\mu_1}.\nonumber
\end{align}
Similarly, we have
\begin{align}
\Big\|\sum_{l\in Z^3}w_{1l}\Big(\theta_0-\theta_0\Big(\frac{l}{\mu_1}\Big)\Big)\Big\|_0\leq C_0\sqrt{\delta}\frac{\Lambda}{\mu_1}.\nonumber
\end{align}
By calculation we have
\begin{align}
&\partial_t\Big(\sum_{l\in Z^3}\Big(v_0-v_0\Big(\frac{l}{\mu_{1}}\Big)\Big)\chi_{1l}\Big)\nonumber\\
=&\sum\limits_{l\in Z^3}\Big(v_0-v_0\Big(\frac{l}{\mu_{1}}\Big)\Big)\partial_th_{1l}e^{i\lambda_1 2^{[l]} k_1^{\perp}\cdot \big(x-v_0(\frac{l}{\mu_{1}})t\big)}+\sum\limits_{l\in Z^3}\Big(v_0-v_0\Big(\frac{l}{\mu_{1}}\Big)\Big)\partial_th_{-1l}e^{-i\lambda_1 2^{[l]} k_1^{\perp}\cdot \big(x-v_0(\frac{l}{\mu_{1}})t\big)}\nonumber\\
&-\sum\limits_{l\in Z^3}\Big(v_0-v_0\Big(\frac{l}{\mu_{1}}\Big)\Big)h_{1l}i\lambda_1 2^{[l]} k_1^{\perp}\cdot v_0\Big(\frac{l}{\mu_{1}}\Big)e^{i\lambda_1 2^{[l]} k_1^{\perp}\cdot \big(x-v_0(\frac{l}{\mu_{1}})t\big)}\nonumber\\
&+\sum\limits_{l\in Z^3}\Big(v_0-v_0\Big(\frac{l}{\mu_{1}}\Big)\Big)h_{-1l}i\lambda_1 2^{[l]} k_1^{\perp}\cdot v_0\Big(\frac{l}{\mu_{1}}\Big)e^{i\lambda_1 2^{[l]} k_1^{\perp}\cdot \big(x-v_0(\frac{l}{\mu_{1}})t\big)}\nonumber\\
&+\sum\limits_{l\in Z^3}\partial_tv_0h_{1l}e^{i\lambda_1 2^{[l]} k_1^{\perp}\cdot \big(x-v_0(\frac{l}{\mu_{1}})t\big)}+\sum\limits_{l\in Z^3}\partial_tv_0h_{-1l}e^{-i\lambda_1 2^{[l]} k_1^{\perp}\cdot \big(x-v_0(\frac{l}{\mu_{1}})t\big)}.\nonumber
\end{align}
Therefore, by (\ref{e:zero estimate on transport amp}), (\ref{e:zero estimate on transport time derivative}) and parameter assumption(\ref{a:assumption on parameter})
\begin{align}
\Big\|\partial_t\Big(\sum_{l\in Z^3}\Big(v_0-v_0\Big(\frac{l}{\mu_{1}}\Big)\Big)\chi_{1l}\Big)\Big\|_0\leq C_0\sqrt{\delta}\lambda_1\frac{\Lambda}{\mu_1}.\nonumber
\end{align}
Similarly, we have
\begin{align}
\Big\|\nabla\Big(\sum_{l\in Z^3}\Big(v_0-v_0\Big(\frac{l}{\mu_{1}}\Big)\Big)\chi_{1l}\Big)\Big\|_0\leq C_0\sqrt{\delta}\lambda_1\frac{\Lambda}{\mu_1}.\nonumber
\end{align}
Applying the similar argument, we have
\begin{align}
\Big\|\sum_{l\in Z^3}w_{1l}\Big(\theta_0-\theta_0\Big(\frac{l}{\mu_1}\Big)\Big)\Big\|_{C^1_{t,x}}\leq C_0\sqrt{\delta}\lambda_1\frac{\Lambda}{\mu_1}.\nonumber
\end{align}
By (\ref{e:estimate different})
\begin{align}
\|f_0-f_{0\ell}\|_0 \leq C_0\Lambda\ell,\qquad\|f_0-f_{0\ell}\|_{C^1_{t,x}} \leq C_0\Lambda.\nonumber
\end{align}
Collecting the above estimates together, we complete our proof.
\end{proof}
\end{Lemma}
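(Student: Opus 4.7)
The plan is to bound each of the five summands on the left-hand side independently, using the triangle inequality, and recognize that three of them have already been handled in spirit by the estimates developed for the velocity error part (Lemma \ref{e:err 1} and Lemma \ref{e: err 2}), while the remaining two (the products $w_{1c}\chi_1$ and $w_{1o}\chi_{1c}$) are estimated directly from Lemma \ref{e:estimate on perturbation and correction}. The three resulting bounds $\frac{\delta\mu_1}{\lambda_1}$, $\sqrt{\delta}\frac{\Lambda}{\mu_1}$ and $\Lambda\ell$ will then combine to give the claimed estimate.

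For the pure products $w_{1c}\chi_1$ and $w_{1o}\chi_{1c}$, I would use the $L^\infty$ bounds in Lemma \ref{e:estimate on perturbation and correction} (i.e.\ $\|w_{1o}\|_0,\|\chi_{1o}\|_0\leq C_0\sqrt{\delta}$ and $\|w_{1c}\|_0,\|\chi_{1c}\|_0\leq C_0\sqrt{\delta}\mu_1/\lambda_1$) together with the submultiplicative property of $\|\cdot\|_0$ to obtain the $\delta\mu_1/\lambda_1$ contribution. For the $C^1_{t,x}$ bound the Leibniz rule combined with the $C^1_{t,x}$ estimates in the same lemma produces the factor $\delta\mu_1$, which is exactly $\lambda_1$ times the $L^\infty$ bound. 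The mollification error $f_0-f_{0\ell}$ is handled directly by the convolution estimate \eqref{e:estimate different} and the standard bound $\|f_0-f_{0\ell}\|_{C^1_{t,x}}\leq C_0\Lambda$ (only one space--time derivative is needed, so no regularity is lost).

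The interesting terms are the two commutator-like sums $\sum_l (v_0-v_0(l/\mu_1))\chi_{1l}$ and $\sum_l w_{1l}(\theta_0-\theta_0(l/\mu_1))$. The key observation is that the cutoff $\alpha_l(\mu_1 t,\mu_1 x)$ forces both $\chi_{1l}$ and $w_{1l}$ to vanish outside $|(\mu_1 t,\mu_1 x)-l|\leq 1$, so on the support one has $|(t,x)-l/\mu_1|\leq C/\mu_1$, hence
\begin{align*}
\Big\|\big(v_0-v_0(l/\mu_1)\big)\chi_{1l}\Big\|_0 \leq \frac{\|\nabla v_0\|_0}{\mu_1}\|\chi_{1l}\|_0\leq C_0\sqrt{\delta}\frac{\Lambda}{\mu_1},
\end{align*}
and similarly for $w_{1l}(\theta_0-\theta_0(l/\mu_1))$ using \eqref{e:zero estimate on transport amp}. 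Since the local finiteness of the partition of unity \eqref{p:unity} ensures only $O(1)$ indices $l$ are active at any point, this transfers to the full sum. This is exactly the mechanism already used in the proof of Lemma \ref{e:err 1}, and I would simply repeat that argument with $k_{\pm 1l}$ replaced by $h_{\pm 1l}$ (respectively by themselves, paired with $\theta_0$ instead of $v_0$).

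The main (only) obstacle is the $C^1_{t,x}$ estimate for these two sums: when differentiating the oscillatory factor $e^{\pm i\lambda_1 2^{[l]} k_1^{\perp}\cdot(x-v_0(l/\mu_1)t)}$ in $\chi_{1l}$ or $w_{1l}$, a factor of $\lambda_1$ appears. I would control this by using the representations \eqref{r:second representati on velocity perturbation} and \eqref{r:another representation on perturbation on temperature} together with the time/space derivative bounds \eqref{e:zero estimate on transport time derivative} on $k_{\pm 1l},h_{\pm 1l}$, plus the bound $\|\partial_t v_0\|_0,\|\nabla v_0\|_0,\|\partial_t\theta_0\|_0,\|\nabla\theta_0\|_0\leq \Lambda$. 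Each differentiation either hits the phase (producing $\lambda_1$) or the slowly varying amplitude/difference (producing at most $\mu_1$), and since $\mu_1\leq\lambda_1$ by the parameter assumption \eqref{a:assumption on parameter}, the $\lambda_1$-term dominates and yields precisely the $\lambda_1$-multiplied bound. Adding the five estimates completes the proof.
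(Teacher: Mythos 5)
Your proposal is correct and follows essentially the same route as the paper: term-by-term triangle inequality, the product bounds from Lemma \ref{e:estimate on perturbation and correction}, the mollification bound \eqref{e:estimate different}, and the commutator mechanism based on the support of $\alpha_l(\mu_1 t,\mu_1 x)$ exactly as in Lemma \ref{e:err 1}, with the phase differentiation supplying the dominant $\lambda_1$ factor in the $C^1_{t,x}$ estimates. No gaps; this matches the paper's argument in all essentials.
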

Combining lemma \ref{e:osci}, lemma \ref{e:trans} and lemma \ref{e:est 1}, we conclude
\begin{align}\label{e:first terperture stress estimate}
\|\delta f_{01}\|_0\leq  C_0(\varepsilon)\Big(\frac{\sqrt{\delta}\mu_1}{\lambda_1}+\sqrt{\delta}\frac{\Lambda}{\mu_1}+\Lambda\ell\Big),\quad
\|\delta f_{01}\|_{C^1_{t,x}}\leq  C_0(\varepsilon)\lambda_1\Big(\frac{\sqrt{\delta}\mu_1}{\lambda_1}+\sqrt{\delta}\frac{\Lambda}{\mu_1}+\Lambda\ell\Big).
\end{align}
Finally, by (\ref{f:form R1}), (\ref{f:form g1}), Corollary \ref{e:differerce first estimate}, (\ref{e:estimate on first velocity error}) and (\ref{e:first terperture stress estimate}), we conclude that $(v_{01},p_{01},\theta_{01}, R_{01}, f_{01})\in C_c^{\infty}(Q_{r+\delta})$ solves system (\ref{d:boussinesq reynold}) and satisfies
\begin{align}
 R_{01}(t,x)=-\sum_{i=2}^3 a^2_i(t,x)k_i\otimes k_i+\delta R_{01},\qquad
    f_{01}(t,x):=-c_2(t,x)k_2+\delta f_{01}\nonumber
\end{align}
with
\begin{align}
\|v_{01}-v_0\|_0\leq \frac{M\sqrt{\delta}}{6}+C_0\frac{\sqrt{\delta}\mu_1}{\lambda_1},\quad
\|p_{01}-p_0\|_0\leq& M\delta,\quad\|\theta_{01}-\theta_0\|_0\leq \frac{M\sqrt{\delta}}{4}+C_0\frac{\sqrt{\delta}\mu_1}{\lambda_1},\nonumber\\
\|v_{01}-v_0\|_{C^1_{t,x}}\leq C_0\lambda_1\sqrt{\delta},\qquad
\|p_{01}-p_0\|_{C^1_{t,x}}\leq& C_0,\qquad \|\theta_{01}-\theta_0\|_{C^1_{t,x}}\leq C_0\lambda_1\sqrt{\delta},\nonumber\\
\|\delta R_{01}\|_0\leq C_0(\varepsilon)\Big(\sqrt{\delta}\frac{\mu_1}{\lambda_1}+\sqrt{\delta}\frac{\Lambda}{\mu_1}+\Lambda\ell\Big),\quad&
\|\delta R_{01}\|_{C^1_{t,x}}\leq C_0(\varepsilon)\lambda_1\Big(\sqrt{\delta}\frac{\mu_1}{\lambda_1}+\sqrt{\delta}\frac{\Lambda}{\mu_1}+\Lambda\ell\Big),\nonumber\\
\|\delta f_{01}\|_0\leq C_0(\varepsilon)\Big(\sqrt{\delta}\frac{\mu_1}{\lambda_1}+\sqrt{\delta}\frac{\Lambda}{\mu_1}+\Lambda\ell\Big),\quad&
\|\delta f_{01}\|_{C^1_{t,x}}\leq C_0(\varepsilon)\lambda_1\Big(\sqrt{\delta}\frac{\mu_1}{\lambda_1}+\sqrt{\delta}\frac{\Lambda}{\mu_1}+\Lambda\ell\Big).\nonumber
\end{align}
Thus, we complete the first step.

\setcounter{equation}{0}
\section{Constructions of $(v_{0n}, p_{0n}, \theta_{0n}, R_{0n}, f_{0n})$, $2\leq n\leq 3$}

In this section, we suppose $2\leq n \leq 3$ and construct $(v_{0n}, p_{0n}, \theta_{0n}, R_{0n}, f_{0n})$ by inductions.

Suppose that for $1\leq m<n\leq 3$, $(v_{0m}, p_{0m}, \theta_{0m}, R_{0m}, f_{0m})\in C_c^{\infty}(Q_{r+\delta})$ solves system (\ref{d:boussinesq reynold}) and satisfies
\begin{align}\label{e:form R0m}
 R_{0m}=-\sum_{i=m+1}^3 a^2_ik_i\otimes k_i+\sum_{i=i}^m\delta R_{0i},\quad
    f_{0m}:=-\sum_{i=m+1}^3c_ik_i+\sum_{i=i}^m\delta f_{0i}
\end{align}
with
\begin{align}\label{e:induction difference estimate}
 \|v_{0m}-v_{0(m-1)}\|_0\leq  \frac{M\sqrt{\delta}}{6}+C_0\frac{\sqrt{\delta}\mu_m}{\lambda_m}& ,\quad \|v_{0m}-v_{0(m-1)}\|_{C^1_{t,x}}\leq C_0\lambda_m\sqrt{\delta},\nonumber\\
\|p_{0m}-p_{0(m-1)}\|_0\leq \left\{
\begin{array}{ll}
M\delta,  m=1\\
 0, \quad m=2
 \end{array}
 \right.&\quad
 \|p_{0m}-p_{0(m-1)}\|_{C^1_{t,x}}\leq \left\{
\begin{array}{cc}
C_0,  m=1\\
 0, m=2
 \end{array}
 \right.\nonumber\\
 \|\theta_{0m}-\theta_{0(m-1)}\|_0\leq
  \frac{M\sqrt{\delta}}{6}+C_0\frac{\sqrt{\delta}\mu_m}{\lambda_m},&\quad
 \|\theta_{0m}-\theta_{0(m-1)}\|_{C^1_{t,x}}\leq
 C_0\lambda_m\sqrt{\delta} .
  \end{align}
and
  \begin{align}\label{e:induction error estimate}
\|\delta R_{0m}\|_0\leq C_0(\varepsilon)\Big(\frac{\sqrt{\delta}\mu_m}{\lambda_m}+\sqrt{\delta}\frac{\sqrt{\delta}\lambda_{m-1}}{\mu_m}\Big),\quad&
\|\delta f_{0m}\|_0\leq C_0(\varepsilon)\Big(\frac{\sqrt{\delta}\mu_m}{\lambda_m}+\sqrt{\delta}\frac{\sqrt{\delta}\lambda_{m-1}}{\mu_m}\Big),\nonumber\\
\|\delta R_{0m}\|_{C^1_{t,x}}\leq C_0(\varepsilon)\lambda_m\Big(\frac{\sqrt{\delta}\mu_m}{\lambda_m}+\sqrt{\delta}\frac{\sqrt{\delta}\lambda_{m-1}}{\mu_m}\Big),\quad&
\|\delta f_{0m}\|_{C^1_{t,x}}\leq C_0(\varepsilon)\lambda_m\Big(\frac{\sqrt{\delta}\mu_m}{\lambda_m}+\sqrt{\delta}\frac{\sqrt{\delta}\lambda_{m-1}}{\mu_m}\Big).
\end{align}
Here $(v_{00}, p_{00}, \theta_{00})=(v_0, p_0, \theta_0)$ and the parameter $\mu_m, \lambda_m$ satisfies
\begin{align}\label{a:assumption on parameter m sequence}
  \lambda_m\geq\max\{\mu_m^{1+\varepsilon},\ell^{-(1+\varepsilon)}\}, \quad \mu_m>\mu_{m-1}
  \end{align}
  and $\lambda_0=\Lambda\delta^{-\frac{1}{2}}+\frac{\mu_1\Lambda\ell}{\delta}, \mu_0=1$.
Next, we perform the n-th step.

\subsection{Construction of n-th perturbation on velocity}

   \indent

\subsubsection{Main perturbation $w_{no}$}
For any $l\in Z^3$, we denote $b_{nl}$ by
   \begin{align}\label{d:l amp}
    b_{nl}(t,x):=&\frac{a_n(t,x)\alpha_l(\mu_nt,\mu_nx)}{\sqrt{2}}
    \end{align}
and main $l$-perturbation $w_{1ol}$ by
   \begin{align}\label{d:w 1ol}
    w_{nol}:=b_{nl}k_n\Big(e^{i\lambda_n 2^{[l]} k_n^{\perp}\cdot \big(x-v_{0(n-1)}(\frac{l}{\mu_n})t\big)}+e^{-i\lambda_n 2^{[l]}k_n^{\perp}\cdot \big(x-v_{0(n-1)}(\frac{l}{\mu_n})t\big)}\Big),
   \end{align}
where two parameters $\mu_n$ and $\lambda_n$  will be chosen with
\begin{align}\label{a:n step parameter asssumption}
 \lambda_n\geq\max\{\mu_n^{1+\varepsilon},\ell^{-(1+\varepsilon)}\}, \quad \mu_n>\mu_{n-1}.
  \end{align}
Then we denote n-th main perturbation $w_{no}$ by
   \begin{align}
    w_{no}:=\sum_{l\in Z^3}w_{nol}=\sum_{j=0}^7\sum_{[l]=j}b_{nl}k_n\Big(e^{i\lambda_n 2^j k_n^{\perp}\cdot \big(x-v_{0(n-1)}(\frac{l}{\mu_n})t\big)}+e^{-i\lambda_n 2^jk_n^{\perp}\cdot \big(x-v_{0(n-1)}(\frac{l}{\mu_n})t\big)}\Big).\nonumber
   \end{align}
Obviously, $w_{nol},w_{no}$ are all real 2-dimensional vector-valued functions.

By (\ref{p:unity}), $\hbox{supp} \alpha_l\cap \hbox{supp}\alpha_{l'}=\emptyset$  if $|l-l'|\geq2$, thus the above summation is finite and
   \begin{align}\label{b:bound 3}
  \|w_{no}\|_0\leq \frac{M\sqrt{\delta}}{6}.
  \end{align}
  Moreover, since $a_n(t,x)\in C^{\infty}_c(Q_{r+\delta})$, we know that for any $l\in Z^3$,
  \begin{align}\label{p:support}
  b_{nl}\in C^{\infty}_c(Q_{r+\delta}),\quad w_{nol}\in C^{\infty}_c(Q_{r+\delta})\quad w_{no}\in C^{\infty}_c(Q_{r+\delta}).
\end{align}

 \subsubsection{The correction $w_{nc}$ and the n-th perturbation $w_{n}$}
We denote the $l$-correction $w_{ncl}$ by
   \begin{align}
   w_{ncl}:=&\frac{\nabla^{\perp}b_{nl}}{i\lambda_n 2^{[l]}}\Big(e^{i\lambda_n 2^{[l]} k_1^{\perp}\cdot \big(x-v_{0(n-1)}(\frac{l}{\mu_n})t\big)}-e^{-i\lambda_n 2^{[l]} k_n^{\perp}\cdot \big(x-v_{0(n-1)}(\frac{l}{\mu_n})t\big)}\Big)\nonumber\\
   &-\nabla^{\bot}\Big(\frac{\nabla b_{nl}\cdot k_n^{\perp}}{\lambda_n^22^{2[l]}|k_n|^2}\Big(e^{i\lambda_n 2^{[l]} k_1^{\perp}\cdot \big(x-v_{0(n-1)}(\frac{l}{\mu_n})t\big)}+e^{-i\lambda_n 2^{[l]} k_n^{\perp}\cdot \big(x-v_{0(n-1)}(\frac{l}{\mu_n})t\big)}\Big)\Big).\nonumber
   \end{align}
Then the n-th correction is given by
   \begin{align}
    w_{nc}:=\sum_{l\in Z^3}w_{ncl}.\nonumber
    \end{align}
Finally, we denote n-th perturbation $w_n$ by
   \begin{align}
    w_n:=w_{no}+w_{nc}.\nonumber
   \end{align}
   Thus, if we set
   \begin{align}
   w_{nl}:=w_{nol}+w_{ncl},\nonumber
   \end{align}
   then
   \begin{align}
   w_{nl}=\nabla^{\perp}{\rm div}\Big(-\frac{b_{nl}}{\lambda_n^22^{2[l]}|k_n|^2}k_n^{\perp}2cos\Big(\lambda_n2^{[l]}k_n^{\perp}\cdot \big(x-v_{0(n-1)}(\frac{l}{\mu_n})t\big)\Big)\Big)\nonumber
   \end{align}
and
   \begin{align}
   w_n=\sum\limits_{l\in Z^3}w_{nl},\quad {\rm div}w_{nl}=0,\quad\int_{R^2}w_{nl}dx=0, \quad \int_{R^2}(x_iw_{nlj}-x_jw_{nli})dx=0,\quad i.j=1,2.\nonumber
   \end{align}
In fact
   \begin{align}
   \int_{R^2}(x_1w_{nl2}-x_2w_{nl1})dx=\int_{R^2}(x_1\partial_1{\rm div}\vec{a}_n+x_2\partial_2{\rm div}\vec{a}_n)dx=-2\int_{R^2}{\rm div}\vec{a}_ndx=0,\nonumber
   \end{align}
where $$\vec{a}_n=-\frac{b_{nl}}{\lambda_n^22^{2[l]}|k_n|^2}
k_1^{\perp}2cos\Big(\lambda_n2^{[l]}k_n^{\perp}\cdot \big(x-v_{0(n-1)}(\frac{l}{\mu_n})t\big)\Big)$$
 is a vector-valued smooth function with compact support.
   Obviously, we have
   \begin{align}
   \hbox{div}w_n=0.\nonumber
   \end{align}
Moreover, if we set
\begin{align}
k_{nl}:=&b_{nl}k_n+\frac{\nabla^{\perp}b_{nl}}{i\lambda_n 2^{[l]}}-\nabla^{\bot}\Big(\frac{\nabla b_{nl}\cdot k_n^{\perp}}{\lambda_n^22^{2[l]}|k_n|^2}\Big)+\frac{\nabla b_{nl}\cdot k_n^{\perp}}{i\lambda_n2^{[l]}|k_n|^2}\cdot k_n,\nonumber\\
k_{-nl}:=&b_{nl}k_n+\frac{\nabla^{\perp}b_{nl}}{-i\lambda_n 2^{[l]}}-\nabla^{\bot}\Big(\frac{\nabla b_{nl}\cdot k_n^{\perp}}{\lambda_n^22^{2[l]}|k_n|^2}\Big)+\frac{\nabla b_{nl}\cdot k_n^{\perp}}{-i\lambda_n2^{[l]}|k_n|^2}\cdot k_n,\nonumber
\end{align}
then
\begin{align}\label{d:another definition for n perturbation}
w_{nl}=k_{nl}e^{i\lambda_n 2^{[l]} k_n^{\perp}\cdot \big(x-v_{0(n-1)}(\frac{l}{\mu_n})t\big)}+k_{-nl}e^{-i\lambda_n 2^{[l]}k_n^{\perp}\cdot \big(x-v_{0(n-1)}(\frac{l}{\mu_n})t\big)}.
\end{align}

Furthermore, we have
   \begin{align}
   w_{nol},~w_{ncl},~w_{nl},~w_{no},~w_{nc},~w_n,~k_{nl},~k_{-nl}\in C^{\infty}_c(Q_{r+\delta}).\nonumber
   \end{align}
Thus, we complete the construction of perturbation $w_n$.

\subsection{Construction of n-th perturbation on temperature}

\indent

To construct $\chi_n$, we denote $\beta_{nl}$ by
\begin{align}\label{d:definition nl}
    \beta_{nl}(t,x):=\left \{
    \begin {array}{cc}
    \frac{c_{2\ell}(t,x)\alpha_l(\mu_2t,\mu_2x)}{\sqrt{2e(t,x)}\gamma_{k_2}\big(Id-
    \frac{R_{0\ell}(t,x)}{e(t,x)}\big)}, \quad n=2\\
    0,\quad \quad \quad \quad n=3.
    \end{array}
    \right.
    \end{align}
Since ${\rm supp}c_{2\ell}\subseteq {\rm supp}e$, then $\beta_{2l}(x,t)$ is well-defined.

Then we denote main $l$-perturbation $ \chi_{nol}$ by
    \begin{align}
    \chi_{nol}(t,x):=\left \{
    \begin {array}{cc}
    \beta_{2l}(t,x)\Big(e^{i\lambda_2 2^{[l]} k_2^{\perp}\cdot \big(x-v_{01}(\frac{l}{\mu_2})t\big)}+e^{-i\lambda_2 2^{[l]} k_2^{\perp}\cdot \big(x-v_{01}(\frac{l}{\mu_2})t\big)}\Big), \quad n=2\\
    0,\quad \quad \quad \quad n=3.
    \end{array}
    \right.\nonumber
    \end{align}
    and $l$-correction $\chi_{ncl}$ by
    \begin{align}
    \chi_{ncl}(t,x):=\left \{
    \begin {array}{cc}
    \triangle\beta_{2l}(t,x)\Big(\frac{e^{i\lambda_2 2^{[l]} k_2^{\perp}\cdot \big(x-v_{01}(\frac{l}{\mu_2})t\big)}}{-\lambda_2^22^{2[l]}|k_2|^2}
    +\frac{e^{-i\lambda_2 2^{[l]} k_2^{\perp}\cdot \big(x-v_{01}(\frac{l}{\mu_2})t\big)}}{-\lambda_2^22^{2[l]}|k_2|^2}\Big)
    +2\nabla\beta_{2l}(t,x)\cdot\nonumber\\
    \nabla\Big(\frac{e^{i\lambda_2 2^{[l]} k_2^{\perp}\cdot \big(x-v_{01}(\frac{l}{\mu_2})t\big)}}{-\lambda_2^22^{2[l]}|k_2|^2}
    +\frac{e^{-i\lambda_2 2^{[l]} k_2^{\perp}\cdot \big(x-v_{01}(\frac{l}{\mu_2})t\big)}}{-\lambda_2^22^{2[l]}|k_2|^2}\Big),\quad n=2\\
    0,\quad \quad \quad \quad n= 3.
     \end{array}
    \right.\nonumber
    \end{align}
 Finally, we introduce $\chi_{nl}$ by
 \begin{align}
 \chi_{nl}:=\chi_{nol}+\chi_{ncl}=\left\{
 \begin{array}{cc}
 \triangle\Big(\beta_{2l}\Big(\frac{e^{i\lambda_2 2^{[l]} k_2^{\perp}\cdot \big(x-v_{01}(\frac{l}{\mu_2})t\big)}}{-\lambda_2^22^{2[l]}|k_2|^2}+\frac{e^{-i\lambda_2 2^{[l]} k_2^{\perp}\cdot \big(x-v_{01}(\frac{l}{\mu_2})t\big)}}{-\lambda_2^22^{2[l]}|k_2|^2}\Big)\Big),\quad n=2\\
 0,\quad \quad \quad \quad n=3.
 \end{array}
 \right.\nonumber
 \end{align}
 and $\chi_{no}, \chi_{nc}, \chi_n$ by, respersively,
   \begin{align}
    \chi_{no}:=\sum_{l\in Z^3}\chi_{nol},\quad \chi_{nc}:=\sum_{l\in Z^3}\chi_{ncl},\quad \chi_n:=\sum_{l\in Z^3}\chi_{nl}.\nonumber
   \end{align}
Then $\chi_{nol}, ~\chi_{ncl}, ~\chi_{nl}$ and $\chi_n$ are all real scalar functions and as the perturbations of $w_n$, the summation in their definitions is finite.

Now we set
   \begin{align}
   h_{nl}:=\left\{
   \begin{array}{ll}
   \beta_{2l}-\frac{\triangle\beta_{2l}}{\lambda_2^22^{2[l]}|k_2|^2}
   +2\frac{\nabla\beta_{2l}\cdot k_2^{\perp}}{i\lambda 2^{[l]}|k_2|^2},~~~ n=2\\
   0,\quad \quad \quad \quad n=3.
   \end{array}
   \right.\quad
   h_{-nl}:=\left\{
   \begin{array}{ll}
   \beta_{2l}-\frac{\triangle\beta_{2l}}{\lambda_2^22^{2[l]}|k_2|^2}
   +2\frac{\nabla\beta_{2l}\cdot k_2^{\perp}}{-i\lambda 2^{[l]}|k_2|^2},~~~ n=2\\
    0,\quad \quad \quad \quad n=3.
   \end{array}
   \right.\nonumber
   \end{align}
   then
   \begin{align}
   \chi_{nl}=h_{nl}e^{i\lambda_n 2^{[l]} k_n^{\perp}\cdot \big(x-v_{0(n-1)}(\frac{l}{\mu_n})t\big)}+h_{-nl}e^{-i\lambda_n 2^{[l]} k_n^{\perp}\cdot \big(x-v_{0(n-1)}(\frac{l}{\mu_n})t\big)}.\nonumber
   \end{align}
Moreover, since ${\rm supp} c_{2\ell}\subseteq Q_{r+\delta} $, we know that for all $l\in Z^3$,
 \begin{align}\label{p:support 2}
 \beta_{nl}\in C^{\infty}_c(Q_{r+\delta}),\quad h_{nl}\in C^{\infty}_c(Q_{r+\delta})
 \end{align}
 and
 \begin{align}
 \chi_{nol},\quad \chi_{ncl},\quad \chi_{nl},\quad \chi_{no},\quad \chi_{nc},\quad \chi_{n}\in C^{\infty}_c(Q_{r+\delta}).\nonumber
 \end{align}
Then, by (\ref{p:property on cil}), (\ref{d:definition on p}), (\ref{d:difinition on e}), (\ref{d:definition on m}) and (\ref{d:definition nl}), we know that
  \begin{align}
  \|\beta_{nl}\|_0\leq\left\{
  \begin{array}{ll}
   \frac{M\sqrt{\delta}}{300},~~~n=2\\
   0,~~~~n=3.
   \end{array}
   \right.\nonumber
  \end{align}
Therefore, by (\ref{p:unity})
  \begin{align}\label{e:bound x}
 \|\chi_{no}\|_0\leq\left\{
  \begin{array}{cc}
   \frac{M\sqrt{\delta}}{4},~~~n=2\\
   0,~~~~n=3.
   \end{array}
   \right.
  \end{align}

\subsection{The construction of  $v_{0n}$,~$p_{0n}$,~$\theta_{0n}$,~$f_{0n}$,~$R_{0n}$ }.

\indent
First, we denote
$M_n$ by
\begin{align}
M_n:=&\sum\limits_{l\in Z^3}b^2_{nl}k_n\otimes k_n\Big(e^{2i\lambda_n 2^{[l]} k_n^{\perp}\cdot \big(x-v_{0(n-1)}(\frac{l}{\mu_n})t\big)}
+e^{-2i\lambda_n 2^{[l]} k_n^{\perp}\cdot \big(x-v_{0(n-1)}(\frac{l}{\mu_n})t\big)}\Big)\nonumber\\
&+\sum\limits_{l,l'\in Z^3 ,l\neq l'}w_{nol}\otimes w_{nol'}\nonumber
\end{align}
and
$N_n,K_n$ by
 \begin{align}
   N_n=&\sum_{l\in Z^3}\Big[w_{nl}\otimes \Big(v_{0(n-1)}-v_{0(n-1)}\Big(\frac{l}{\mu_{n}}\Big)\Big)
   +\Big(v_{0(n-1)}-v_{0(n-1)}\Big(\frac{l}{\mu_{n}}\Big)\Big{)}\otimes w_{nl}\Big],\nonumber\\
   K_n=&\sum\limits_{l\in Z^3}\beta_{nl}b_{nl}k_n\Big(e^{2i\lambda_n 2^{[l]} k_n^{\perp}\cdot \big(x-v_{0(n-1)}(\frac{l}{\mu_n})t\big)}
   +e^{-2i\lambda_n 2^{[l]} k_n^{\perp}\cdot \big(x-v_{0(n-1)}(\frac{l}{\mu_n})t\big)}\Big{)}
   +\sum\limits_{l,l'\in Z^3 ,l\neq l'}w_{nol}\chi_{nol'}.\nonumber
    \end{align}
  Then we set
   \begin{align}\label{d:difinition of n sequence}
    v_{0n}:=v_{0(n-1)}+w_n,\quad
    p_{0n}:=&p_{0(n-1)},\quad
    \theta_{0n}:=\theta_{0(n-1)}+\chi_n,\nonumber\\
     R_{0n}:=R_{0(n-1)}+2\sum\limits_{l\in Z^3}b^2_{nl}k_n\otimes k_n+\delta R_{0n},&\quad
    f_{0n}:=f_{0(n-1)}+2\sum\limits_{l\in Z^3}\beta_{nl}b_{nl}k_n+\delta f_{0n},
   \end{align}
where
   \begin{align}
   \delta R_{0n}=&\mathcal{R}(\hbox{div}M_n)+N_n-\mathcal{R}(\chi_ne_2)+\mathcal{R}\Big\{\partial_tw_{n}
   +\hbox{div}\Big[\sum_{l\in Z^3}\Big(w_{nl}\otimes v_{0(n-1)}\Big(\frac{l}{\mu_{n}}\Big)\nonumber\\
   &+v_{0(n-1)}\Big(\frac{l}{\mu_{n}}\Big)\otimes w_{nl}\Big)\Big]\Big\}
   +(w_{no}\otimes w_{nc}+w_{nc}\otimes w_{no}+w_{nc}\otimes w_{nc}),\nonumber
   \end{align}
and
   \begin{align}\label{d:definition f0n}
   \delta f_{0n}=\left\{
   \begin{array}{ll}
   \mathcal{G}(\hbox{div}K_2)
   +\mathcal{G}\Big(\partial_t\chi_{2}+\sum_{l\in Z^3}v_{01}\Big(\frac{l}{\mu_2}\Big)\cdot\nabla\chi_{2l}\Big)+w_{2o}\chi_{2c}
   +\sum_{l\in Z^3}\Big(v_{01}-v_{01}\Big(\frac{l}{\mu_2}\Big)\Big)\chi_{2l}\\
   +w_{2c}\chi_2+\sum_{l\in Z^3}w_{2l}\Big(\theta_{01}-\theta_{01}\Big(\frac{l}{\mu_2}\Big)\Big),\qquad n=2\\
   \sum_{l\in Z^3}w_{3l}\Big(\theta_{02}-\theta_{02}\Big(\frac{l}{\mu_3}\Big)\Big), \qquad n=3.
   \end{array}
   \right.
  \end{align}
Since $$\hbox{div}M_n,~\chi_ne_2, ~\partial_tw_{n},~~
\hbox{div}\Big[\sum_{l\in Z^3}\Big(w_{nl}\otimes v_{0(n-1)}\Big(\frac{l}{\mu_{n}}\Big)
   +v_{0(n-1)}\Big(\frac{l}{\mu_{n}}\Big)\otimes w_{nl}\Big)\Big]\in\Xi,$$
   so $\delta R_{0n}$ is well-defined. Moreover,
   $$\hbox{div}K_2,~~
   \partial_t\chi_{2}+\sum_{l\in Z^3}v_{01}\Big(\frac{l}{\mu_2}\Big)\cdot\nabla\chi_{2l}\in\Psi,$$
thus $\delta f_{0n}$ is well-defined.
 By Proposition \ref{p:proposition R} and Corollary \ref{pro:support of inverse operator}, we know that $\delta R_{0n}$ is a symmetric matrix and $\delta R_{0n}\in
    C_c^{\infty}(Q_{r+\delta})$. Also, by Proposition \ref{p:inverse 2} and Corollary \ref{pro:support of inverse operator}, we have $\delta f_{0n}\in
    C_c^{\infty}(Q_{r+\delta})$.
Obviously,
   \begin{align*}
    \hbox{div}v_{0n}=\hbox{div}v_{0(n-1)}+\hbox{div}w_{n}=0.
   \end{align*}
 Moreover, from the definition (\ref{d:difinition of n sequence}) on $(v_{0n}, p_{0n}, \theta_{0n}, R_{0n}, f_{0n})$ and the fact that $(v_{0(n-1)}, p_{0(n-1)},\theta_{0(n-1)}, \\
 R_{0(n-1)}, f_{0(n-1)})$ solves the system (\ref{d:boussinesq reynold}), Proposition \ref{p:proposition R}, we know that
\[\begin{aligned}
    \hbox{div}R_{0n}=&\hbox{div}R_{0(n-1)}+\partial_tw_n-\chi_ne_2
    +\hbox{div}(w_{no}\otimes w_{no}+w_{n}\otimes v_{0(n-1)}+v_{0(n-1)}\otimes w_{n}\nonumber\\
    &+w_{no}\otimes w_{nc}+w_{nc}\otimes w_{no}+w_{nc}\otimes w_{nc})\nonumber\\
    =&\partial_tv_{0(n-1)}+\hbox{div}(v_{0(n-1)}\otimes v_{0(n-1)})+\nabla p_{0(n-1)}-\theta_{0(n-1)}e_2+\partial_tw_n
    -\chi_ne_2\nonumber\\
    &+\hbox{div}(w_{no}\otimes w_{no}+w_{n}\otimes v_{0(n-1)}+v_{0(n-1)}\otimes w_{n}
    +w_{no}\otimes w_{nc}+w_{nc}\otimes w_{no}+w_{nc}\otimes w_{nc})\nonumber\\
    =&\partial_tv_{0n}+\hbox{div}(v_{0n}\otimes v_{0n})+\nabla p_{0n}-\theta_{0n}e_2.
    \end{aligned}\]
Where we used
$$\hbox{div}(M_n)+\hbox{div}\Big(2\sum\limits_{l\in Z^3}b^2_{nl}k_n\otimes k_n\Big)=\hbox{div}(w_{no}\otimes w_{no}).$$
  Furthermore, by (\ref{d:difinition of n sequence}) and (\ref{d:definition f0n}),  we have
  \[ \begin{aligned}
    f_{02}=&f_{01}+2\sum\limits_{l\in Z^3}\beta_{2l}b_{2l}k_2+\mathcal{G}(\hbox{div}K_2)
   +w_{2o}\chi_{2c}
   +\mathcal{G}\Big(\partial_t\chi_{2}+\sum_{l\in Z^3}v_{01}\Big(\frac{l}{\mu_2}\Big)\cdot\nabla\chi_{2l}\Big)\nonumber\\
   &+\sum_{l\in Z^3}\Big(v_0-v_{01}\Big(\frac{l}{\mu_2}\Big)\Big)\chi_{2l}+w_{2c}\chi_2+\sum_{l\in Z^3}w_{2l}\Big(\theta_{01}-\theta_{01}\Big(\frac{l}{\mu_2}\Big)\Big).\nonumber
   \end{aligned}\]
From the fact that $(v_{01}, p_{01}, \theta_{01}, R_{01}, f_{01})$ solves the system (\ref{d:boussinesq reynold}) and Proposition \ref{p:inverse 2} we have
 \begin{align}
 \hbox{div}f_{02}=&\hbox{div}f_{01}+\partial_{t}\chi_2
 +\hbox{div}(w_{2o}\chi_2+w_{2c} \chi_2+v_{01} \chi_2+w_2 \theta_{01})\nonumber\\
 =&\hbox{div}(v_{01}\theta_{01}+w_{2o}\chi_2+w_{2c} \chi_2+v_{01} \chi_2+w_2 \theta_{01})
 +\partial_t(\theta_{01}+\chi_2)\nonumber\\
 =&\partial_t\theta_{02}+\hbox{div}(v_{02}\theta_{02}),\nonumber
 \end{align}
 where we used
 \begin{align}
 \hbox{div}K_2+\hbox{div}(2\sum\limits_{l\in Z^3}\beta_{2l}b_{2l}k_2)=\hbox{div}(w_{2o}\chi_{2o}).\nonumber
 \end{align}
 Thus, the functions $(v_{02},p_{02},\theta_{02},R_{02},f_{02})$ satisfies the system (\ref{d:boussinesq reynold}).
And from the definition (\ref{d:difinition of n sequence}) on $\delta f_{03}$, we have
\begin{align}
\hbox{div}f_{03}=\hbox{div}f_{02}+\hbox{div}\delta f_{03}=\partial_t \theta_{02}+v_{02}\cdot\nabla\theta_{02}+w_3\cdot\nabla \theta_{02}=\partial_t \theta_{03}+v_{03}\cdot\nabla\theta_{03}.\nonumber
\end{align}
 Thus the functions $(v_{03},p_{03},\theta_{03},R_{03},f_{03})$ also solves the system (\ref{d:boussinesq reynold}).

\setcounter{equation}{0}
\section{The n-th Representation}

\indent

In this section, we will calculate the form of
$$R_{0n}+2\sum\limits_{l\in Z^3}b^2_{nl}k_n\otimes k_n={\tilde I}$$
and
$$f_{0n}+2\sum\limits_{l\in Z^3}\beta_{nl}b_{nl}k_n=\tilde {II}.$$

\subsection{The term $\tilde I$}

\indent

First, by (\ref{d:l amp}), we have
\begin{align}
2\sum\limits_{l\in Z^3}b^2_{nl}k_n\otimes k_n
=\sum\limits_{l\in Z^3}\alpha_l^2(\mu_nt,\mu_n x)a^2_nk_n\otimes k_n
=a^2_nk_n\otimes k_n.\nonumber
\end{align}
Where we used $\sum\limits_{l\in Z^3}\alpha_l^2=1$.
Therefore, by (\ref{e:form R0m}), we have
\begin{align}
R_{0(n-1)}+2\sum\limits_{l\in Z^3}b^2_{nl}k_n\otimes k_n
=-\sum_{i=n+1}^3 a^2_ik_i\otimes k_i+\sum_{i=i}^{n-1}\delta R_{0i}.\nonumber
\end{align}
Meanwhile, by (\ref{d:difinition of n sequence}), we have
\begin{align}
R_{0n}=-\sum_{i=n+1}^3 a^2_i(t,x)k_i\otimes k_i+\sum_{i=i}^n\delta R_{0i}.\nonumber
\end{align}
In particular,
\begin{align}
R_{03}=\sum_{i=1}^3\delta R_{0i}.\nonumber
\end{align}
In next section, we will prove that $\delta R_{0n}$ is small.\\

\subsection{The term $\tilde {II}$}

\indent

Then, by (\ref{d:l amp}) and (\ref{d:definition nl}), we have
\begin{align}
2\sum\limits_{l\in Z^3}\beta_{2l}b_{2l}k_2
=\sum\limits_{l\in Z^3}\alpha_l^2(\mu_2t,\mu_2 x)c_2k_2
=c_2k_2.\nonumber
\end{align}
From the identity (\ref{f:form g1}), we have
\begin{align}
&f_{01}+2\sum\limits_{l\in Z^3}\beta_{2l}b_{2l}k_2=\delta f_{01}.\nonumber
\end{align}
Meanwhile, by (\ref{d:difinition of n sequence}), we have
\begin{align}
f_{02}=&f_{01}+2\sum\limits_{l\in Z^3}\beta_{2l}b_{2l}k_2+\delta f_{02}
=\sum_{i=1}^2\delta f_{0i}.\nonumber
\end{align}
Since $\beta_{nl}=0$ when  $n=3$, then
\begin{align}
f_{03}=&f_{02}+2\sum\limits_{l\in Z^3}\beta_{3l}b_{3l}k_3+\delta f_{03}
=\sum_{i=1}^3\delta f_{0i}.\nonumber
\end{align}
In next section, we will prove that $\delta g_{0n}$ is small.\\
\setcounter{equation}{0}
 \section{Estimates on $\delta R_{0n}$ and $\delta g_{0n}$}

 \indent

We summarize the main estimates of $b_{nl}$ and $\beta_{nl}$.
\begin{Lemma}\label{l:n step estimate osc}
For any $l\in Z^3$ and any integer $m\geq 1$, we have
\begin{align}
\|b_{nl}\|_m+\|\beta_{nl}\|_m\leq& C_m\sqrt{\delta}(\mu_n^m+\mu_n\ell^{-(m-1)}),\label{e:n step estimate on main perturbation}\\
\|\partial_tb_{nl}\|_m+\|\partial_t\beta_{nl}\|_m\leq& C_m\sqrt{\delta}(\mu_n^{m+1}+\mu_n\ell^{-m}),\\
\|\partial_{tt}b_{nl}\|_m+\|\partial_{tt}\beta_{nl}\|_m\leq& C_m\sqrt{\delta}(\mu_n^{m+2}+\mu_n\ell^{-m-1}),\\
\|k_{\pm nl}\|_m+\|h_{\pm nl}\|_m\leq& C_m\sqrt{\delta}(\mu_n^m+\mu_n\ell^{-(m-1)}),\\
\|\partial_tk_{\pm nl}\|_m+\|\partial_th_{\pm nl}\|_m\leq& C_m\sqrt{\delta}(\mu_n^{m+1}+\mu_n\ell^{-m}),\\
\|\partial_{tt}k_{\pm nl}\|_m+\|\partial_{tt}h_{\pm nl}\|_m\leq &C_m\sqrt{\delta}(\mu_n^{m+2}+\mu_n\ell^{-m-1})\label{e:n step estimate on b1l}
\end{align}
and
\begin{align}
\|b_{nl}\|_0+\|\beta_{nl}\|_0\leq& C_0\sqrt{\delta},\label{e:zero n step estimate on main perturbation}\\
\|\partial_tb_{nl}\|_0+\|\partial_t\beta_{nl}\|_0\leq& C_0\sqrt{\delta}\mu_n,\\
\|\partial_{tt}b_{nl}\|_0+\|\partial_{tt}\beta_{nl}\|_0\leq& C_0\sqrt{\delta}(\mu_n^{2}+\mu_n\ell^{-1}),\\
\|k_{\pm nl}\|_0+\|h_{\pm nl}\|_0\leq& C_0\sqrt{\delta},\\
\|\partial_tk_{\pm nl}\|_0+\|\partial_th_{\pm nl}\|_0\leq& C_0\sqrt{\delta}\mu_n,\\
\|\partial_{tt}k_{\pm nl}\|_0+\|\partial_{tt}h_{\pm nl}\|_0\leq &C_0\sqrt{\delta}(\mu_n^{2}+\mu_n\ell^{-1}).\label{e:zero n step estimate on b1l}
\end{align}
\begin{proof}
The proof is similar to Lemma \ref{l:estimate osc}, we omit the detail here.
\end{proof}
\end{Lemma}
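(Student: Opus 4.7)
My plan is to mirror the proof of Lemma 6.1 almost verbatim, replacing the parameter $\mu_1$ by $\mu_n$ throughout and using the induction-compatible parameter assumption (\ref{a:n step parameter asssumption}) in place of (\ref{a:assumption on parameter}). The crucial observation that allows direct transplantation is that the auxiliary quantities entering the definitions of $b_{nl}$ and $\beta_{nl}$ — namely $\rho$, $e = \rho^2$, the mollified stresses $R_{0\ell}, c_{n\ell}$, and the fixed vectors $k_n$ with their associated $\gamma_{k_n}$ from the Geometric Lemma — depend only on the initial data and the mollification parameter $\ell$, not on the inductive step $n$. Likewise the explicit formulas for $k_{\pm nl}$ and $h_{\pm nl}$ are linear in derivatives of $b_{nl}$ and $\beta_{nl}$ divided by powers of $\lambda_n 2^{[l]}$, so their bounds are immediate consequences of bounds on $b_{nl}$, $\beta_{nl}$ together with (\ref{a:n step parameter asssumption}).

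The concrete steps are as follows. First I would bound $\gamma_{k_n}(Id - R_{0\ell}/e)$ via the composition inequality (\ref{i:composition inequality 2}), using the identity $\nabla^i(R_{0\ell}/e) = \nabla^i(R_{0\ell})/e$ valid since $\{\nabla e \neq 0\} \cap \{R_{0\ell} \neq 0\} = \emptyset$, together with the convolution bound (\ref{e:estimate higher convolution}); the outcome is $[\gamma_{k_n}(Id-R_{0\ell}/e)(t,\cdot)]_m \leq C_m(\mu_n^m + \mu_n \ell^{-(m-1)})$ thanks to $\mu_n \geq \mu_1 \geq \Lambda/\delta$ and $\ell^{-1} \geq \Lambda/(\eta\delta)$. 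Combining this with the elementary estimates on $\rho$ and $\alpha_l(\mu_n t, \mu_n x)$ via (\ref{i:inequality 1}), and using the bound $\|c_{n\ell}\|_0 \leq 2\delta$ together with the composition estimate for $1/\sqrt{e}$, yields (\ref{e:n step estimate on main perturbation}) and (\ref{e:zero n step estimate on main perturbation}) for both $b_{nl}$ and $\beta_{nl}$.

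Next, for the time-derivative estimates, I would introduce the auxiliary function $\Gamma_n(t,x) := \frac{\rho}{\sqrt 2}\gamma_{k_n}(Id - R_{0\ell}/e)$ and compute $\partial_t \Gamma_n, \partial_{tt}\Gamma_n$ using $\partial_t(R_{0\ell}/e) = (\partial_t R_0)_\ell / e$ on the support of $R_{0\ell}$, exactly as in Lemma 6.1. Writing $b_{nl}(t,x) = \Gamma_n(t,x)\alpha_l(\mu_n t, \mu_n x)$ and applying the Leibniz rule produces factors $\mu_n^k$ from differentiating $\alpha_l$; combined with the product inequality (\ref{i:inequality 1}) this gives the required estimates on $\partial_t b_{nl}$, $\partial_{tt} b_{nl}$ and similarly for $\beta_{nl}$. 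Finally, the bounds on $k_{\pm nl}$ and $h_{\pm nl}$ and their time derivatives follow by inspecting the explicit formulas defining them and invoking $\lambda_n \geq \mu_n^{1+\varepsilon} \geq \mu_n$, so that division by $\lambda_n 2^{[l]}$ and $\lambda_n^2 2^{2[l]}|k_n|^2$ does not worsen the scaling.

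There is no real obstacle here beyond careful bookkeeping of indices: the analysis is insensitive to the step number because the amplitudes $b_{nl}, \beta_{nl}$ are constructed from the same underlying data $(\rho, R_{0\ell}, c_{n\ell})$ with only the frequency parameter $\mu_n$ and the direction $k_n$ changing. In particular, the phase factors $e^{\pm i\lambda_n 2^{[l]} k_n^{\perp}\cdot(x - v_{0(n-1)}(l/\mu_n)t)}$ do not enter the definitions of $b_{nl}, \beta_{nl}, k_{\pm nl}, h_{\pm nl}$ themselves — they multiply these amplitudes in the construction of $w_{nl}, \chi_{nl}$ — so no new difficulty arises from the fact that the background velocity used in the transport phase is $v_{0(n-1)}$ rather than $v_0$. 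Consequently every line of the proof of Lemma 6.1 transposes directly, and the stated estimates (\ref{e:n step estimate on main perturbation})--(\ref{e:zero n step estimate on b1l}) follow.
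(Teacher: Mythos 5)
Your proposal is correct and takes essentially the same route the paper intends: the authors themselves write only ``The proof is similar to lemma \ref{l:estimate osc}'' and omit the details, and your transposition — replacing $\mu_1$ by $\mu_n$, using (\ref{a:n step parameter asssumption}) in place of (\ref{a:assumption on parameter}), and noting that the amplitudes $b_{nl},\beta_{nl},k_{\pm nl},h_{\pm nl}$ are built from the $n$-independent data $\rho, R_{0\ell}, c_{n\ell}, \gamma_{k_n}$ so that the change of transport velocity from $v_0$ to $v_{0(n-1)}$ in the phases is irrelevant — is precisely the intended argument. Your observation about the phases not entering the amplitude bounds is the one genuinely non-obvious point, and you identify it correctly.
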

Next, we give estimates on perturbations $w_{no}, w_{nc}, \chi_{no}, \chi_{nc}$.
 \begin{Lemma}[Estimate on perturbation]\label{e:n step estimate on perturbation and correction}
 \begin{align}
 \|w_{no}\|_0\leq &C_0\sqrt{\delta},\quad  \|w_{no}\|_{C^1_{t,x}}\leq C_0\sqrt{\delta}\lambda_n,\quad  \|\chi_{no}\|_0\leq C_0\sqrt{\delta},\quad  \|\chi_{no}\|_{C^1_{t,x}}\leq C_0\sqrt{\delta}\lambda_n,\nonumber\\
 \|w_{nc}\|_0\leq& C_0\frac{\sqrt{\delta}\mu_n}{\lambda_n},\quad  \|w_{nc}\|_{C^1_{t,x}}\leq C_0\sqrt{\delta}\mu_n,\quad
 \|\chi_{nc}\|_0\leq C_0\frac{\sqrt{\delta}\mu_n}{\lambda_n},\quad  \|\chi_{nc}\|_{C^1_{t,x}}\leq C_0\sqrt{\delta}\mu_n.
\end{align}
 \begin{proof}
 The proof is similar to Lemma \ref{e:estimate on perturbation and correction}, we omit the detail here.
 \end{proof}
 \end{Lemma}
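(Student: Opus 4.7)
The proof is essentially the same as that of Lemma 6.2, but with index $1$ replaced by $n$ and with $v_0$ replaced by $v_{0(n-1)}$; the only inputs needed are the amplitude estimates in Lemma 7.1 and the parameter assumption \eqref{a:n step parameter asssumption}. Since the paper already carried out this argument in detail in the first-step case, I would simply indicate the book-keeping and cite those inputs. Here is the plan.

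\textbf{Step 1: $C^0$ bounds on the main perturbations.} By construction, $w_{no}(t,x)=\sum_{l\in Z^3} b_{nl}k_n\bigl(e^{i\lambda_n 2^{[l]} k_n^{\perp}\cdot(x-v_{0(n-1)}(l/\mu_n)t)}+\text{c.c.}\bigr)$ and analogously for $\chi_{no}$ with amplitudes $\beta_{nl}$. The partition-of-unity property \eqref{p:unity} gives that at any fixed $(t,x)$ only finitely many (at most $30$) terms are nonzero. Combined with \eqref{e:zero n step estimate on main perturbation}, i.e.\ $\|b_{nl}\|_0+\|\beta_{nl}\|_0\leq C_0\sqrt{\delta}$, this immediately gives $\|w_{no}\|_0+\|\chi_{no}\|_0\leq C_0\sqrt{\delta}$.

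\textbf{Step 2: $C^1$ bounds on the main perturbations.} Differentiating $w_{no}$ in $t$ or $x$ produces two types of terms: a derivative hits the amplitude $b_{nl}$, contributing at most $C_0\sqrt{\delta}\mu_n$ by \eqref{e:n step estimate on main perturbation}; or a derivative hits the oscillatory exponential, contributing at most $C_0\lambda_n(1+\|v_{0(n-1)}\|_0)\sqrt{\delta}$. Since $\mu_n\leq\lambda_n^{1/(1+\varepsilon)}\leq\lambda_n$ by \eqref{a:n step parameter asssumption}, the second contribution dominates and yields $\|w_{no}\|_{C^1_{t,x}}\leq C_0\sqrt{\delta}\lambda_n$. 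The exact same argument with $\beta_{nl}$ in place of $b_{nl}$ gives the bound on $\|\chi_{no}\|_{C^1_{t,x}}$.

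\textbf{Step 3: Corrections.} The correction $w_{ncl}$ comprises two blocks, one with prefactor $1/(i\lambda_n 2^{[l]})$ multiplying $\nabla^\perp b_{nl}$ and one with prefactor $1/(\lambda_n^2 2^{2[l]}|k_n|^2)$ multiplying $\nabla^\perp(\nabla b_{nl}\cdot k_n^\perp)$. Applying \eqref{e:n step estimate on main perturbation} with $m=1,2$ and using the parameter assumption \eqref{a:n step parameter asssumption} (which ensures $\ell^{-1}\leq\lambda_n^{1/(1+\varepsilon)}\leq\lambda_n$ so the $\mu_n\ell^{-(m-1)}$ contribution is absorbed into $\mu_n^m$ at the scales we care about), the dominant contribution gives $\|w_{ncl}\|_0\leq C_0\sqrt{\delta}\mu_n/\lambda_n$. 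Summing over the finitely many nonzero $l$ yields $\|w_{nc}\|_0\leq C_0\sqrt{\delta}\mu_n/\lambda_n$. For the $C^1$ bound, differentiation either hits the amplitudes (multiplying by $\mu_n$ via \eqref{e:n step estimate on main perturbation}--higher) or the exponential (multiplying by $\lambda_n(1+\|v_{0(n-1)}\|_0)$ which is compensated by the $1/\lambda_n$ prefactor, leaving a bound of order $\sqrt{\delta}\mu_n$). This gives $\|w_{nc}\|_{C^1_{t,x}}\leq C_0\sqrt{\delta}\mu_n$. The corresponding estimates on $\chi_{nc}$ are obtained in exactly the same way, using the explicit formula for $\chi_{ncl}$ and the estimates \eqref{e:n step estimate on main perturbation}--\eqref{e:zero n step estimate on b1l} applied to $\beta_{nl}$; here the two prefactors $1/(\lambda_n^2 2^{2[l]}|k_n|^2)$ produce the same $\mu_n/\lambda_n$ gain.

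\textbf{Main obstacle / remark.} There is no real obstacle: all nonoscillatory bookkeeping is already handled by Lemma 7.1, the partition of unity bounds the number of active modes by an absolute constant, and the parameter ordering \eqref{a:n step parameter asssumption} makes the oscillatory derivative the dominant contribution in the $C^1$ norm. The only care needed is that in the case $n=3$ one has $\beta_{3l}=0$, so the $\chi_{no},\chi_{nc}$ bounds are trivially true; for $n=2$ the proof is literally that of Lemma 6.2 with $k_1$ replaced by $k_2$ and $v_0$ replaced by $v_{01}$.
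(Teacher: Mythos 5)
Your proposal is correct and follows exactly the route the paper intends: the paper itself dispenses with this proof by remarking that it is "similar to Lemma \ref{e:estimate on perturbation and correction}," and you have simply filled in the index substitution $1\mapsto n$, $v_0\mapsto v_{0(n-1)}$, invoking Lemma \ref{l:n step estimate osc} in place of Lemma \ref{l:estimate osc} and the parameter relation \eqref{a:n step parameter asssumption} in place of \eqref{a:assumption on parameter}. Your observations that $\|v_{0(n-1)}\|_0$ stays controlled by $\|v_0\|_0$ (so the constant is legitimate) and that the $n=3$ temperature perturbations vanish are the only points worth recording, and you record both.
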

\begin{Corollary}\label{e:n sequence difference estimate}
\begin{align}
&\|v_{0n}-v_{0(n-1)}\|_0\leq \frac{M\sqrt{\delta}}{6}+C_0\frac{\sqrt{\delta}\mu_n}{\lambda_n},\quad \|v_{0n}-v_{0(n-1)}\|_{C^1_{t,x}}\leq C_0\lambda_n\sqrt{\delta},\quad p_{0n}-p_{0(n-1)}=0,\nonumber\\
&\|\theta_{02}-\theta_{01}\|_0\leq \frac{M\sqrt{\delta}}{4}+C_0\frac{\sqrt{\delta}\mu_2}{\lambda_2},\quad
  \|\theta_{02}-\theta_{01}\|_{C^1_{t,x}}\leq C_0\sqrt{\delta}\lambda_2,\quad \theta_{03}-\theta_{02}=0.\nonumber
\end{align}
\end{Corollary}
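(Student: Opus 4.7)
The corollary is essentially a direct read-off from the construction (\ref{d:difinition of n sequence}) combined with the perturbation estimates already established in Lemma \ref{l:n step estimate osc} and Lemma \ref{e:n step estimate on perturbation and correction}. I would organize the proof as a case-by-case bookkeeping, with the key observation being that the cosmetic asymmetry between $n=2$ and $n=3$ in the temperature bound is entirely accounted for by the vanishing convention $\beta_{3l}=0$ in (\ref{d:definition nl}).

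First, for the velocity: by (\ref{d:difinition of n sequence}) we have $v_{0n}-v_{0(n-1)} = w_n = w_{no}+w_{nc}$. The bound $\|w_{no}\|_0 \le M\sqrt{\delta}/6$ is just (\ref{b:bound 3}), and the bound $\|w_{nc}\|_0 \le C_0\sqrt{\delta}\mu_n/\lambda_n$ is the correction estimate in Lemma \ref{e:n step estimate on perturbation and correction}; summing them yields the $C^0$ estimate. For the $C^1_{t,x}$ estimate, Lemma \ref{e:n step estimate on perturbation and correction} gives $\|w_{no}\|_{C^1_{t,x}}\le C_0\sqrt\delta \lambda_n$ and $\|w_{nc}\|_{C^1_{t,x}}\le C_0\sqrt\delta\mu_n$; since the parameter assumption (\ref{a:n step parameter asssumption}) forces $\mu_n\le \lambda_n^{1/(1+\varepsilon)}\le \lambda_n$, the first term dominates.

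Second, the pressure identity $p_{0n}-p_{0(n-1)}=0$ is immediate: the definition (\ref{d:difinition of n sequence}) sets $p_{0n}:=p_{0(n-1)}$ with no perturbation added at the $n$-th step ($n\ge 2$).

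Third, for the temperature: again $\theta_{0n}-\theta_{0(n-1)}=\chi_n=\chi_{no}+\chi_{nc}$ by (\ref{d:difinition of n sequence}). For $n=2$, the estimate $\|\chi_{2o}\|_0\le M\sqrt\delta/4$ is (\ref{e:bound x}), the estimate $\|\chi_{2c}\|_0\le C_0\sqrt\delta \mu_2/\lambda_2$ is the correction bound in Lemma \ref{e:n step estimate on perturbation and correction}, and the $C^1_{t,x}$ bound follows by the same $\mu_n\le \lambda_n$ comparison as in the velocity case. For $n=3$, the definition (\ref{d:definition nl}) forces $\beta_{3l}\equiv 0$, hence $\chi_{3ol}=\chi_{3cl}=\chi_{3l}=0$ for every $l$, and therefore $\chi_3\equiv 0$, giving $\theta_{03}-\theta_{02}=0$.

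There is no real obstacle here; the only thing worth double-checking is that the implicit constant in the $C^1_{t,x}$ estimate absorbs the $\mu_n$ contribution of $w_{nc}$ (and $\chi_{2c}$) without needing a sharper inequality — which it does precisely because (\ref{a:n step parameter asssumption}) ensures $\mu_n\le \lambda_n$. The estimate therefore holds without any additional argument beyond invoking the lemmas above.
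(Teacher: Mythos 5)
Your proof is correct and follows exactly the route the paper intends: the paper omits the proof of this corollary precisely because it is an immediate consequence of the definitions in (\ref{d:difinition of n sequence}), the $C^0$ bounds (\ref{b:bound 3}) and (\ref{e:bound x}), the correction estimates in Lemma \ref{e:n step estimate on perturbation and correction}, the parameter hierarchy $\mu_n\le\lambda_n$ from (\ref{a:n step parameter asssumption}), and the vanishing convention $\beta_{3l}\equiv 0$ in (\ref{d:definition nl}). Nothing more is needed, and your bookkeeping is accurate.
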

\subsection{Estimates on $\delta R_{0n}$}

 \indent

 Recalling that
  \begin{align}
   \delta R_{0n}=&\mathcal{R}(\hbox{div}M_n)+N_n-\mathcal{R}(\chi_ne_2)+\mathcal{R}\Big\{\partial_tw_{n}
   +\hbox{div}\Big[\sum_{l\in Z^3}\Big(w_{nl}\otimes v_{0(n-1)}\Big(\frac{l}{\mu_{n}}\Big)\nonumber\\
   &+v_{0(n-1)}\Big(\frac{l}{\mu_{n}}\Big)\otimes w_{nl}\Big)\Big]\Big\}
   +(w_{no}\otimes w_{nc}+w_{nc}\otimes w_{no}+w_{nc}\otimes w_{nc}).\nonumber
   \end{align}
 Again, we split the stress into three parts: \\
  (1)The oscillation part $$\mathcal{R}(\hbox{div}M_n)-\mathcal{R}(\chi_ne_2).$$
  (2)The transport part
  \begin{align}
  &\mathcal{R}\Big\{\partial_tw_{n}
   +\hbox{div}\Big[\sum_{l\in Z^3}\Big(w_{nl}\otimes v_{0(n-1)}\Big(\frac{l}{\mu_{n}}\Big)+v_{0(n-1)}\Big(\frac{l}{\mu_{n}}\Big)\otimes w_{nl}\Big)\Big]\Big\}\nonumber\\
   =&\mathcal{R}\Big(\partial_tw_{n}+\sum_{l\in Z^3}v_{0(n-1)}\Big(\frac{l}{\mu_{n}}\Big)\cdot\nabla w_{nl}\Big).\nonumber
   \end{align}
  (3)The error part
  \begin{align}
  N_n+(w_{no}\otimes w_{nc}+w_{nc}\otimes w_{no}+w_{nc}\otimes w_{nc}).\nonumber
   \end{align}
In the following we will estimate each term separately. Beside the estimates of $N_n$, the proof of other estimates are same as in Section 6. We only give the proof of the estimates on $N_n$ and omit the others here.

\begin{Lemma}[The oscillation part]\label{e:oscillation estimate n step}
\begin{align}
\|\mathcal{R}({\rm div}M_n)\|_0\leq C_0(\varepsilon)\frac{\delta\mu_n}{\lambda_n},\quad
\|\mathcal{R}(\chi_ne_2)\|_0\leq C_0(\varepsilon)\frac{\sqrt{\delta}}{\lambda_n},\nonumber\\
\|\mathcal{R}({\rm div}M_n)\|_{C^1_{t,x}}\leq C_0(\varepsilon)\delta\mu_n,\quad
\|\mathcal{R}(\chi_ne_2)\|_{C^1_{t,x}}\leq C_0(\varepsilon)\sqrt{\delta}.
\end{align}
\end{Lemma}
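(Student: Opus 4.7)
The plan is to mirror the proof of Lemma 6.1 verbatim, with parameters relabeled to step $n$. First I would split $M_n = M_{n1} + M_{n2}$, where $M_{n1}$ collects the diagonal terms $\sum_{l} b_{nl}^2\, k_n\otimes k_n \bigl(e^{\pm 2i\lambda_n 2^{[l]} k_n^{\perp}\cdot (\cdots)}\bigr)$ and $M_{n2} = \sum_{l\neq l'} w_{nol}\otimes w_{nol'}$. Because $k_n\cdot k_n^{\perp}=0$, computing $\hbox{div}\, M_{n1}$ only differentiates $b_{nl}^2$ and produces a sum of the form
\begin{align*}
M_{n1}' := \sum_{j=0}^{7}\sum_{[l]=j} k_n\otimes k_n\, \nabla(b_{nl}^2)\Bigl(e^{2i\lambda_n 2^j k_n^{\perp}\cdot(x-v_{0(n-1)}(l/\mu_n)t)} + e^{-2i\lambda_n 2^j k_n^{\perp}\cdot(x-v_{0(n-1)}(l/\mu_n)t)}\Bigr).
\end{align*}
Apply Proposition 3.2 with $m = [1+1/\varepsilon]+1$. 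The amplitude estimates in Lemma 9.1 give $\|\nabla(b_{nl}^2)\|_i \le C_i\sqrt{\delta}(\mu_n^{i+1}+\mu_n\ell^{-i})$, and the parameter assumption $\lambda_n \ge \max\{\mu_n^{1+\varepsilon},\ell^{-(1+\varepsilon)}\}$ then yields $\|\mathcal{R}(M_{n1}')\|_0 \le C_0(\varepsilon)\,\delta\mu_n/\lambda_n$, exactly as in the $n=1$ computation.

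Next, for the cross terms $M_{n2}$, the partition of unity forces $|l-l'|<2$ so only finitely many nonzero pairs appear, and expanding each product of exponentials gives the four phases $e^{\pm i\lambda_n(2^{[l]}\pm 2^{[l']})k_n^{\perp}\cdot x + \cdots}$ with purely time-dependent offsets. Following the same argument leading to (\ref{e:estimate stragety for m12}) in the proof of Lemma 6.1, Proposition 3.2 with the same choice of $m$ produces $\|\mathcal{R}(M_{n2})\|_0 \le C_0(\varepsilon)\,\delta\mu_n/\lambda_n$. For $\mathcal{R}(\chi_n e_2)$, the compact representation $\chi_n = \sum_{j}\sum_{[l]=j}\bigl(h_{nl}e^{i\lambda_n 2^j k_n^{\perp}\cdot(\cdots)} + h_{-nl}e^{-i\lambda_n 2^j k_n^{\perp}\cdot(\cdots)}\bigr)$ lets me apply Proposition 3.2 directly; the $h_{\pm nl}$ bounds from Lemma 9.1 give $\|\mathcal{R}(\chi_n e_2)\|_0 \le C_0(\varepsilon)\sqrt{\delta}/\lambda_n$.

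The $C^1_{t,x}$ estimates are obtained by differentiating the above representations in $t$ or $x$. Each derivative falling on an exponential produces a factor of $\lambda_n$ multiplying the corresponding $C^0$ bound; each derivative falling on an amplitude is controlled by the higher-order estimates in Lemma 9.1 (for example $\|\partial_t b_{nl}\|_m$ and $\|\partial_t h_{\pm nl}\|_m$), and by the parameter assumption these are dominated by the $\lambda_n$-gain from the exponential. Summing the four resulting terms recovers exactly the stated $C^1_{t,x}$ bounds.

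Since the argument is entirely parallel to Section 6, the only substantive bookkeeping is to verify that the cancellation $k_n\cdot k_n^{\perp}=0$, the finiteness of the sum forced by $\alpha_l\alpha_{l'}\equiv 0$ for $|l-l'|\ge 2$, and the parameter inequality (\ref{a:n step parameter asssumption}) all remain in force at step $n$; all three are built into the construction. The main (mild) obstacle is simply absorbing the $\ell^{-m}$ terms coming from convolution-induced regularity of $R_{0\ell}$ into the leading $\mu_n/\lambda_n$ bound, which is why one chooses $m=[1+1/\varepsilon]+1$ together with $\lambda_n\ge\ell^{-(1+\varepsilon)}$.
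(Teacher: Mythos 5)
Your proof is correct and follows exactly the approach the paper intends: the paper does not write out a proof of this lemma at all, simply stating that ``the proof of other estimates are same as in section 6,'' and your argument is precisely the relabeled $n$-step version of the proof of Lemma \ref{e:oscillate estimate}, using the $M_{n1}/M_{n2}$ split, the cancellation $k_n\cdot k_n^{\perp}=0$, Proposition \ref{p:proposition R} with $m=[1+1/\varepsilon]+1$, the amplitude bounds from Lemma \ref{l:n step estimate osc}, the local finiteness of the $l$-sum, and the parameter assumption (\ref{a:n step parameter asssumption}) to absorb the $\ell^{-m}$ tail.
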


\begin{Lemma}[The transport part]\label{e:transport estimate n}
\begin{align}
\Big\|\mathcal{R}\Big(\partial_tw_{n}+\sum_{l\in Z^3}v_{0(n-1)}\Big(\frac{l}{\mu_{n}}\Big)\cdot\nabla w_{nl}\Big)\Big\|_0\leq C_0(\varepsilon)\frac{\sqrt{\delta}\mu_n}{\lambda_n},\nonumber\\
\Big\|\mathcal{R}\Big(\partial_tw_{n}+\sum_{l\in Z^3}v_{0(n-1)}\Big(\frac{l}{\mu_{n}}\Big)\cdot\nabla w_{nl}\Big)\Big\|_{C^1_{t,x}}\leq C_0(\varepsilon)\sqrt{\delta}\mu_n.
\end{align}
\end{Lemma}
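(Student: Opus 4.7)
The plan is to mimic the proof of Lemma~6.3 verbatim, with indices shifted from $1$ to $n$ and $v_0$ replaced by $v_{0(n-1)}$; the machinery is entirely parallel, so the main task is to verify that the building blocks still work with the general-$n$ estimates of Lemma~9.1 and the parameter assumption (\ref{a:n step parameter asssumption}).

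First I would rewrite $w_n$ using the form (\ref{d:another definition for n perturbation}), so that
\begin{align*}
w_n=\sum_{j=0}^{7}\sum_{[l]=j}\Big(k_{nl}\,e^{i\lambda_n 2^j k_n^{\perp}\cdot(x-v_{0(n-1)}(l/\mu_n)t)}+k_{-nl}\,e^{-i\lambda_n 2^j k_n^{\perp}\cdot(x-v_{0(n-1)}(l/\mu_n)t)}\Big).
\end{align*}
The key observation, exactly as in the first step, is the transport cancellation
\begin{align*}
\Big(\partial_t+v_{0(n-1)}\Big(\tfrac{l}{\mu_n}\Big)\cdot\nabla\Big)e^{\pm i\lambda_n 2^{[l]}k_n^{\perp}\cdot(x-v_{0(n-1)}(l/\mu_n)t)}=0,
\end{align*}
because $k_n^{\perp}\cdot v_{0(n-1)}(l/\mu_n)$ appears with matching signs in the two terms from $\partial_t$ and $v_{0(n-1)}(l/\mu_n)\cdot\nabla$. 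Thus the oscillatory factors commute out and only the coefficients $k_{\pm nl}$ are differentiated, leaving
\begin{align*}
\partial_t w_n+\sum_{l}v_{0(n-1)}\Big(\tfrac{l}{\mu_n}\Big)\cdot\nabla w_{nl}=\sum_{j=0}^{7}\sum_{[l]=j}\Big(D_l k_{nl}\,e^{i\lambda_n 2^j k_n^{\perp}\cdot(\cdots)}+D_l k_{-nl}\,e^{-i\lambda_n 2^j k_n^{\perp}\cdot(\cdots)}\Big),
\end{align*}
where $D_l:=\partial_t+v_{0(n-1)}(l/\mu_n)\cdot\nabla$.

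Next, I would apply Proposition~\ref{p:proposition R} with $m=[1+1/\varepsilon]+1$ to this expression. For each $j\in\{0,\dots,7\}$, the bound on $\|\mathcal{R}(\cdot)\|_0$ is controlled by
$\sum_{i=0}^{m-1}(\lambda_n 2^j)^{-(i+1)}\|\sum_{[l]=j}D_l k_{\pm nl}\|_i+(\lambda_n 2^j)^{-m}\|\sum_{[l]=j}D_l k_{\pm nl}\|_m$.
Using Lemma~9.1, specifically the bounds
$\|D_l k_{\pm nl}\|_i\leq C_i\sqrt{\delta}(\mu_n^{i+1}+\mu_n\ell^{-i})$
(which follow by combining $\|\partial_t k_{\pm nl}\|_i$ with $\|v_{0(n-1)}\|_0\cdot\|\nabla k_{\pm nl}\|_i$, noting $\|v_{0(n-1)}\|_0\leq C_0$ by the inductive construction), together with the parameter assumption $\lambda_n\geq\max\{\mu_n^{1+\varepsilon},\ell^{-(1+\varepsilon)}\}$, each term is dominated by $\sqrt{\delta}\mu_n/\lambda_n$, yielding the first inequality. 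The second $C^1_{t,x}$ estimate follows by the same argument after differentiating the sum once more: time derivatives produce either an extra $D_l\partial_t k_{\pm nl}$ (bounded by Lemma~9.1 at one higher time order) or an extra factor $\lambda_n 2^j k_n^{\perp}\cdot v_{0(n-1)}(l/\mu_n)$ from differentiating the phase, which gives the advertised gain of $\lambda_n$.

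The only subtlety, and hence the main sanity check, is to ensure that the general-$n$ convolution/composition estimates on $b_{nl},\beta_{nl}$ (and therefore on $k_{\pm nl},h_{\pm nl}$) still produce exactly the same powers $\mu_n^{i}+\mu_n\ell^{-(i-1)}$ as in the $n=1$ case. This was already stated in Lemma~9.1 and proved there by the same argument as Lemma~6.1, so the estimate telescopes through the geometric sum in $i$ and we collect all terms into $C_0(\varepsilon)\sqrt{\delta}\mu_n/\lambda_n$ (respectively $C_0(\varepsilon)\sqrt{\delta}\mu_n$) as claimed. No new idea is required beyond that of Lemma~6.3.
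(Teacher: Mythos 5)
Your proposal is correct and follows exactly the route the paper intends: the paper states at the start of the $N_n$ estimate discussion in Section~9 that ``the proof of other estimates are same as in section 6'' and omits them, and your argument is precisely that of Lemma~\ref{e:trans estimate} with the index shift $1\to n$, $v_0\to v_{0(n-1)}$, and the general-$n$ coefficient bounds from Lemma~\ref{l:n step estimate osc} feeding into Proposition~\ref{p:proposition R} under the parameter assumption~(\ref{a:n step parameter asssumption}). Your auxiliary observation that $\|v_{0(n-1)}\|_0\le C_0$, which makes the operator $D_l=\partial_t+v_{0(n-1)}(l/\mu_n)\cdot\nabla$ harmless, is implicit in the paper's convention that $C_0$ depends on $\|v_0\|_0$ and follows from Corollary~\ref{e:n sequence difference estimate}, so no gap remains.
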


\begin{Lemma}[Estimate on error part I]\label{e:error n1}
\begin{align}
\|N_n\|_0\leq C_0 \delta\frac{\lambda_{(n-1)}}{\mu_n},\qquad \|N_n\|_{C^1_{t,x}}\leq C_0\lambda_n\delta\frac{\lambda_{(n-1)}}{\mu_n}.
\end{align}
\begin{proof} First, we have
 \begin{align}
 N_{n}=&\sum_{l\in Z^3}\Big[w_{nl}\otimes \Big(v_{0(n-1)}-v_{0(n-1)}\Big(\frac{l}{\mu_n}\Big)\Big)
   +\Big(v_{0(n-1)}-v_{0(n-1)}\Big(\frac{l}{\mu_n}\Big)\Big)\otimes w_{nl}\Big{)}\Big].\nonumber
\end{align}
By (\ref{d:another definition for n perturbation}) , we have
\begin{align}
&\sum_{l\in Z^3}w_{nl}\otimes\Big( v_{0(n-1)}-v_{0(n-1)}\Big(\frac{l}{\mu_n}\Big)\Big)\nonumber\\
=&\sum_{l\in Z^3}\Big(k_{nl}e^{i\lambda_n 2^{[l]} k_n^{\perp}\cdot \big(x-v_{0(n-1)}(\frac{l}{\mu_n})t\big)}+k_{-nl}e^{-i\lambda_1 2^{[l]}k_n^{\perp}\cdot \big(x-v_{0(n-1)}(\frac{l}{\mu_n})t\big)}\Big{)}\otimes\Big( v_{0(n-1)}-v_{0(n-1)}\Big(\frac{l}{\mu_n}\Big)\Big).\nonumber
\end{align}
Obviously, $k_{nl}(x,t)\neq0$ implies $|(\mu_nt,\mu_n x)-l|\leq1$. Moreover, by (\ref{e:induction difference estimate}) and parameter assumption (\ref{a:assumption on parameter}), we get $\|\nabla_{t,x} v_{0(n-1)}\|_0\leq C_0\sqrt{\delta}\lambda_{n-1}$, therefore
$$\Big|k_{nl}\Big(v_{0(n-1)}-v_{0(n-1)}\Big(\frac{l}{\mu_n}\Big)\Big)\Big|\leq C_0\sqrt{\delta}\frac{\|\nabla_{t,x} v_{0(n-1)}\|_0}{\mu_n}\leq C_0\delta\frac{\lambda_{(n-1)}}{\mu_n}.$$
Similarly,
$$\Big|k_{-nl}\Big(v_{0(n-1)}-v_{0(n-1)}\Big(\frac{l}{\mu_n}\Big)\Big)\Big|\leq C_0\sqrt{\delta}\frac{\|\nabla_{t,x} v_{0(n-1)}\|_0}{\mu_n}\leq C_0\delta\frac{\lambda_{(n-1)}}{\mu_n}.$$
Together with (\ref{p:unity}), it is easy to see
\begin{align}
\Big\|\sum_{l\in Z^3}w_{nl}\otimes \Big(v_{0(n-1)}-v_{0(n-1)}\Big(\frac{l}{\mu_n}\Big)\Big)\Big\|_0\leq C_0\delta\frac{\lambda_{(n-1)}}{\mu_n}.\nonumber
\end{align}
Applying the same argument
\begin{align}
\Big\|\sum_{l\in Z^3}\Big(v_{0(n-1)}-v_{0(n-1)}\Big(\frac{l}{\mu_n}\Big)\Big)\otimes w_{nl}\Big\|_0\leq C_0\delta\frac{\lambda_{(n-1)}}{\mu_n}.\nonumber
\end{align}
Thus, we arrive at the first estimate in this lemma.
A straightforward computation gives
\begin{align}
&\partial_t\Big(\sum_{l\in Z^3}w_{nl}\otimes\Big( v_{0(n-1)}-v_{0(n-1)}\Big(\frac{l}{\mu_n}\Big)\Big)\Big)\nonumber\\
=&\sum_{l\in Z^3}\partial_tw_{nl}\otimes\Big( v_{0(n-1)}-v_{0(n-1)}\Big(\frac{l}{\mu_n}\Big)\Big)+\sum_{l\in Z^3}w_{nl}\otimes \partial_tv_{0(n-1)}.\nonumber
\end{align}
Thus, by (\ref{e:induction difference estimate}), parameter assumption (\ref{a:assumption on parameter}) and Corrollary \ref{e:n sequence difference estimate}
\begin{align}
\Big\|\partial_t\Big(\sum_{l\in Z^3}w_{nl}\otimes \Big(v_{0(n-1)}-v_{0(n-1)}\Big(\frac{l}{\mu_n}\Big)\Big)\Big)\Big\|_0\leq C_0\lambda_n\delta\frac{\lambda_{(n-1)}}{\mu_n}.\nonumber
\end{align}
The same argument gives
\begin{align}
\Big\|\nabla\Big(\sum_{l\in Z^3}w_{nl}\otimes \Big(v_{0(n-1)}-v_{0(n-1)}\Big(\frac{l}{\mu_n}\Big)\Big)\Big)\Big\|_0\leq C_0\lambda_n\delta\frac{\lambda_{(n-1)}}{\mu_n},\nonumber\\
\Big\|\partial_t\Big(\sum_{l\in Z^3}\Big(v_{0(n-1)}-v_{0(n-1)}\Big(\frac{l}{\mu_n}\Big)\Big)\otimes w_{nl}\Big)\Big\|_0\leq C_0\lambda_n\delta\frac{\lambda_{(n-1)}}{\mu_n},\nonumber\\
\Big\|\nabla\Big(\sum_{l\in Z^3}\Big(v_{0(n-1)}-v_{0(n-1)}\Big(\frac{l}{\mu_n}\Big)\Big)\otimes w_{nl}\Big)\Big\|_0\leq C_0\lambda_n\delta\frac{\lambda_{(n-1)}}{\mu_n}.\nonumber
\end{align}
Finally, collecting these estimates, we arrive at
\begin{align}
\|N_{n}\|_{C^1_{t,x}}\leq C_0\lambda_n\delta\frac{\lambda_{(n-1)}}{\mu_n}.\nonumber
\end{align}
Thus, the proof of this lemma is complete.
\end{proof}
\end{Lemma}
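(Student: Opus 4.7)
The plan is to estimate $N_n$ using the oscillatory representation of $w_{nl}$ together with the localization built into the partition of unity. By \eqref{d:another definition for n perturbation} we may write
\begin{align*}
w_{nl}=k_{nl}e^{i\lambda_n 2^{[l]} k_n^{\perp}\cdot (x-v_{0(n-1)}(l/\mu_n)t)}+k_{-nl}e^{-i\lambda_n 2^{[l]}k_n^{\perp}\cdot (x-v_{0(n-1)}(l/\mu_n)t)},
\end{align*}
so that $w_{nl}(t,x)\neq 0$ forces $\alpha_l(\mu_n t,\mu_n x)\neq 0$, which by \eqref{p:unity} means $|(\mu_n t,\mu_n x)-l|\leq c_2<1$. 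Consequently $|(t,x)-l/\mu_n|\leq 1/\mu_n$, and by the mean value theorem,
\begin{align*}
\Big|v_{0(n-1)}(t,x)-v_{0(n-1)}\Big(\tfrac{l}{\mu_n}\Big)\Big|\leq \frac{\|\nabla_{t,x}v_{0(n-1)}\|_0}{\mu_n}.
\end{align*}

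For the $L^\infty$ bound, I would combine this pointwise difference estimate with $\|k_{\pm nl}\|_0\leq C_0\sqrt{\delta}$ from Lemma \ref{l:n step estimate osc}. The key auxiliary input is to propagate a bound on $\|\nabla_{t,x}v_{0(n-1)}\|_0$ through the induction: writing $v_{0(n-1)}=v_0+\sum_{m=1}^{n-1}(v_{0m}-v_{0(m-1)})$ and using the inductive hypothesis \eqref{e:induction difference estimate}, namely $\|v_{0m}-v_{0(m-1)}\|_{C^1_{t,x}}\leq C_0\lambda_m\sqrt{\delta}$, together with $\sqrt{\delta}\lambda_0\geq \Lambda$ and the monotonicity $\lambda_m\leq \lambda_{n-1}$ for $m\leq n-1$, gives $\|\nabla_{t,x}v_{0(n-1)}\|_0\leq C_0\sqrt{\delta}\lambda_{n-1}$. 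Together with the finite-overlap property of the $\alpha_l$'s, summing over $l$ this yields
\begin{align*}
\Big\|\sum_{l\in Z^3}w_{nl}\otimes\Big(v_{0(n-1)}-v_{0(n-1)}\Big(\tfrac{l}{\mu_n}\Big)\Big)\Big\|_0\leq C_0\sqrt{\delta}\cdot \frac{\sqrt{\delta}\lambda_{n-1}}{\mu_n}=C_0\delta\frac{\lambda_{n-1}}{\mu_n},
\end{align*}
and the same bound for the transposed term, proving the first inequality.

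For the $C^1_{t,x}$ bound, I would differentiate termwise. Space and time derivatives fall either on the smooth amplitude $k_{\pm nl}$ (no factor of $\lambda_n$), on the phase $e^{\pm i\lambda_n 2^{[l]}k_n^\perp\cdot(\cdot)}$ (producing a factor of $\lambda_n$ via Lemma \ref{l:n step estimate osc}-type estimates and parameter assumption \eqref{a:n step parameter asssumption}, since $\mu_n\leq \lambda_n$), or on the slow factor $v_{0(n-1)}(t,x)-v_{0(n-1)}(l/\mu_n)$, which by the product rule contributes $\|\partial v_{0(n-1)}\|_0\leq C_0\sqrt{\delta}\lambda_{n-1}$. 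The worst of these three types controls the rest, and pairing the dominant $\lambda_n$-factor from the exponential with the $\mu_n^{-1}$ smallness of the pointwise difference yields the claimed $C_0\lambda_n\delta\lambda_{n-1}/\mu_n$ bound.

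The main technical point to verify carefully is the inductive estimate $\|\nabla_{t,x}v_{0(n-1)}\|_0\leq C_0\sqrt{\delta}\lambda_{n-1}$: one must check that the telescoping sum $\sum_{m=1}^{n-1}\lambda_m\sqrt{\delta}$ is indeed dominated by $\sqrt{\delta}\lambda_{n-1}$ (which follows from the growth $\lambda_m\ll \lambda_{m+1}$ imposed by \eqref{a:assumption on parameter m sequence}) and that the base contribution $\|\nabla_{t,x}v_0\|_0\leq \Lambda$ is absorbed into $\sqrt{\delta}\lambda_0$ via the definition $\lambda_0=\Lambda\delta^{-1/2}+\mu_1\Lambda\ell/\delta$. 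Everything else is bookkeeping with the explicit Fourier ansatz.
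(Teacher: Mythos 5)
Your proposal is correct and follows essentially the same route as the paper: write $w_{nl}$ in the phase-amplitude form from the representation, use the support of $\alpha_l$ together with the mean value theorem to gain a factor $\mu_n^{-1}$ on the commutator $v_{0(n-1)}-v_{0(n-1)}(l/\mu_n)$, bound $\|k_{\pm nl}\|_0\lesssim\sqrt\delta$, propagate $\|\nabla_{t,x}v_{0(n-1)}\|_0\lesssim\sqrt\delta\lambda_{n-1}$ inductively, and for the $C^1$ estimate observe that differentiating the oscillatory phase produces the dominant $\lambda_n$ factor. The paper states the inductive bound on $\|\nabla_{t,x}v_{0(n-1)}\|_0$ without spelling out the telescoping sum, which you fill in correctly; everything else matches.
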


\begin{Lemma}[Estimates on error part II]\label{e: error n2}
\begin{align}
\|w_{no}\otimes w_{nc}+w_{nc}\otimes w_{no}+w_{nc}\otimes w_{nc}\|_0\leq C_0\frac{\delta\mu_n}{\lambda_n},\nonumber\\
\|w_{no}\otimes w_{nc}+w_{nc}\otimes w_{no}+w_{nc}\otimes w_{nc}\|_{C^1_{t,x}}\leq C_0\delta\mu_n.\nonumber
\end{align}
\end{Lemma}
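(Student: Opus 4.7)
The plan is to mimic verbatim the proof of the first--step counterpart Lemma \ref{e: err 2}, since the only ingredients used there are the product rule and the $C^0$/$C^1$ bounds on $w_{1o}$ and $w_{1c}$, and the corresponding bounds on $w_{no}$ and $w_{nc}$ have already been recorded in Lemma \ref{e:n step estimate on perturbation and correction}. No new cancellation or oscillation argument is needed; this is purely an algebraic estimate controlled by the sub-multiplicativity of the supremum norm.

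For the $C^0$ bound, I would apply $\|fg\|_0\le\|f\|_0\|g\|_0$ to each of the three tensor products and invoke Lemma \ref{e:n step estimate on perturbation and correction} to get
\begin{align*}
\|w_{no}\otimes w_{nc}\|_0+\|w_{nc}\otimes w_{no}\|_0+\|w_{nc}\otimes w_{nc}\|_0
\le 2\|w_{no}\|_0\|w_{nc}\|_0+\|w_{nc}\|_0^{2}
\le C_0\Bigl(\tfrac{\delta\mu_n}{\lambda_n}+\tfrac{\delta\mu_n^{2}}{\lambda_n^{2}}\Bigr),
\end{align*}
and then absorb the second term into the first using $\mu_n\le\lambda_n$ (which follows from the parameter assumption \eqref{a:n step parameter asssumption}, since $\lambda_n\ge\mu_n^{1+\varepsilon}\ge\mu_n$). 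This yields exactly the claimed bound $C_0\delta\mu_n/\lambda_n$.

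For the $C^1_{t,x}$ bound, I would use the inequality \eqref{i:inequality 1} in the combined space-time variable, giving
\begin{align*}
\|w_{no}\otimes w_{nc}\|_{C^1_{t,x}}
\le C_0\bigl(\|w_{no}\|_{C^1_{t,x}}\|w_{nc}\|_0+\|w_{no}\|_0\|w_{nc}\|_{C^1_{t,x}}\bigr)
\le C_0\bigl(\sqrt{\delta}\lambda_n\cdot\tfrac{\sqrt{\delta}\mu_n}{\lambda_n}+\sqrt{\delta}\cdot\sqrt{\delta}\mu_n\bigr)\le C_0\delta\mu_n,
\end{align*}
and analogously for $w_{nc}\otimes w_{no}$ and $w_{nc}\otimes w_{nc}$ (the last one contributes $C_0\|w_{nc}\|_0\|w_{nc}\|_{C^1_{t,x}}\le C_0\sqrt{\delta}\mu_n\cdot\sqrt{\delta}\mu_n/\lambda_n=C_0\delta\mu_n^{2}/\lambda_n$, which is again dominated by $C_0\delta\mu_n$ thanks to $\mu_n\le\lambda_n$). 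Summing the three contributions gives the $C^1_{t,x}$ claim.

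There is no real obstacle here: the lemma is a pointwise product estimate and the needed component bounds are already in place. The only point to be careful about is verifying, from \eqref{a:n step parameter asssumption}, that $\mu_n\le\lambda_n$ so that the correction-squared term does not dominate; everything else is a one-line application of the product rule and Lemma \ref{e:n step estimate on perturbation and correction}.
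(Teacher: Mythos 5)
Your proposal is correct and is essentially the same as the paper's argument: the paper explicitly omits this proof, stating that it is identical to the first-step analogue Lemma \ref{e: err 2}, which is exactly the product-rule-plus-component-bounds calculation you carry out. You are slightly more careful than the paper's Lemma \ref{e: err 2} in spelling out the $\|w_{nc}\|_0\|w_{nc}\|_{C^1_{t,x}}$ contribution and the absorption via $\mu_n\le\lambda_n$, but this is the same argument, just written out fully.
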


From Lemma \ref{e:oscillation estimate n step}, Lemma \ref{e:transport estimate n}, Lemma \ref{e:error n1} and Lemma  \ref{e: error n2}, we conclude that
\begin{align}
\|\delta R_{0n}\|_0\leq C_0(\varepsilon)\Big(\frac{\sqrt{\delta}\mu_n}{\lambda_n}+\delta\frac{\lambda_{n-1}}{\mu_n}\Big),\qquad
\|\delta R_{0n}\|_{C^1_{t,x}}\leq C_0(\varepsilon)\lambda_n\Big(\frac{\sqrt{\delta}\mu_n}{\lambda_n}+\delta\frac{\lambda_{n-1}}{\mu_n}\Big).\nonumber
\end{align}

\subsection{Estimate on $\delta f_{0n}$}

 \indent

We first deal with $\delta f_{02}$. Recalling that
\begin{align}
   \delta f_{02}=&\mathcal{G}(\hbox{div}K_2)
   +\mathcal{G}\Big(\partial_t\chi_{2}+\sum_{l\in Z^3}v_{01}\Big(\frac{l}{\mu_{2}}\Big)\cdot\nabla\chi_{2l}\Big)+w_{2o}\chi_{2c}
   +\sum_{l\in Z^3}\Big(v_{01}-v_{01}\Big(\frac{l}{\mu_{2}}\Big)\Big)\chi_{2l}\nonumber\\
   &+w_{2c}\chi_2+\sum_{l\in Z^3}w_{2l}\Big(\theta_{01}-\theta_{01}\Big(\frac{l}{\mu_2}\Big)\Big).\nonumber
\end{align}
As before, we split $\delta f_{02}$ into three parts:\\
(1)The oscillation part:
\begin{align}
\mathcal{G}(\hbox{div}K_2).\nonumber
\end{align}
(2)The transport part:
\begin{align}
\mathcal{G}\Big(\partial_t\chi_{2}+\sum_{l\in Z^3}v_{01}\Big(\frac{l}{\mu_{2}}\Big)\cdot\nabla\chi_{2l}\Big).\nonumber
\end{align}
(3)The error part:
\begin{align}
w_{2c}\chi_2+w_{2o}\chi_{2c}+\sum\limits_{l\in Z^3}\Big(v_{01}-v_{01}\Big(\frac{l}{\mu_{2}}\Big)\Big)\chi_{2l}+\sum_{l\in Z^3}w_{2l}\Big(\theta_{01}-\theta_{01}\Big(\frac{l}{\mu_2}\Big)\Big).\nonumber
\end{align}

\begin{Lemma}[The oscillation part]
\begin{align}
\|\mathcal{G}({\rm div}K_2)\|_0\leq C_0(\varepsilon)\frac{\delta\mu_2}{\lambda_2},\quad\|\mathcal{G}({\rm div}K_2)\|_{C^1_{t,x}}\leq C_0(\varepsilon)\delta\mu_2.\nonumber
\end{align}
\end{Lemma}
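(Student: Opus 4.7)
The plan is to mirror, line by line, the proof of Lemma \ref{e:osci}, replacing the index $1$ by $2$ everywhere and invoking the $n$-step amplitude estimates from Lemma \ref{l:n step estimate osc} in place of Lemma \ref{l:estimate osc}, and the $n$-step parameter assumption (\ref{a:n step parameter asssumption}) in place of (\ref{a:assumption on parameter}). The one feature that must be checked is that the divergence structure remains clean, which it does because $k_2\cdot k_2^\perp=0$ exactly as $k_1\cdot k_1^\perp=0$.

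Concretely, I would first write $K_2=K_{21}+K_{22}$ where
\begin{align*}
K_{21}&:=\sum_{j=0}^{7}\sum_{[l]=j}\beta_{2l}b_{2l}k_2\Big(e^{2i\lambda_2 2^{j} k_2^\perp\cdot(x-v_{01}(l/\mu_2)t)}+e^{-2i\lambda_2 2^{j} k_2^\perp\cdot(x-v_{01}(l/\mu_2)t)}\Big),\\
K_{22}&:=\sum_{l\ne l'}w_{2ol}\otimes\chi_{2ol'}\cdot(\text{scalar reorganization}).
\end{align*}
Using $k_2\cdot k_2^\perp=0$, $\operatorname{div}K_{21}$ hits only the scalar amplitude $\nabla(\beta_{2l}b_{2l})\cdot k_2$ and keeps the fast phases $\pm 2^{j}\lambda_2 k_2^\perp$. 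After verifying that each phase block of $\operatorname{div}K_{21}$ belongs to $\Psi$ (by expanding $\cos$ into $e^{\pm i\lambda_2 2^j k_2^\perp\cdot x}$ with $x$-independent coefficients, and observing that the integrand has mean zero in $x$ thanks to the nonzero frequency), I would apply Proposition \ref{p:inverse 2} with $m=[1+1/\varepsilon]+1$. Combining the bound $\|\beta_{2l}b_{2l}\|_m\le C_m\delta(\mu_2^m+\mu_2\ell^{-(m-1)})$ from (\ref{i:inequality 1}) and Lemma \ref{l:n step estimate osc}, together with (\ref{a:n step parameter asssumption}), gives
\begin{align*}
\|\mathcal{G}(\operatorname{div}K_{21})\|_0\le C_m\sum_{j=0}^{7}\delta\Big(\frac{\mu_2}{\lambda_2 2^{j}}+\frac{\mu_2^{m+1}+\mu_2\ell^{-m}}{(\lambda_2 2^{j})^{m}}\Big)\le C_m(\varepsilon)\frac{\delta\mu_2}{\lambda_2}.
\end{align*}

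For the cross term $K_{22}$, I would rewrite it as a sum over $l$ and nearest neighbors $l'$ with $1\le|l'-l|<2$, producing four fast phases $e^{\pm i\lambda_2(2^{[l]}\pm 2^{[l']})k_2^\perp\cdot x}$ with time-dependent shifts $g_{2,l,l'}(t),\overline{g}_{2,l,l'}(t)$ defined exactly as in Lemma \ref{e:osci}. Each resulting mode has nonzero spatial frequency (since $2^{[l]}$ and $2^{[l']}$ are both nonzero integers so their $\pm$-combinations, when paired with $k_2^\perp\ne 0$, give a nonzero frequency vector), hence belongs to $\Psi$, and Proposition \ref{p:inverse 2} then yields $\|\mathcal{G}(\operatorname{div}K_{22})\|_0\le C_m(\varepsilon)\delta\mu_2/\lambda_2$ by the same estimation strategy used for $M_{12}$ in display (\ref{e:estimate stragety for m12}). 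Adding these gives the claimed $C^0$ bound.

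The $C^1_{t,x}$ bound is obtained by differentiating each of $K_{21}$ and $K_{22}$ once in $t$ or $x$: a spatial derivative produces an extra factor $\lambda_2$ from the exponential, while a time derivative either hits the amplitude (controlled by (\ref{e:n step estimate on main perturbation})) or the phase, in which case it produces a factor $\lambda_2 k_2^\perp\cdot v_{01}(l/\mu_2)$ bounded by $C_0\lambda_2$. In every case the worst factor is $\lambda_2$ times the $C^0$ estimate, yielding $\|\mathcal{G}(\operatorname{div}K_2)\|_{C^1_{t,x}}\le C_0(\varepsilon)\delta\mu_2$. The only subtlety worth flagging is bookkeeping: one must use $\|\partial_tb_{2l}\|_m+\|\partial_t\beta_{2l}\|_m$ from Lemma \ref{l:n step estimate osc}, and combine with the same $m=[1+1/\varepsilon]+1$ so that the $\mu_2^m\ell^{1-m}$-type remainders remain under control by (\ref{a:n step parameter asssumption}). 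No new idea beyond Lemma \ref{e:osci} is required; the main obstacle is only notational discipline in writing out the cross-term expansion so that Proposition \ref{p:inverse 2} applies mode by mode.
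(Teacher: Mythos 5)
Your strategy faithfully mirrors the paper's own route: the paper's Lemma~\ref{e:osci} for $\delta f_{01}$ and the remark ``Their proofs are same as in section~6'' are exactly what you are unpacking, with the decomposition $K_2=K_{21}+K_{22}$, the observation $k_2\cdot k_2^\perp=0$ to keep the divergence off the fast phases, Proposition~\ref{p:inverse 2} with $m=\bigl[1+\tfrac{1}{\varepsilon}\bigr]+1$, the amplitude bounds from Lemma~\ref{l:n step estimate osc}, and one extra factor of $\lambda_2$ for the $C^1_{t,x}$ estimate. So the proposal is correct in structure and conclusion.

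Two of the intermediate justifications, however, are not the right reasons, even though the facts they assert are true. First, you claim the phase blocks of ${\rm div}K_{21}$ ``have mean zero in $x$ thanks to the nonzero frequency.'' That inference is false in general: a compactly supported amplitude times $e^{i\lambda k\cdot x}$ has integral equal to a nonzero Fourier-transform value. What actually gives membership in $\Psi$ is the divergence structure: since $k_2\cdot k_2^\perp=0$, each mode in ${\rm div}K_{21}$ is literally $k_2\cdot\nabla\bigl(\beta_{2l}b_{2l}\,e^{\pm 2i\lambda_2 2^{[l]}k_2^\perp\cdot(\cdot)}\bigr)$, a full divergence of a compactly supported function, hence mean zero; the same remark handles the modes of ${\rm div}K_{22}$. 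Second, you argue the cross frequencies are nonzero because ``$2^{[l]}$ and $2^{[l']}$ are both nonzero integers so their $\pm$-combinations\dots give a nonzero frequency.'' That reasoning fails for the difference: $2^{[l]}-2^{[l']}=0$ if $[l]=[l']$. The correct reason is that $[l]\neq[l']$ whenever ${\rm supp}\,\alpha_l\cap{\rm supp}\,\alpha_{l'}\neq\emptyset$: if $l$ and $l'$ had the same parity pattern, every coordinate difference $l_j-l'_j$ would be even, forcing $|l-l'|\ge 2$ and disjoint supports. This is precisely the purpose of the $[l]$ encoding, so it is worth stating it accurately. With those two repairs the proposal is a faithful and complete rendering of the paper's argument.
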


\begin{Lemma}[The transport part]
\begin{align}
\Big\|\mathcal{G}\Big(\partial_t\chi_{2}+\sum_{l\in Z^3}v_{01}\Big(\frac{l}{\mu_{2}}\Big)\cdot\nabla\chi_{2l}\Big)\Big\|_0\leq C_0(\varepsilon)\frac{\sqrt{\delta}\mu_2}{\lambda_2},\nonumber\\
\Big\|\mathcal{G}\Big(\partial_t\chi_{2}+\sum_{l\in Z^3}v_{01}\Big(\frac{l}{\mu_{2}}\Big)\cdot\nabla\chi_{2l}\Big)\Big\|_{C^1_{t,x}}\leq C_0(\varepsilon)\sqrt{\delta}\mu_2.\nonumber
\end{align}
\end{Lemma}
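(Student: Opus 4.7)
The plan is to mirror the proof of Lemma \ref{e:trans} at the second stage, with $h_{\pm 2l}$ in place of $h_{\pm 1l}$ and with $\lambda_2,\mu_2,v_{01}$ in place of $\lambda_1,\mu_1,v_0$. Using the representation
\[
\chi_{2l}=h_{2l}e^{i\lambda_2 2^{[l]}k_2^\perp\cdot(x-v_{01}(l/\mu_2)t)}+h_{-2l}e^{-i\lambda_2 2^{[l]}k_2^\perp\cdot(x-v_{01}(l/\mu_2)t)},
\]
together with the frozen-coefficient transport identity
\[
\Bigl(\partial_t+v_{01}\bigl(\tfrac{l}{\mu_2}\bigr)\cdot\nabla\Bigr)e^{\pm i\lambda_2 2^{[l]}k_2^\perp\cdot(x-v_{01}(l/\mu_2)t)}=0,
\]
the derivative collapses onto the slow amplitude only, giving
\[
\partial_t\chi_2+\sum_{l\in Z^3}v_{01}\bigl(\tfrac{l}{\mu_2}\bigr)\cdot\nabla\chi_{2l}=\sum_{j=0}^7\sum_{[l]=j}\Bigl(\partial_t+v_{01}\bigl(\tfrac{l}{\mu_2}\bigr)\cdot\nabla\Bigr)h_{\pm 2l}\,e^{\pm i\lambda_2 2^j k_2^\perp\cdot(x-v_{01}(l/\mu_2)t)}.
\]
Crucially, since $v_{01}(l/\mu_2)$ is a constant vector bounded by $\|v_{01}\|_0\le C_0$, no factor of $\lambda_1$ from $v_{01}$ enters, so the amplitudes inherit the clean $C^m$ estimates on $h_{\pm 2l}$ from Lemma \ref{l:n step estimate osc}.

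Next, I apply Proposition \ref{p:inverse 2} to each of the finitely many modes, taking $m=\bigl[1+\tfrac{1}{\varepsilon}\bigr]+1$. Using the bounds $\|(\partial_t+v_{01}(l/\mu_2)\cdot\nabla)h_{\pm 2l}\|_i\le C_i\sqrt{\delta}(\mu_2^{i+1}+\mu_2\ell^{-i})$ that follow directly from Lemma \ref{l:n step estimate osc} and parameter assumption (\ref{a:assumption on parameter m sequence}), the operator bound yields
\[
\Bigl\|\mathcal{G}\Bigl(\partial_t\chi_2+\sum_{l}v_{01}(l/\mu_2)\cdot\nabla\chi_{2l}\Bigr)\Bigr\|_0\le C_m\sum_{j=0}^7\sqrt{\delta}\Bigl(\frac{\mu_2}{\lambda_2 2^j}+\frac{\mu_2^{m+1}+\mu_2\ell^{-m}}{(\lambda_2 2^j)^m}\Bigr)\le C_0(\varepsilon)\frac{\sqrt{\delta}\mu_2}{\lambda_2},
\]
since $\lambda_2\ge\max\{\mu_2^{1+\varepsilon},\ell^{-(1+\varepsilon)}\}$ and the choice of $m$ ensures the tail term is absorbed.

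For the $C^1_{t,x}$ bound I differentiate once more. A $\partial_t$ or $\nabla$ either hits the exponential (producing a factor of $\lambda_2 k_2^\perp\cdot v_{01}(l/\mu_2)$ or $\lambda_2 k_2^\perp$ and leaving the same amplitude structure) or hits the amplitude (producing a second-order transport-type expression bounded via the $\partial_{tt}h_{\pm 2l}$ and $\nabla\partial_t h_{\pm 2l}$ estimates in Lemma \ref{l:n step estimate osc}). Each case multiplies the $C_0$ estimate by $\lambda_2$, giving the claimed bound $C_0(\varepsilon)\sqrt{\delta}\mu_2$.

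The main obstacle, as in Lemma \ref{e:trans}, is purely organizational: the argument depends on the cancellation from freezing $v_{01}$ at $l/\mu_2$, so one must check carefully that none of the arising commutator terms reintroduce a lost factor of $\lambda_1$ (coming from $\|\nabla v_{01}\|_0\lesssim\sqrt{\delta}\lambda_1$), which would break the target. Since the transport identity is exact at the frozen coefficient and the $v_{01}$-dependence appears only through $v_{01}(l/\mu_2)$ (a constant) and through $h_{\pm 2l}$ (whose derivatives are controlled by $\mu_2$, not $\lambda_1$, under the parameter hierarchy), this obstacle resolves exactly as in the analogous step 1 lemma. All other manipulations are identical to those of Lemma \ref{e:osci}--\ref{e:trans} and I would omit the details just as the authors do in subsequent analogues.
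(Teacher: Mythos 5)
Your proof is correct and follows exactly the route the paper takes: the paper itself omits the details and simply states that the proof is the same as that of the analogous stage-one lemma (Lemma~\ref{e:trans}), which is precisely what you reproduce with the substitutions $h_{\pm 1l}\to h_{\pm 2l}$, $v_0\to v_{01}$, $\mu_1\to\mu_2$, $\lambda_1\to\lambda_2$. Your observation that no stray factor of $\lambda_1$ from $\nabla v_{01}$ can enter---because $v_{01}$ appears only through the constant $v_{01}(l/\mu_2)$ and the amplitudes $h_{\pm 2l}$ are built from $\beta_{2l}$, which is independent of $v_{01}$ and whose derivatives are controlled by $\mu_2$ and $\ell^{-1}$ via Lemma~\ref{l:n step estimate osc}---is exactly the structural fact that makes the by-reference proof legitimate.
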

Their proofs are same as in section 6.
\begin{Lemma}[The error part]
\begin{align}
\Big\|w_{2c}\chi_2+w_{2o}\chi_{2c}+\sum_{l\in Z^3}\Big(\theta_{01}-\theta_{01}\Big(\frac{l}{\mu_{2}}\Big)\Big)\chi_{2l}\Big\|_0\leq C_0\delta\Big(\frac{\mu_2}{\lambda_2}+\frac{\lambda_1}{\mu_2}\Big),\nonumber\\
\Big\|w_{2c}\chi_2+w_{2o}\chi_{2c}+\sum_{l\in Z^3}\Big(\theta_{01}-\theta_{01}\Big(\frac{l}{\mu_{2}}\Big)\Big)\chi_{2l}\Big\|_{C^1_{t,x}}\leq C_0\lambda_2\delta\Big(\frac{\mu_2}{\lambda_2}+\frac{\lambda_1}{\mu_2}\Big).\nonumber
\end{align}
\begin{proof}
First, by Lemma \ref{e:n step estimate on perturbation and correction}
\begin{align}
\|w_{2c}\chi_2+w_{2o}\chi_{2c}\|_0\leq C_0\delta\frac{\mu_2}{\lambda_2}.\nonumber
\end{align}
As the argument in Lemma \ref{e:error n1} , we have
\begin{align}
\Big\|\sum_{l\in Z^3}\Big(v_{01}-v_{01}\Big(\frac{l}{\mu_{2}}\Big)\Big)\chi_{2l}\Big\|_0\leq C_0\delta\frac{\lambda_1}{\mu_2},\qquad
\Big\|\sum_{l\in Z^3}w_{2l}\Big(\theta_{01}-\theta_{01}\Big(\frac{l}{\mu_2}\Big)\Big)\Big\|_0\leq C_0\delta\frac{\lambda_1}{\mu_2}.\nonumber
\end{align}
The $C^1_{t,x}$ estimate is similar to that of Lemma \ref{e:error n1}.
\end{proof}
\end{Lemma}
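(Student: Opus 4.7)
The plan is to estimate each of the four groups of summands in this error part separately and then add. The ``product'' terms $w_{2o}\chi_{2c}$ and $w_{2c}\chi_2$ will be handled by a straightforward H\"older product estimate: since $\chi_2=\chi_{2o}+\chi_{2c}$, every summand contains at least one ``correction'' factor, and Lemma~\ref{e:n step estimate on perturbation and correction} gives $\|w_{2o}\|_0,\|\chi_{2o}\|_0\leq C_0\sqrt{\delta}$ while $\|w_{2c}\|_0,\|\chi_{2c}\|_0\leq C_0\sqrt{\delta}\mu_2/\lambda_2$, so each such product is bounded in $C^0$ by $C_0\delta\mu_2/\lambda_2$.

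The two ``commutator-type'' sums $\sum_l(v_{01}-v_{01}(l/\mu_2))\chi_{2l}$ and $\sum_l w_{2l}(\theta_{01}-\theta_{01}(l/\mu_2))$ will be treated by mimicking the argument in Lemma~\ref{e:error n1}. On the support of the cutoff $\alpha_l(\mu_2 t,\mu_2 x)$ one has $|(t,x)-(l/\mu_2)|\leq c_2/\mu_2$, so the mean value theorem yields $|v_{01}-v_{01}(l/\mu_2)|\leq C_0\|v_{01}\|_{C^1_{t,x}}/\mu_2$ and likewise for $\theta_{01}$. Combining this with the amplitude bounds $\|h_{\pm 2l}\|_0,\|k_{\pm 2l}\|_0\leq C_0\sqrt{\delta}$ from Lemma~\ref{l:n step estimate osc}, the local finiteness of the partition of unity (at most finitely many overlapping $l$ at each point), and crucially the inductive gradient estimate~(\ref{e:induction difference estimate}) which gives $\|v_{01}\|_{C^1_{t,x}}+\|\theta_{01}\|_{C^1_{t,x}}\leq C_0\sqrt{\delta}\lambda_1$, each commutator sum is bounded in $C^0$ by $C_0\delta\lambda_1/\mu_2$. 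Summing the four groups then gives the first estimate.

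For the $C^1_{t,x}$ estimate, I would differentiate in $(t,x)$ and observe that each derivative either hits a smooth amplitude (producing a factor of at most $\mu_2$, or $\lambda_1$ when $v_{01}$ or $\theta_{01}$ is differentiated) or an exponential $e^{\pm i\lambda_2 2^{[l]}k_2^{\perp}\cdot(x-v_{01}(l/\mu_2)t)}$ (producing a factor $\lambda_2|k_2^{\perp}|$ or $\lambda_2|k_2^{\perp}\cdot v_{01}(l/\mu_2)|$). Since $\|v_{01}\|_0\leq C_0$ is uniformly bounded and $\mu_2,\lambda_1\leq\lambda_2$ by the parameter assumption~(\ref{a:n step parameter asssumption}), the exponential contribution dominates, so each $C^1_{t,x}$ bound is exactly $\lambda_2$ times the corresponding $C^0$ bound.

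The main conceptual point---and the reason this lemma matters---is that the commutator term forces the parameter hierarchy $\mu_2\gg\lambda_1$: the first step has already inflated $\|v_{01}\|_{C^1_{t,x}}$ and $\|\theta_{01}\|_{C^1_{t,x}}$ from the original $\Lambda$ up to order $\sqrt{\delta}\lambda_1$, so the naive $\Lambda/\mu_2$ factor from a one-step analysis is replaced by $\sqrt{\delta}\lambda_1/\mu_2$, whose smallness must be arranged when choosing parameters in section~10. Everything else is routine bookkeeping of frequencies and derivatives.
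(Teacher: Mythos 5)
Your proposal is correct and takes essentially the same route as the paper: the product terms are bounded directly from the $C^0$ norms of $w_{2o},w_{2c},\chi_{2o},\chi_{2c}$ in Lemma~\ref{e:n step estimate on perturbation and correction}, the two commutator sums are handled exactly as in Lemma~\ref{e:error n1} via the localized mean-value estimate on $\mathrm{supp}\,\alpha_l$ together with the inductive bound $\|v_{01}\|_{C^1_{t,x}}+\|\theta_{01}\|_{C^1_{t,x}}\leq C_0\sqrt{\delta}\lambda_1$, and the $C^1_{t,x}$ estimate follows because the worst derivative loss is the factor $\lambda_2$ from the oscillatory exponential. You also correctly read the lemma's displayed expression as containing four groups of summands (matching the error part listed just before the lemma), which is what the paper's proof actually bounds.
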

From the above three lemmas, we conclude
\begin{align}
\|\delta f_{02}\|_0\leq  C_0(\varepsilon)\Big(\frac{\sqrt{\delta}\mu_2}{\lambda_2}+\delta\frac{\lambda_1}{\mu_2}\Big),\qquad
\|\delta f_{02}\|_{C^1_{t,x}}\leq  C_0(\varepsilon)\lambda_2\Big(\frac{\sqrt{\delta}\mu_2}{\lambda_2}+\delta\frac{\lambda_1}{\mu_2}\Big).\nonumber
\end{align}

Now we consider the estimates of $\delta f_{03}$. From definition (\ref{d:definition f0n}) on $\delta f_{03}$, applying the same argument as in Lemma \ref{e:est 1}, we have
\begin{align}
\|\delta f_{0n}\|_0\leq  C_0 \delta\frac{\lambda_2}{\mu_3},\qquad
\|\delta f_{0n}\|_{C^1_{t,x}}\leq  C_0 \lambda_3\delta\frac{\lambda_2}{\mu_3}.\nonumber
\end{align}
By induction, we know that, for any $1\leq n \leq 3$, $(v_{0n}, p_{0n}, \theta_{0n}, R_{0n}, f_{0n})\in C_c^{\infty}(Q_{r+\delta})$  solves system (\ref{d:boussinesq reynold}) and satisfies
\begin{align}
 R_{0n}=-\sum_{i=n+1}^3 a^2_ik_i\otimes k_i+\sum_{i=i}^n\delta R_{0i},\qquad
    g_{0n}:=\sum_{i=1}^n\delta g_{0i}\nonumber
\end{align}
with the estimates
\begin{align}
 \|v_{0n}-v_{0(n-1)}\|_0\leq \frac{M\sqrt{\delta}}{6}+C_0\frac{\sqrt{\delta}\mu_n}{\lambda_n} ,\quad\|v_{0n}-&v_{0(n-1)}\|_{C^1_{t,x}}\leq C_0\lambda_n\sqrt{\delta},\quad
\|p_{0n}-p_{0(n-1)}\|_0=0,\nonumber\\
 \|\theta_{0n}-\theta_{0(n-1)}\|_0\leq \left\{
 \begin{array}{cc}
 \frac{M\sqrt{\delta}}{4}+C_0\frac{\sqrt{\delta}\mu_2}{\lambda_2} ,~~~n=2\nonumber\\
 0,~~~~~~~~~~ n= 3.
 \end{array}
 \right.&\quad
 \|\theta_{0n}-\theta_{0(n-1)}\|_{C^1_{t,x}}\leq \left\{
 \begin{array}{cc}
 C_0\lambda_2\sqrt{\delta} ,~~~n=2\nonumber\\
 0,~~~~~~~~~~ n= 3.
 \end{array}
 \right.\\
\|\delta R_{0n}\|_0\leq C_0(\varepsilon)\Big(\frac{\sqrt{\delta}\mu_n}{\lambda_n}+\delta\frac{\lambda_{n-1}}{\mu_n}\Big),&\qquad
\|\delta f_{0n}\|_0\leq C_0(\varepsilon)\Big(\frac{\sqrt{\delta}\mu_n}{\lambda_n}+\delta\frac{\lambda_{n-1}}{\mu_n}\Big),\nonumber\\
\|\delta R_{0n}\|_{C^1_{t,x}}\leq C_0(\varepsilon)\lambda_n\Big(\frac{\sqrt{\delta}\mu_n}{\lambda_n}+\delta\frac{\lambda_{n-1}}{\mu_n}\Big),&\qquad
\|\delta f_{0n}\|_{C^1_{t,x}}\leq C_0(\varepsilon)\lambda_n\Big(\frac{\sqrt{\delta}\mu_n}{\lambda_n}+\delta\frac{\lambda_{n-1}}{\mu_n}\Big).\nonumber
\end{align}

Finally, we obtain $(v_{03},~p_{03},~\theta_{03},~R_{03},~f_{03})\in C_c^{\infty}(Q_{r+\delta})$ which solves system (\ref{d:boussinesq reynold}) and satisfies
\begin{align}
\|R_{03}\|_0\leq C_0(\varepsilon)\sum\limits_{n=1}^{3}\Big(\frac{\sqrt{\delta}\mu_n}{\lambda_n}+\delta\frac{\lambda_{n-1}}{\mu_n}\Big),&\quad
\|f_{03}\|_0\leq C_0(\varepsilon)\sum\limits_{n=1}^{3}\Big(\frac{\sqrt{\delta}\mu_n}{\lambda_n}+\delta\frac{\lambda_{n-1}}{\mu_n}\Big),\nonumber\\
\|R_{03}\|_{C^1_{t,x}}\leq C_0(\varepsilon)\sum\limits_{n=1}^{3}\lambda_n\Big(\frac{\sqrt{\delta}\mu_n}{\lambda_n}+\delta\frac{\lambda_{n-1}}{\mu_n}\Big),&\qquad
\|f_{03}\|_{C^1_{t,x}}\leq C_0(\varepsilon)\sum\limits_{n=1}^{3}\lambda_n\Big(\frac{\sqrt{\delta}\mu_n}{\lambda_n}+\delta\frac{\lambda_{n-1}}{\mu_n}\Big),\nonumber\\
\|v_{03}-v_0\|_0\leq\frac{M\sqrt{\delta}}{2}+
C_0\sum\limits_{n=1}^{3}\frac{\sqrt{\delta}\mu_n}{\lambda_n},&\qquad\|v_{03}-v_0\|_{C^1_{t,x}}\leq C_0\sum\limits_{n=1}^{3}\lambda_n\sqrt{\delta},\nonumber\\
\|p_{03}-p_0\|_0\leq M\delta,&\qquad\|p_{03}-p_0\|_{C^1_{t,x}}\leq C_0, \nonumber\\
\|\theta_{03}-\theta_0\|_0\leq\frac{M\sqrt{\delta}}{2}+
C_0\sum\limits_{n=1}^{2}\frac{\sqrt{\delta}\mu_n}{\lambda_n},&\qquad
\|\theta_{03}-\theta_0\|_{C^1_{t,x}}\leq C_0\sum\limits_{n=1}^{2}\lambda_n\sqrt{\delta}.\nonumber
\end{align}

\setcounter{equation}{0}

\section{Proof of Proposition 2.1}
In this section, we prove Proposition \ref{p: iterative 1} by choosing the appropriate parameters
$\ell, \mu_n, \lambda_n$ for $1\leq n\leq 3$.
\begin{proof}
From the results of Section 9, we have $(v_{03},~p_{03},~\theta_{03},~R_{03},~f_{03})\in C_c^{\infty}(Q_{r+\delta})$ which solves system (\ref{d:boussinesq reynold}) and satisfies
\begin{align}\label{e:final summation estimate}
\|R_{03}\|_0\leq C_0(\varepsilon)\sum\limits_{n=1}^{3}\Big(\frac{\sqrt{\delta}\mu_n}{\lambda_n}+\delta\frac{\lambda_{n-1}}{\mu_n}\Big),&\quad
\|f_{03}\|_0\leq C_0(\varepsilon)\sum\limits_{n=1}^{3}\Big(\frac{\sqrt{\delta}\mu_n}{\lambda_n}+\delta\frac{\lambda_{n-1}}{\mu_n}\Big),\nonumber\\
\|R_{03}\|_{C^1_{t,x}}\leq C_0(\varepsilon)\sum\limits_{n=1}^{3}\lambda_n\Big(\frac{\sqrt{\delta}\mu_n}{\lambda_n}+\delta\frac{\lambda_{n-1}}{\mu_n}\Big),&\qquad
\|f_{03}\|_{C^1_{t,x}}\leq C_0(\varepsilon)\sum\limits_{n=1}^{3}\lambda_n\Big(\frac{\sqrt{\delta}\mu_n}{\lambda_n}+\delta\frac{\lambda_{n-1}}{\mu_n}\Big),\nonumber
\end{align}
\begin{align}
\|v_{03}-v_0\|_0\leq\frac{M\sqrt{\delta}}{2}+
C_0\sum\limits_{n=1}^{3}\frac{\sqrt{\delta}\mu_n}{\lambda_n},&\qquad\|v_{03}-v_0\|_{C^1_{t,x}}\leq C_0\sum\limits_{n=1}^{3}\lambda_n\sqrt{\delta},\nonumber\\
\|p_{03}-p_0\|_0\leq M\delta,&\qquad\|p_{03}-p_0\|_{C^1_{t,x}}\leq C_0, \nonumber\\
\|\theta_{03}-\theta_0\|_0\leq\frac{M\sqrt{\delta}}{2}+
C_0\sum\limits_{n=1}^{2}\frac{\sqrt{\delta}\mu_n}{\lambda_n},&\qquad
\|\theta_{03}-\theta_0\|_{C^1_{t,x}}\leq C_0\sum\limits_{n=1}^{2}\lambda_n\sqrt{\delta}.
\end{align}
where $\lambda_0=\Lambda\delta^{-\frac{1}{2}}+\frac{\mu_1\Lambda\ell}{\delta}$.
We divide the remainder proof into four steps:\\
{\bf Step 1}.
We now specify the choice of the parameters. First choose:
\begin{align}\label{d:choose of parameter l}
\ell=\frac{1}{L_v}\frac{\bar{\delta}}{\Lambda},
\end{align}
with $L_v$ being a sufficiently large constant, which depends only on $\|v\|_0$ and will be chosen in Step 3 below.\\
Next, we impose
\begin{align}
 \mu_1=L_v\frac{\sqrt{\delta}}{\bar{\delta}}\Lambda,\quad \lambda_1=L_v\frac{\sqrt{\delta}}{\bar{\delta}}\mu_1^{1+\varepsilon},\quad \mu_i=L_v\frac{\delta\lambda_{i-1}}{\bar{\delta}},\quad
\lambda_i=L_v\frac{\sqrt{\delta}}{\bar{\delta}}\mu_i^{1+\varepsilon},\quad i=2,3.
\end{align}
{\bf Step 2. Compatibility condition.} We check that all the conditions in (\ref{a:assumption on parameter}), (\ref{a:assumption on parameter m sequence}) are satisfied by our choice of the parameters.

We first check the triple $(\ell, \mu_1, \lambda_1)$. By (\ref{d:choose of parameter l}) $$\ell^{-1}=L_v\frac{\Lambda}{\bar{\delta}}\geq\frac{\Lambda}{\eta\delta}$$
if we take $L_v\geq\frac{1}{\eta}$.\\
Since $\bar{\delta}\leq \delta^{\frac{3}{2}}$, $$\mu_1\geq \frac{\Lambda}{\delta}$$
and
 $$\lambda_1\geq L_v^{1+\varepsilon}\Lambda^{1+\varepsilon}\Big(\frac{\sqrt{\delta}}{\bar{\delta}}\Big)^{2+\varepsilon}\geq \Big(L_v\frac{\Lambda}{\bar{\delta}}\Big)^{1+\varepsilon}\geq \ell^{-(1+\varepsilon)}.$$
It's obvious that
$$\lambda_1\geq{\mu_1^{1+\varepsilon}}.$$
Thus, (\ref{a:assumption on parameter}) is satisfied.

Next, for $i=2,3$, it's obvious that $$\frac{\mu_i}{\mu_{i-1}}=\frac{\lambda_{i-1}}{\lambda_{i-2}}>1.$$
A straightforward computation yield
 $$\lambda_i\geq\frac{\sqrt{\delta}}{\bar{\delta}}\Big(\frac{\delta}{\bar{\delta}}\Big)^{1+\varepsilon}\lambda_{i-1}^{1+\varepsilon}\geq \lambda_{i-1}\geq \ell^{-(1+\varepsilon)}.$$
It's obvious that
$$\lambda_i\geq\mu_i^{1+\varepsilon}.$$
Thus, the relationship (\ref{a:assumption on parameter m sequence}) is satisfied.\\
{\bf Step 3. $C^0$ estimates} In the following, $\varepsilon$ ia a parameter that is small, but fixed, and our constants will be allowed to depend on $\varepsilon$. Thus, (\ref{e:final summation estimate}) implies
\begin{align}
\|R_{03}\|_0\leq C_0(\varepsilon) \bar{\delta}L_v^{-1},\quad&
\|f_{03}\|_0\leq C_0(\varepsilon) \bar{\delta}L_v^{-1},\nonumber\\
\|v_{03}-v_0\|_0\leq \frac{M\sqrt{\delta}}{2}+C_0\bar{\delta}L_v^{-1},\quad
\|\theta_{03}-\theta_0\|_0&\leq \frac{M\sqrt{\delta}}{2}+C_0\bar{\delta}L_v^{-1},\quad
\|p_{03}-p_0\|_0\leq M\delta.\nonumber
\end{align}
Choosing $L_v$ sufficiently large which depending on $\|v\|_0$ linearly,  we can achieve the desired inequalities
(\ref{e:iterative stress estimate})-(\ref{e:iterative pressure difference estimate}).\\
{\bf Step 4. $C^1$ estimates.}
By the specified choices of parameters we have
\begin{align}
\|R_{03}\|_{C^1}\leq \lambda_3\bar{\delta},\quad
\|f_{03}\|_{C^1}\leq \lambda_3\bar{\delta},\quad
\|v_{03}-v_0\|_{C^1_{t,x}}\leq C_0\lambda_3\sqrt{\delta},\quad
\|\theta_{03}-\theta_0\|_{C^1_{t,x}}\leq C_0\lambda_3\sqrt{\delta}.\nonumber
\end{align}
Notice that for $i=2,3$, there holds
$$\lambda_i=L_v^{2+\varepsilon}\frac{\sqrt{\delta}}{\bar{\delta}}\Big(\frac{\delta}{\bar{\delta}}\Big)^{1+\varepsilon}\lambda_{i-1}^{1+\varepsilon}
=\frac{1}{\sqrt{\delta}}\Big(\frac{L_v\delta}{\bar{\delta}}\Big)^{2+\varepsilon}\lambda_{i-1}^{1+\varepsilon}.$$
Thus, we conclude
\begin{align}
&\max\{1,\|R_{03}\|_{C^1}, \|f_{03}\|_{C^1}, \|v_{03}\|_{C^1}, \|\theta_{03}\|_{C^1}\}\leq \Lambda+C_0(\varepsilon)\sqrt{\delta}\lambda_3\nonumber\\
\leq& C_0(\varepsilon)L_v^{(1+\varepsilon)^{2}(2+\varepsilon)+(2+\varepsilon)^2}(\sqrt{\delta})^{\varepsilon^2+3\varepsilon+3}
\Big(\frac{\sqrt{\delta}}{\bar{\delta}}\Big)^{(1+\varepsilon)^{2}(2+\varepsilon)+(2+\varepsilon)^2}
\Lambda^{(1+\varepsilon)^{3}}.\nonumber
\end{align}
Setting $A=C_0(\varepsilon)L_v^{(1+\varepsilon)^{2}(2+\varepsilon)+(2+\varepsilon)^2}$, we conclude estimate (\ref{e:first derivative estimate for stress term}).\\
More precisely, we have
\begin{align}
\|\theta_{03}\|_{C^1_{t,x}}\leq \Lambda+C_0(\varepsilon)\sqrt{\delta}\lambda_2&\leq
C_0(\varepsilon)L_v^{4+4\varepsilon+\varepsilon^2}
(\sqrt{\delta})^{2+\varepsilon}
\Big(\frac{\sqrt{\delta}}{\bar{\delta}}\Big)^{4+4\varepsilon+\varepsilon^2}
\Lambda^{(1+\varepsilon)^2}\nonumber\\
&\leq A\delta^{\frac{2+\varepsilon}{2}}
\Big(\frac{\sqrt{\delta}}{\bar{\delta}}\Big)^{4+4\varepsilon+\varepsilon^2}
\Lambda^{(1+\varepsilon)^2
},\nonumber
\end{align}
this is the second estimate in (\ref{e:terperture derivative estimates}).

Finally, we set
$$\tilde{V}:=v_{03},~\tilde{p}:=p_{03},~\tilde{\theta}:=\theta_{03},~\tilde{R}:=R_{03},~\tilde{f}:=f_{03},$$
then $\tilde{V},~\tilde{p},~\tilde{\theta},~\tilde{R},~\tilde{f}$ are what we need in our Proposition (\ref{p: iterative 1}).
\end{proof}

\setcounter{equation}{0}

\setcounter{equation}{0}
\section{Proof of theorem 1.1}

\indent

We first construct a non-trival solution $v_0, p_0, \theta_0, R_0, f_0$ with compact support both in space and time for system (\ref{d:boussinesq reynold}).

\subsection{Construction of compactly supported solution ($v_0, p_0, \theta_0, R_0, f_0$) for system (\ref{d:boussinesq reynold})}

\indent

We first set $k_1:=(1,0)^T$ and let
$$0\leq \varphi(t,x)\in C_c^{\infty}(Q_{r};R),\quad \varphi(t,x)=10M \quad{\rm in}\quad Q_{\frac{r}{2}},$$
 where $r,M$ is the constant appeared in Proposition (\ref{p: iterative 1}). Then set
\begin{align}
\bar{p}(t,x):=-2\varphi^2(t,x),\quad a_{1l}(t,x):=\varphi(t,x)\alpha_l(\mu_1t,\mu_1x),\quad \bar{R}(t,x):=\left(\begin{array}{cc}
-2\varphi^2(t,x) & 0 \\
0     &    -2\varphi^2(t,x)
\end{array}\right).\nonumber
\end{align}
Here $\alpha_l$ is the partition of unity in section 4. Obviously, $\nabla\bar{p}={\rm div}\bar{R}.$\\
We first set
\begin{align}\label{d:definition on first perturbation appromation}
v_{01ol}(t,x):=&-a_{1l}(t,x) k_1\Big(ie^{i\lambda_12^{|l|}k_1^{\perp}\cdot x}-ie^{-i\lambda_12^{|l|}k_1^{\perp}\cdot x}\Big),\nonumber\\
v_{01cl}(t,x):=&\nabla^{\perp}(\nabla a_{1l}(t,x)\cdot k_1^{\perp})\Big(\frac{ie^{i\lambda_12^{|l|}k_1^{\perp}\cdot x}
-ie^{-i\lambda_12^{|l|}k_1^{\perp}\cdot x}}{\lambda_1^22^{2|l|}}\Big)
-\nabla^{\perp}a_{1l}(t,x)\Big(\frac{e^{i\lambda_12^{|l|}k_1^{\perp}\cdot x}+e^{-i\lambda_12^{|l|}k_1^{\perp}\cdot x}}{\lambda_12^{|l|}}\Big)\nonumber\\
&-\nabla a_{1l}(t,x)\cdot k_1^{\perp}\Big(\frac{k_1e^{i\lambda_12^{|l|}k_1^{\perp}\cdot x}+k_1e^{-i\lambda_12^{|l|}k_1^{\perp}\cdot x}}{\lambda_12^{|l|}}\Big),
\end{align}
where $|l|$ is the length of $l$, $\mu_1\ll\lambda_1$ are two positive numbers which will be chosen quite large, depending on appropriate norms of $\varphi$.\\
Then set
\begin{align}
v_{01l}:=v_{01ol}+v_{01cl},\quad v_{01o}:=\sum\limits_{l\in Z^3}v_{01ol},\quad v_{01c}:=\sum\limits_{l\in Z^3}v_{01cl},\quad v_{01}:=\sum\limits_{l\in Z^3}v_{01l}.\nonumber
\end{align}
Thus, a straightforward computation gives
\begin{align}
v_{01}(t,x):=&\sum\limits_{l\in Z^3}\nabla^{\perp}\hbox{div}\Big(a_{1l}(t,x)\Big(\frac{ik_1^{\perp}e^{i\lambda_12^{|l|}k_1^{\perp}\cdot x}-ik_1^{\perp}e^{-i\lambda_12^{|l|}k_1^{\perp}\cdot x}}{\lambda_1^22^{2|l|}}\Big)\Big).\nonumber
\end{align}
Let $b(t,x)\in C_c^{\infty}(Q_r;R)$ and set
\begin{align}
\theta_{01o}(t,x):=&-b(t,x)(e^{i\lambda_1k_1^{\perp}\cdot x}+e^{-i\lambda_1k_1^{\perp}\cdot x}),\nonumber\\
\theta_{01c}(t,x):=&\triangle b(t,x)\frac{e^{i\lambda_1k_1^{\perp}\cdot x}+e^{-i\lambda_1k_1^{\perp}\cdot x}}{\lambda_1^2}
+2\nabla b(t,x)\cdot k_1^{\perp}\frac{ie^{i\lambda_1k_1^{\perp}\cdot x}-ie^{-i\lambda_1k_1^{\perp}\cdot x}}{\lambda_1}.\nonumber
\end{align}
Then denote $\theta_{01}$ by
\begin{align}
\theta_{01}:=&\theta_{01c}+\theta_{01o}.\nonumber
\end{align}
Thus
\begin{align}
\theta_{01}(t,x)=\triangle\Big(b(t,x)\Big(\frac{e^{i\lambda_1k_1^{\perp}\cdot x}+e^{-i\lambda_1k_1^{\perp}\cdot x}}{\lambda_1^2}\Big)\Big).\nonumber
\end{align}
Finally, we set
\begin{align}
p_{01}:=&\bar{p},\quad
R_{01}:=\bar{R}+2\sum\limits_{l\in Z^3}a^2_{1l}k_1\otimes k_1+\delta R_{01},\nonumber\\
f_{01}:=&v_{01o}\theta_{01c}+v_{01c}\theta_{01o}+v_{01c}\theta_{01c}
+\mathcal{G}\Big(\partial_t\theta_{01}+{\rm div}(v_{01o}\theta_{01o})\Big),\nonumber
\end{align}
where
\begin{align}
\delta R_{01}=&v_{01o}\otimes v_{01c}+v_{01c}\otimes v_{01o}+v_{01c}\otimes v_{01c}+\mathcal{R}\Big(\partial_tv_{01}\nonumber\\
&+{\rm div}\Big(-\sum\limits_{l\in Z^3}a^2_{1l}k_1\otimes k_1\Big(e^{2i\lambda_12^{|l|}k_1^{\perp}\cdot x}+e^{-i\lambda_12^{|l|}k_1^{\perp}\cdot x}\Big)+\sum\limits_{l,l'\in Z^3,l\neq l'}v_{01ol}\otimes v_{01ol'}\Big)
-\theta_{01}e_2\Big).\nonumber
\end{align}
Obviously, ${\rm div}v_{01}=0$ and $\partial_tv_{01},~~~ \theta_{01}e_2\in\Xi,~~~ \partial_t\theta_{01}+{\rm div}(v_{01o}\theta_{01o})\in\Psi$, thus $R_{01}, f_{01}$ is well-defined. By Proposition \ref{p:proposition R} and Proposition \ref{p:inverse 2}, we know that $(v_{01}, p_{01}, \theta_{01}, R_{01}, f_{01})\in C^\infty_c(Q_r)$ solves Boussinesq-stress system (\ref{d:boussinesq reynold}).  In fact, by Proposition \ref{p:proposition R}, we have
\begin{align}
{\rm div}R_{01}=\partial_t v_{01}+{\rm div}(v_{01}\otimes v_{01})+\nabla p_{01}-\theta_{01}e_2,\nonumber
\end{align}
where we use the identity
\begin{align}
&{\rm div}\Big\{-\sum\limits_{l\in Z^3}a^2_{1l}k_1\otimes k_1\Big(e^{2i\lambda_12^{|l|}k_1^{\perp}\cdot x}+e^{-i\lambda_12^{|l|}k_1^{\perp}\cdot x}\Big)+2\sum\limits_{l\in Z^3}a^2_{1l}k_1\otimes k_1+\sum\limits_{l,l'\in Z^3,l\neq l'}v_{01ol}\otimes v_{01ol'}\Big\}\nonumber\\
=&{\rm div}(v_{01o}\otimes v_{01o}),\quad {\rm div}\bar{R}=\nabla p_{01}.\nonumber
\end{align}
Using Proposition \ref{p:inverse 2}, we have
\begin{align}
{\rm div}f_{01}=\partial_t \theta_{01}+{\rm div}(v_{01}\theta_{01}).\nonumber
\end{align}
Thus, $(v_{01}, p_{01}, \theta_{01}, R_{01}, f_{01})$ solves Boussinesq-stress system (\ref{d:boussinesq reynold}).
Furthermore, we have
\begin{align}
\bar{R}+2\sum\limits_{l\in Z^3}a^2_{1l}k_1\otimes k_1=\left(\begin{array}{cc}
-2\varphi^2 & 0 \\
0     &    -2\varphi^2
\end{array}\right)+\left(\begin{array}{cc}
2\varphi^2 & 0 \\
0     &    0
\end{array}\right)=\left(\begin{array}{cc}
0 & 0 \\
0     &    -2\varphi^2
\end{array}\right),\nonumber
\end{align}
therefore
\begin{align}\label{f:form R01}
R_{01}=\left(\begin{array}{cc}
0 & 0 \\
0     &    -2\varphi^2
\end{array}\right)+\delta R_{01}.
\end{align}
We claim $\delta R_{01},~g_{01}$ can be arbitrarily small by choosing appropriate $\mu_1$ and $\lambda_1$. In fact,
\begin{align}
\|v_{01c}\|_0\leq C_1\frac{\mu_1}{\lambda_1},\quad\|\theta_{01c}\|_0
\leq C_1\frac{\mu_1}{\lambda_1},\quad\|v_{01o}\|_0\leq C_1,\quad\|\theta_{01o}\|_0\leq C_1.\nonumber
\end{align}
Here and sebsequent, $C_1$ is an absolute constant which depends on functions $b,~~ \varphi$. Therefore
\begin{align}\label{e:bound correct}
\|v_{01o}\otimes v_{01c}+v_{01c}\otimes v_{01o}+v_{01c}\otimes v_{01c}\|_0\leq C_1\frac{\mu_1}{\lambda_1},\quad \|v_{01o}\theta_{01c}+v_{01c}\theta_{01o}+v_{01c}\theta_{01c}\|_0\leq C_1\frac{\mu_1}{\lambda_1}.
\end{align}
 Moreover,
by Proposition \ref{p:proposition R} and the same argument as Lemma \ref{e:oscillate estimate}, we have
 \begin{align}\label{b:bound RR}
 &\Big\|\mathcal{R}\Big(\partial_tv_{01}+{\rm div}\Big(-\sum\limits_{l\in Z^3}a^2_{1l}k_1\otimes k_1\Big(e^{2i\lambda_12^{|l|}k_1^{\perp}\cdot x}+e^{-i\lambda_12^{|l|}k_1^{\perp}\cdot x}\Big)+\sum\limits_{l,l'\in Z^3,l\neq l'}v_{01ol}\otimes v_{01ol'}\Big)
-\theta_{01}e_2\Big)\Big\|_0\nonumber\\
\leq& C_1\frac{\mu_1}{\lambda_1}.
 \end{align}
Similarly, by Proposition \ref{p:inverse 2}, we obtain
 \begin{align}\label{e:bound G}
 \Big\|\mathcal{G}\Big(\partial_t\theta_{01}+{\rm div}(v_{01o}\theta_{01o})\Big)\Big\|_0\leq C_1\frac{\mu_1}{\lambda_1}.
 \end{align}
Thus, combining (\ref{e:bound correct}), (\ref{b:bound RR}) and (\ref{e:bound G}), we arrive at
 \begin{align}
 \|f_{01}\|_0\leq C_1 \frac{\mu_1}{\lambda_1},~~\|\delta R_{01}\|_0\leq C_1 \frac{\mu_1}{\lambda_1}.\nonumber
 \end{align}
Hence $\delta R_{01},~g_{01}$ can be arbitrarily small by choosing appropriate $\mu_1,~\lambda_1$.

 Take $k_2:=(0,1)^T$, $a_{2l}(t,x):=\varphi(t,x)\alpha_l(\mu_2t,\mu_2x)$ and set
 \begin{align}\label{d:definition on second perturbation appromation}
 w_{2ol}(t,x):=&-a_{2l}(t,x)k_2\Big(ie^{i\lambda_22^{|l|}k_2^{\perp}\cdot \big(x-v_{01}(\frac{l}{\mu_2})t\big)}-ie^{-i\lambda_22^{|l|}k_1^{\perp}\cdot \big(x-v_{01}(\frac{l}{\mu_2})t\big)}\Big),\nonumber\\
 w_{2cl}(t,x):=&\nabla^{\perp}(\nabla a_{2l}(t,x)\cdot k_2^{\perp})\Big(\frac{ie^{i\lambda_22^{|l|}k_2^{\perp}\cdot \big(x-v_{01}(\frac{l}{\mu_2})t\big)}-ie^{-i\lambda_22^{|l|}k_2^{\perp}\cdot \big(x-v_{01}(\frac{l}{\mu_2})t\big)}}{\lambda_2^22^{2|l|}}\Big)\nonumber\\
&-\nabla^{\perp}a_{2l}(t,x)\Big(\frac{e^{i\lambda_22^{|l|}k_2^{\perp}\cdot \big(x-v_{01}(\frac{l}{\mu_2})t\big)}+e^{-i\lambda_22^{|l|}k_2^{\perp}\cdot x}}{\lambda_22^{|l|}}\Big)\nonumber\\
&-\nabla a_{2l}(t,x)\cdot k_2^{\perp}\Big(\frac{k_2e^{i\lambda_22^{|l|}k_2^{\perp}\cdot \big(x-v_{01}(\frac{l}{\mu_2})t\big)}+k_2e^{-i\lambda_22^{|l|}k_2^{\perp}\cdot \big(x-v_{01}(\frac{l}{\mu_2})t\big)}}{\lambda_22^{|l|}}\Big),\nonumber\\
w_{2l}:=&w_{2ol}+w_{2cl},\quad w_{2o}:=\sum\limits_{l\in Z^3}w_{2ol},\quad w_{2c}:=\sum\limits_{l\in Z^3}w_{2cl},\quad w_{2}:=w_{2o}+w_{2c},
 \end{align}
 where $\mu_2\ll\lambda_2$ are two positive numbers which will be chosen quite large, depending on appropriate norms of $v_{01}, \theta_{01}$.
Then, a straightforward computation gives
 \begin{align}
w_{2}:=&\sum\limits_{l\in Z^3}\nabla^{\perp}\hbox{div}\Big(a_{2l}\Big(\frac{ik_2^{\perp}e^{i\lambda_22^{|l|}k_2^{\perp}\cdot \big(x-v_{01}(\frac{l}{\mu_2})t\big)}-ik_2^{\perp}e^{-i\lambda_22^{|l|}k_2^{\perp}\cdot \big(x-v_{01}(\frac{l}{\mu_2})t\big)}}{\lambda_2^22^{2|l|}}\Big)\Big).\nonumber
\end{align}

Finally, we set
\begin{align}
v_{02}:=v_{01}+w_{2},\quad
\theta_{02}:=\theta_{01},&\quad
p_{02}:=p_{01}.\nonumber\\
R_{02}:=R_{01}+2\sum\limits_{l\in Z^3}a^2_{2l}k_2\otimes k_2+\delta R_{02},&\quad
f_{02}:=f_{01}+\delta f_{02},\nonumber
\end{align}
where
\begin{align}\label{r:representation of error term}
\delta R_{02}=&w_{2o}\otimes w_{2c}+w_{2c}\otimes w_{2o}+w_{2c}\otimes w_{2c}+w_{2c}\otimes v_{01}+v_{01}\otimes w_{2c}+\mathcal{R}\Big(\partial_tw_{2}+{\rm div}\Big(w_{2o}\otimes v_{01}\Big(\frac{l}{\mu_2}\Big)\nonumber\\
&+v_{01}\Big(\frac{l}{\mu_2}\Big)\otimes w_{2o} \Big)\Big)
+{\rm div}\Big(-\sum\limits_{l\in Z^3}a^2_{2l}k_2\otimes k_2\Big(e^{2i\lambda_22^{|l|}k_2^{\perp}\cdot x}+e^{-i\lambda_22^{|l|}k_2^{\perp}\cdot x}\Big)\nonumber\\
&+\sum\limits_{l,l'\in Z^3,l\neq l'}w_{2ol}\otimes w_{2ol'}\Big)
+w_{2o}\otimes \Big(v_{01}-v_{01}\Big(\frac{l}{\mu_2}\Big)\Big)+\Big(v_{01}-v_{01}\Big(\frac{l}{\mu_2}\Big)\Big)\otimes w_{2o},\nonumber\\
\delta f_{02}=&\mathcal{G}(w_{2}\cdot\nabla \theta_{01}).
\end{align}
Obviously, ${\rm div}v_{02}=0$ and $\partial_tw_{2}\in\Xi$,
thus by Proposition \ref{p:proposition R} and Proposition \ref{p:inverse 2}, $R_{02}, f_{02}$ are well-defined and $(v_{02}, p_{02}, \theta_{02}, R_{02}, f_{02})\in C^\infty_c(Q_r)$ solves Boussinesq-stress system (\ref{d:boussinesq reynold}).  In fact,
\begin{align}
{\rm div}R_{02}=&{\rm div}R_{01}+{\rm div}(w_{2o}\otimes w_{2o}+w_{2o}\otimes w_{2c}+w_{2c}\otimes w_{2o}+w_{2c}\otimes w_{2c}+w_{2o}\otimes v_{01}\nonumber\\
&+v_{01}\otimes w_{2o}+w_{2c}\otimes v_{01}+v_{01}\otimes w_{2c})+\partial_tw_{2}\nonumber\\
=&\partial_tv_{02}+{\rm div}(v_{02}\otimes v_{02})+\nabla p_{02}-\theta_{02}e_2,\nonumber
\end{align}
where we use
\begin{align}
&{\rm div}\Big\{-\sum\limits_{l\in Z^3}a^2_{2l}k_2\otimes k_2\Big(e^{2i\lambda_22^{|l|}k_2^{\perp}\cdot x}+e^{-i\lambda_22^{|l|}k_2^{\perp}\cdot x}\Big)+2\sum\limits_{l\in Z^3}a^2_{2l}k_2\otimes k_2+\sum\limits_{l,l'\in Z^3,l\neq l'}w_{2ol}\otimes w_{2ol'}\Big\}\nonumber\\
=&{\rm div}(w_{2o}\otimes w_{2o}).\nonumber
\end{align}
And
\begin{align}
{\rm div}f_{02}={\rm div}f_{01}+{\rm div}(w_{2}\theta_{01})=\partial_t \theta_{01}+{\rm div}(v_{01}\theta_{01}+w_{2}\theta_{01})=\partial_t \theta_{02}+{\rm div}(v_{02}\theta_{02}).\nonumber
\end{align}
We claim $\delta R_{02},~\delta g_{02}$ can be arbitrarily small by choosing appropriate $\mu_2,$ and $\lambda_2$. In fact,
\begin{align}
\|w_{2c}\|_0\leq C_2\frac{\mu_2}{\lambda_2},~~\|w_{2o}\|_0\leq C_2.\nonumber
\end{align}
Here and subsequent, $C_2$ is an constant which depends on appropriate norms of $v_{01}, \theta_{01}$. Therefore
\begin{align}\label{e:bound correct 2}
\|w_{2o}\otimes w_{2c}+w_{2c}\otimes w_{2o}+w_{2c}\otimes w_{2c}+w_{2c}\otimes v_{01}+v_{01}\otimes w_{2c}\|_0\leq C_2\frac{\mu_2}{\lambda_2}.
\end{align}
By the same argument as lemma \ref{e:oscillate estimate}, Lemma \ref{e:trans estimate}, we obtain
 \begin{align}
 &\Big\|\mathcal{R}\Big\{{\rm div}\Big(-\sum\limits_{l\in Z^3}a^2_{2l}k_2\otimes k_2\Big(e^{2i\lambda_22^{|l|}k_2^{\perp}\cdot x}+e^{-i\lambda_22^{|l|}k_2^{\perp}\cdot x}\Big)+\sum\limits_{l,l'\in Z^3,l\neq l'}w_{2ol}\otimes w_{2ol'}\Big)\Big\}
\Big\|_0\leq C_2\frac{\mu_2}{\lambda_2},\nonumber\\
&\Big\|\mathcal{R}\Big(\partial_tw_{2o}+v_{01}\Big(\frac{l}{\mu_1}\Big)\cdot\nabla w_{2o}\Big)
\Big\|_0\leq C_2\frac{\mu_2}{\lambda_2},\quad
\|\mathcal{R}(\partial_tw_{2c})\|_0\leq C_2\frac{\mu_2}{\lambda_2}.\nonumber
 \end{align}
Moreover, $a_{2l}(t,x)\neq0$ implies $|(\mu_2t,\mu_2x)-l|\leq 1$, therefore
\begin{align}
\Big\|w_{2o}\otimes \Big(v_{01}-v_{01}\Big(\frac{l}{\mu_2}\Big)\Big)+\Big(v_{01}-v_{01}\Big(\frac{l}{\mu_2}\Big)\Big)\otimes w_{2o}\Big\|_0\leq\frac{C_2}{\mu_2}.\nonumber
\end{align}
Collecting the above estimates, we arrive at
\begin{align}
\|\delta R_{02}\|_0\leq\frac{C_2}{\mu_2}+C_2\frac{\mu_2}{\lambda_2}.
\end{align}
By Proposition \ref{p:inverse 2} and (\ref{r:representation of error term}), we have
\begin{align}
\|\delta f_{02}\|_0\leq \frac{C_2}{\lambda_2}.
\end{align}
Moreover, it's obvious that
\begin{align}
2\sum\limits_{l\in Z^3}a^2_{2l}(t,x)k_2\otimes k_2=\left(\begin{array}{cc}
0 & 0 \\
0     &    2\varphi^2(t,x)
\end{array}\right).\nonumber
\end{align}
Thus, by (\ref{f:form R01}),
\begin{align}
R_{01}+2\sum\limits_{l\in Z^3}a^2_{2l}k_2\otimes k_2=\delta R_{01}.\nonumber
\end{align}
Finally, we have
\begin{align}
R_{02}=\delta R_{01}+\delta R_{02},\quad f_{02}=f_{01}+\delta f_{02}\nonumber
\end{align}
and
\begin{align}
\|R_{02}\|_0\leq\frac{C_2}{\mu_2}+ C_2\frac{\mu_2}{\lambda_2}+C_1 \frac{\mu_1}{\lambda_1},\qquad
\|f_{02}\|_0\leq C_1 \frac{\mu_1}{\lambda_1}+\frac{C_2}{\lambda_2}.\nonumber
\end{align}
Next, we claim $\|v_{02}\|_0\geq 10M$. In fact,
\begin{align}
v_{02}=v_{01}+w_2=v_{01o}+w_{2o}+v_{01c}+w_{2c}.\nonumber
\end{align}
By (\ref{d:definition on first perturbation appromation}) and (\ref{d:definition on second perturbation appromation})
\begin{align}
v_{01o}
=2\sum\limits_{l\in Z^3}a_{1l}\Big(\sin(\lambda_12^{|l|}x_2),0\Big)^T,\quad
w_{2o}
=-2\sum\limits_{l\in Z^3}a_{2l}\Big(0,\sin\Big(\lambda_22^{|l|}(x_1-
v^1_{01}\Big(\frac{l}{\mu_2}\Big)t\Big)\Big)^T,\nonumber
\end{align}
where $v^1_{01}$ is the first component of $v_{01}$.
If we set $(t,x_1,x_2):=(0, \frac{\pi}{2\lambda_2}, 0)$ and take $1\ll\mu_2\ll\lambda_2$, then
\begin{align}
v_{01o}(t,x_1,x_2)=&0,\nonumber\\
w_{2o}(t,x_1,x_2)
=&-2\sum\limits_{l\in Z^3}\varphi\Big(0,\frac{\pi}{2\lambda_2},0\Big)\alpha_l\Big(0,\frac{\mu_2\pi}{2\lambda_2}, 0\Big)\Big(0,sin\Big(\frac{2^{|l|}\pi}{2}\Big)\Big)^T=(0,-20M)^T.\nonumber
\end{align}
Moreover, we can take $1\ll\mu_1\ll\lambda_1\ll\mu_2\ll\lambda_2$ such that
\begin{align}
\|v_{01c}\|_0+\|w_{2c}\|_0\leq M.\nonumber
\end{align}
Therefore, we conclude that
\begin{align}
\|v_{02}\|_0\geq 10M.\nonumber
\end{align}
Finally, we set $(v_0,~p_0,~\theta_0,~R_0,~f_0):=(v_{02},~p_{02},~\theta_{02},~R_{02},~f_{02})$.

In conclusion, for any $ M>0,~ r>0$, we can construct function  $(v_0,~p_0,~\theta_0,~R_0,~f_0)\in C^{\infty}_{c}(Q_{r})$ which solves Boussinesq-stress system (\ref{d:boussinesq reynold}) and satisfies the following estimates
\begin{align}
\|R_{0}\|_0\leq\frac{C_2}{\mu_2}+ C_2\frac{\mu_2}{\lambda_2}+C_1 \frac{\mu_1}{\lambda_1},\quad
\|f_{0}\|_0\leq C_1 \frac{\mu_1}{\lambda_1}+\frac{C_2}{\lambda_2},\quad
\|v_{0}\|_0\geq 10M.\nonumber
\end{align}

\subsection{Proof of Theorem 1.1}
\begin{proof}
From subsection 11.1, we know that for any $r>0$, there exists $(v_0,~p_0,~\theta_0,~R_0,~f_0)\in C^{\infty}_{c}(Q_{r})$ which solves Boussinesq-stress system (\ref{d:boussinesq reynold}) and satisfies the following estimates:
\begin{align}\label{e:initial data estimate}
\|R_{0}\|_0\leq\frac{C_2}{\mu_2}+ C_2\frac{\mu_2}{\lambda_2}+C_1 \frac{\mu_1}{\lambda_1},\quad
\|f_{0}\|_0\leq C_1 \frac{\mu_1}{\lambda_1}+\frac{C_2}{\lambda_2},\quad
\|v_{0}\|_0\geq 10M.\nonumber
\end{align}
Take $a,b \geq\frac{3}{2}$ such that $\frac{1}{a}\leq\min\{\frac{r}{2},\frac{\varepsilon^2}{16M^2}\}$ and set $\delta_n:=a^{-b^n}: n=0,1,2,\cdot\cdot\cdot$ , and we have
\beno
\delta_{n+1}=a^{-b^{n+1}}=(a^{-b^{n}})^b=(\delta^n)^b\leq \frac12 (\delta^n)^{\frac32},
\eeno
where we used $b\geq \frac32$ and $\delta^n\ll 1.$
 Then taking $\mu_1,\mu_2,\lambda_1,\lambda_2$ with $1\ll\mu_1\ll\lambda_1\ll\mu_2\ll\lambda_2$ such that
\begin{align}
\|R_{0}\|_0\leq\eta\delta_0,~~
\|f_{0}\|_0\leq\eta\delta_0.\nonumber
\end{align}
Applying Proposition \ref{p: iterative 1} iteratively, we can construct
 $$(v_{n},~p_{n},~\theta_{n},~R_{n},~f_{n})\in C_c^\infty(Q_{r+\sum_{i=0}^n\delta_i}), \quad n=1,2,\cdot\cdot\cdot$$
such that they solve system \ref{d:boussinesq reynold} and satisfy the following estimates
\begin{align}
\|R_{n}\|_0\leq&\eta\delta_n,\\
\|f_{n}\|_0\leq &\eta\delta_n,\\
\|v_{n+1}-v_{n}\|_0 \leq& M\sqrt{\delta_n},\label{e:velocity_final}\\
\|\theta_{n+1}-\theta_{n}\|_0 \leq& M\sqrt{\delta_n}\label{e:temperature_final},\\
\|p_{n+1}-p_{n}\|_0 \leq& M\delta_n\label{e:pressure_final},\\
\Lambda_{n+1}:=\max\{1, \|R_{n+1}\|_{C^1_{t,x}},& \|f_{n+1}\|_{C^1_{t,x}}, \|v_{n+1}\|_{C^1_{t,x}}, \|\theta_{n+1}\|_{C^1_{t,x}}\}\nonumber\\
\leq A(\sqrt{\delta_n})^{\varepsilon^2+3\varepsilon+3}&\Big(\frac{\sqrt{\delta_n}}{\delta_{n+1}}\Big)^{(1+\varepsilon)^{2}(2+\varepsilon)+(2+\varepsilon)^2}
\Lambda_n^{(1+\varepsilon)^{3}} .
\end{align}
In particular,
\begin{align}\label{e:final pressure and termperture estimate}
\|p_n\|_{C^1_{t,x}}&\leq C_0,\qquad\|\theta_{n+1}\|_{C^1_{t,x}}\leq A
(\sqrt{\delta_n})^{2+\varepsilon}
\Big(\frac{\sqrt{\delta_n}}{\delta_{n+1}}\Big)^{4+4\varepsilon+\varepsilon^2}
\Lambda_n^{(1+\varepsilon)^2
}.
\end{align}
where $A$ depends on $r+\delta_n, \varepsilon, \|v_n\|$ linearly and $\varepsilon$. It is obvious that $\sum_{i=0}^\infty\delta_i<r$.
Thus, by (11.10)-(\ref{e:pressure_final}), we know that $(v_{n},~p_{n},~\theta_{n},~R_{n},~f_{n})$ are Cauchy sequence in $C_c(Q_{2r})$, therefore there exists
\[\begin{aligned}
    (v,p,\theta)\in C_c(Q_{2r})
\end{aligned}\]
such that
\[\begin{aligned}
    v_{n}\rightarrow v,\quad p_{n}\rightarrow p,\quad \theta_{n}\rightarrow \theta,
    \quad R_{n}\rightarrow 0,\quad f_{n}\rightarrow 0
\end{aligned}\]
in $C_c(Q_{2r})$ and in particular,
\beno
\|v_n\|_0\leq \|v_0\|+M\sum_{j=0}^{\infty}a^{-\frac12b^j}\leq\|v_0\|+M\sum_{j=0}^{\infty}a^{-\frac12(\frac32)^j}.
\eeno
Thus, $\|v_n\|_0$ and $r+\delta_n$ are both bounded uniformly, hence we can assume that the constant $A$ only depend on $\varepsilon$.
Passing into the limit in (\ref{d:boussinesq reynold}), we conclude that $(v,~p,~\theta)$ solve (\ref{e:boussinesq equation}) in the sense of distribution. Moreover, since $\|v_{0}\|_0\geq 10M$, thus
\beno
\|v_n\|_0\geq \|v_0\|-M\sum_{j=0}^{\infty}a^{-\frac12b^j}\geq 10M-M\sum_{j=0}^{\infty}a^{-\frac12(\frac32)^j}\geq M,
\eeno
 hence the solution is non-trivial.

Next, we prove that the solution $v , p, \theta$ is H\"{o}lder continuous. We claim that for a suitable choice of $a, b$, there exist a constant $c> 1$ such that $$\Lambda_n\leq a^{cb^n}.$$
We prove this claim by induction.\\
Indeed, for $n=0$, it's obvious. Assuming that we have proved $\Lambda_n\leq a^{cb^n}$, then
\begin{align}
\Lambda_{n+1}\leq& A(\sqrt{\delta_n})^{\varepsilon^2+3\varepsilon+3}\Big(\frac{\sqrt{\delta_n}}{\delta_{n+1}}\Big)^{(1+\varepsilon)^{2}(2+\varepsilon)+(2+\varepsilon)^2}
\Lambda_n^{(1+\varepsilon)^{3}}\nonumber\\
\leq& Aa^{-\frac{\varepsilon}{2}b^n}a^{cb^{n+1}}a^{\big((b-\frac{1}{2})d-\frac{\varepsilon^2
+3\varepsilon+3}{2}+\frac{\varepsilon}{2}+c(1+\varepsilon)^3-cb\big)b^n},\nonumber
\end{align}
where $d=(1+\varepsilon)^{2}(2+\varepsilon)+(2+\varepsilon)^2$.\\
Take $b=\frac{3}{2}$ and $c=\frac{2d-(\varepsilon^2+2\varepsilon+3)}{3-2(1+\varepsilon)^3}$ , we arrive at
$$\Lambda_{n+1}\leq Aa^{-\frac{\varepsilon}{2}}a^{cb^{n+1}}.$$
Then choosing $a\geq A^{\frac{2}{\varepsilon}}$, we have $\Lambda_{n+1}\leq a^{cb^{n+1}}.$  Finally, we take
$$a:=\max\Big\{A^{\frac{2}{\varepsilon}},~~ \Lambda_0,~~~ \frac{3}{2},~~~\frac{1}{\min\{\frac{r}{2},\frac{\varepsilon^2}{16M^2}\}}\Big\},$$ then $a$
satisfies all the needed conditions.

Now we consider the approximate sequence $v_n, p_n, \theta_n$. By (\ref{e:velocity_final}), we have
\begin{align}
\|v_{n+1}-v_n\|_0\leq Ma^{-\frac{1}{2}b^n}.\nonumber
\end{align}
Moreover, we have
\begin{align}
\|v_{n+1}-v_n\|_{C^1_{t,x}}\leq \Lambda_n+\Lambda_{n+1}\leq 2a^{cb^{n+1}}.\nonumber
\end{align}
Therefore, for any $\alpha\in (0,1)$
\begin{align}
\|v_{n+1}-v_n\|_{C^\alpha_{t,x}}\leq  2Ma^{\big(\alpha cb-\frac{(1-\alpha)}{2}\big)b^n}.\nonumber
\end{align}
If $\alpha<\frac{1}{1+2bc}$, then $\alpha cb-\frac{(1-\alpha)}{2}< 0$, thus $v_n$ are Cauchy sequence in $C^\alpha_{t,x}.$ Take the value of $b,c$, we kwon that $v\in C^\alpha_{t,x}$ for any $\alpha<\frac{3-2(1+\varepsilon)^3}{3-2(1+\varepsilon)^3+6d-3(\varepsilon^2+2\varepsilon+3)}$. When $\varepsilon\rightarrow 0$, we have
$\frac{3-2(1+\varepsilon)^3}{3-2(1+\varepsilon)^3+6d-3(\varepsilon^2+2\varepsilon+3)}\rightarrow \frac{1}{28}$.
By (\ref{e:pressure_final}) and (\ref{e:final pressure and termperture estimate}), we know that $p\in C^\beta_{t,x}$ for any $\beta\in (0,1)$.\\
By (\ref{e:temperature_final}) and (\ref{e:final pressure and termperture estimate}), we have
\begin{align}
\|\theta_{n+1}-\theta_n\|_0\leq Ma^{-\frac{1}{2}b^n}\nonumber
\end{align}
and
\begin{align}
\|\theta_{n+1}-\theta_n\|_{C^1_{t,x}}\leq 2a^{(3+4\varepsilon+\varepsilon^2+c(1+\varepsilon)^2)b^n}.\nonumber
\end{align}
By interpolation, for any $\gamma\in (0,1)$, we have
\begin{align}
\|\theta_{n+1}-\theta_n\|_{C^\gamma_{t,x}}\leq 2Ma^{\big(\gamma(3+4\varepsilon+\varepsilon^2+c(1+\varepsilon)^2)-\frac{1-\gamma}{2}\big)b^n}.\nonumber
\end{align}
Take $\gamma<\frac{1}{1+2(3+4\varepsilon+\varepsilon^2+c(1+\varepsilon)^2)}$, then $\theta_n$ converge in $C^\gamma$, which implies that $\theta\in C^\gamma_{t, x}$ for any $\gamma<\frac{1}{1+2(3+4\varepsilon+\varepsilon^2+c(1+\varepsilon)^2)}$. When $\varepsilon\rightarrow 0$, we have $\frac{1}{1+2(3+4\varepsilon+\varepsilon^2+c(1+\varepsilon)^2)}\rightarrow \frac{1}{25}$.\\
Thus, we complete our proof of Theorem \ref{t:main 1}.
\end{proof}

{\bf Acknowledgments.}
The research is partially supported by the Chinese NSF under grant 11471320.
We thank Tianwen Luo for very valuable discussions and deeply grateful to the referee for his/her careful reading of the manuscript and the numerous very helpful suggestions.

\end{document}